\newlength{\defbaselineskip}
\newcommand{\setlinespacing}[1]%
           {\setlength{\baselineskip}{#1 \defbaselineskip}}
\theoremstyle{plain}
\newtheorem{thm}{Theorem}[section]
\newtheorem{defn}[thm]{Definition}
\newtheorem{cor}[thm]{Corollary}
\newtheorem{lem}[thm]{Lemma}
\newtheorem{exam}[thm]{Example}
\newtheorem{rem}[thm]{Remark}
\newtheorem*{arv}{Arveson-Douglas Conjecture}
\newtheorem*{geo}{Geometric Arveson-Douglas Conjecture}
\let\sss=\scriptscriptstyle
\newcommand{\bn}{\mathbb{B}_n}
\newcommand{\cn}{\mathbb{C}^n}
\newcommand{\toe}{\mathcal{T}(L^{\infty})}
\newcommand{\ber}{L_a^{2}(\mathbb{B}_n)}
\newcommand{\pbn}{\partial\mathbb{B}_n}
\newcommand{\clb}{\overline{\mathbb{B}_n}}
\newcommand{\K}{\mathcal{K}(L_a^2(\mathbb{B}_n))}
\newcommand{\Tmus}{T_{\mu_s}}
\newcommand{\tTmus}{\tilde{T}_{\mu_s}}
\newcommand{\Ps}{\mathcal{P}(\mu_s)}
\newcommand\blfootnote[1]{%
  \begingroup
  \renewcommand\thefootnote{}\footnote{#1}%
  \addtocounter{footnote}{-1}%
  \endgroup
}
\makeatletter\@addtoreset{equation}{section} \makeatother
\begin{document}
\title {Geometric Arveson-Douglas Conjecture and Holomorphic Extension}
\author{Ronald G. Douglas, Yi Wang}

\date{}
\maketitle
\blfootnote{
2010 Mathematics Subject Classification. 47A13, 32A50, 30H20, 32D15, 47B35

Key words and phrases. Geometric Arveson-Douglas Conjecture, Complex harmonic analysis, Extension of holomorphic functions, Bergman spaces

The second author would like to thank for the hospitality of Taxes A\&M University during her visit here. She also want to thank for the financial support from the program of China Scholarships Council(CSC).
}
\begin{abstract}
In this paper we introduce techniques from complex harmonic analysis to prove a weaker version of the Geometric Arveson-Douglas Conjecture for complex analytic subsets that is smooth on the boundary of the unit ball and intersects transversally with it. In fact, we prove that the projection operator onto the corresponding quotient module is in the Toeplitz algebra $\toe$, which implies the essential normality of the quotient module. Combining some other techniques we actually obtain the $p$-essential normality for $p>2d$, where $d$ is the complex dimension of the analytic subset.  Finally, we show that our results apply for the closure of a radical polynomial ideal $I$ whose zero variety satisfies the above conditions. A key technique is defining a right inverse operator of the restriction map from the unit ball to the analytic subset generalizing the result of Beatrous's paper \cite{Beatrous}.
\end{abstract}


\section{Introduction}

A complex Hilbert space $\mathcal{H}$ is called a Hilbert module (over the polynomial ring $\mathbb{C}[z_1,\cdots,z_n]$) if for every $p\in\mathbb{C}[z_1,\cdots,z_n]$ there is a bounded linear operator $M_p$ on $\mathcal{H}$ and the map $\mathbb{C}[z_1,\cdots,z_n]\to B(\mathcal{H}),~p\to M_p$ is an algebra homomorphism. Examples include the Bergman module, Hardy module and the Drury-Arveson module over $\bn$, the unit ball of $\mathbb{C}^n$. A submodule $P\subset\mathcal{H}$ is a Hilbert subspace of $\mathcal{H}$ that is closed under the module multiplications $M_p$. Let $Q$ be the orthogonal complement of $P$ in $\mathcal{H}$. Then $Q$ is the quotient Hilbert module with the homomorphism taking $z_i$ to
the compression of $M_{z_i}$ to $Q$. A Hilbert module $\mathcal{H}$ is said to be essentially normal ($p$-essentially normal ) if the commutators $[M_{z_i}, M_{z_j}^*]$ belong to the compact operators $\mathcal{K}(\mathcal{H})$ ( Schatten $p$ class $\mathcal{S}^p$), for any $1\leq i, j\leq n$.
Hilbert modules play an important role in multivariate operator theory.
In his paper \cite{Arv Dirac}, Arveson made a conjecture, which was refined by the first author in \cite{Dou index} to the following form:
\begin{arv}
Let $\mathcal{H}$ be one of $\ber$, $H^2(\bn)$ and $H^2_n(\bn)$. Assume $I$ is a homogeneous ideal of $\mathbb{C}[z_1,\cdots,z_n]$
and $P$ is the closure of $I$ in $\mathcal{H}$. Then for all $p>\dim Z(I)$, the quotient module $Q=P^{\perp}$ is $p$-essentially normal. Here $Z(I)$ is the zero set of $I$ and $\dim Z(I)$ denotes the complex dimension of $Z(I)$.
\end{arv}
The Arveson-Douglas Conjecture has been proved under various conditions. Arveson \cite{Arv p summable} provd the case when $I$ is generated by monomials; Guo \cite{Guo} proved the case when $n=2$; Guo and Wang \cite{Guo Wang} proved the case when $I$ is a principal homogenous ideal, when the complex dimension of $Z(I)\leq1$, and for any homogenous ideal $I$ when the dimension $n\leq 3$; they \cite{Guo Wang quosi} also proved the case when $I$ is a quasi-homogeneous ideal and $n=2$; the first author and Wang \cite{Douglas Wang} proved the case for $\ber$ when $I$ is any principal ideal; in \cite{Douglas Sarkar}, the first author and Sarkar reduced the quasi-homogenous case to the homogenous ones; Shalit \cite{Shalit} proved the case when the submodule possesses the stable division property.

In many cases( cf. \cite{DYT}), especially when $I$ is radical, one can prove that $[I]$, the closure of the ideal $I$, consists of all the holomorphic functions in $\mathcal{H}$ that vanish on $V(I)$. Here
$$
V(I)=\{z\in\bn: p(z)=0, \forall p\in I\}.
$$
In these cases, we can relate the submodule $[I]$ to its zero variety. The following conjecture first appeared in \cite{Sha Ken} and is a reformulation of the Arveson-Douglas Conjecture in these special cases.
\begin{geo}
Let $M$ be a homogeneous variety in $\bn$. Let
$$
P=\{f\in\mathcal{H}: f|_M=0\}
$$
and $Q=P^{\perp}$. Then the quotient module $Q$ is $p$-essentially normal for every $p>\dim M$. Here $\mathcal{H}$ is an analytic Hilbert module(cf. \cite{Guo Chen}), such as the Bergman module $\ber$, the Hardy module $H^2(\bn)$ or the Drury-Arveson module $H^2_n(\bn)$.
\end{geo}
There is also a weaker form of the two conjectures: to replace $p$-essential normality with essential normality.
Shalit and Kennedy \cite {Sha Ken} proved the case when $M$ can be decomposed into varieties having ``good'' properties; Engli\v{s} and Eschmeier \cite{Englis} proved the case when $M$ is a homogeneous subvariety that is smooth away from the origin and the case when $M$ is a (not necessarily homogeneous) smooth submanifold that intersects $\pbn$ transversally; the first author, Tang and Yu \cite{DYT} proved the weaker form under the assumption that $M$ is a (not necessarily homogenous) complete intersection space that intersects $\pbn$ transversally and has no singular point on $\pbn$.

 In this paper we mainly consider the weaker form of the Geometric Arveson-Douglas Conjecture on a not necessarily homogeneous variety. The methods of this paper differ from those in \cite{DYT} and \cite{Englis}. We obtain essential normality by proving that the projection operator onto the submodule is in the Toeplitz algebra. This approach is new and allows us to analyse the conjecture using tools from complex harmonic analysis. Part of our ideas come from Su\'{a}rez's paper \cite{Suarez07} and the first author, Tang and Yu's paper \cite{DYT}.

 In section \ref{mainsec}, we use some ideas in Su\'{a}rez's paper \cite{Suarez07} to show that the existence of a positive measure on $M$ that defines an equivalent norm on the quotient module will imply the essential normality of the quotient module.
 \begin{thm}(Theorem \ref{main})
 If there exists a positive measure~$\mu$~on~$M$~such that the~$L_{\mu}^2$~norm and Bergman norm are equivalent on the quotient module~$Q$, i.e., $\exists C, c>0$ such that $\forall f\in Q$,
$$c\|f\|^2\leq\int_M|f(w)|^2d\mu(w)\leq C\|f\|^2,$$
then the quotient module~$Q$~is essentially normal.
 \end{thm}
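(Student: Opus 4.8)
The plan is to produce the orthogonal projection $P_Q$ onto $Q$ as an element of the Toeplitz algebra $\toe$, and then to deduce essential normality of $Q$ from two facts about $\ber$: that $\ber$ is itself essentially normal, and that each $M_{z_i}$ lies in the essential commutant of $\toe$. First, observe that the hypothesis says precisely that $\mu$ is a Carleson measure for $\ber$: since every $f\in P$ vanishes on $M$, for arbitrary $f=f_Q+f_P\in Q\oplus P=\ber$ we have $\int_M|f|^2\,d\mu=\int_M|f_Q|^2\,d\mu\le C\|f_Q\|^2\le C\|f\|^2$. Hence the sesquilinear form $(f,g)\mapsto\int_M f\bar g\,d\mu$ is bounded and is represented by a bounded positive operator $T_\mu$ on $\ber$ with $0\le T_\mu\le CI$. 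Reading off $\langle T_\mu f,f\rangle=\int_M|f|^2\,d\mu$ and using the lower bound gives $\ker T_\mu=P$, $T_\mu(Q)\subseteq Q$, and $T_\mu|_Q\ge cI_Q$; equivalently $T_\mu=A\oplus 0$ with respect to $\ber=Q\oplus P$, where $cI_Q\le A\le CI_Q$.

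Consequently $\sigma(T_\mu)\subseteq\{0\}\cup[c,C]$, with $0$ isolated in (or absent from) the spectrum. Choosing a continuous function $h$ on $[0,C]$ with $h\equiv 0$ near $0$ and $h\equiv 1$ on $[c,C]$, we get $h(T_\mu)=h(A)\oplus h(0)=I_Q\oplus 0=P_Q$. Since $h$ restricted to the compact set $\sigma(T_\mu)$ is a uniform limit of polynomials, $P_Q$ lies in the unital $C^*$-algebra generated by $T_\mu$. Thus, once we know $T_\mu\in\toe$, we obtain $P_Q\in\toe$, and hence also $I-P_Q=P_P\in\toe$.

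The crux is therefore to show $T_\mu\in\toe$, and this is where I expect the real work, along the lines of Su\'arez's \cite{Suarez07}. Because $\mu$ is a positive Carleson measure, the averaged densities $\phi_r(z)=\mu(D(z,r))/v(D(z,r))$ (with $D(z,r)$ a ball in the Bergman metric and $v$ the normalized volume) belong to $L^\infty(\bn)$, with $\|\phi_r\|_\infty$ comparable to the Carleson constant. The plan is to decompose $\mu$ over an $r$-lattice, compare $T_\mu$ with $T_{\phi_r}$ piece by piece, and use the off-diagonal decay of the Bergman kernel (a Schur-test estimate) to obtain $\|T_\mu-T_{\phi_r}\|\to 0$ as $r\to 0$; since each $T_{\phi_r}$ is an honest Toeplitz operator with bounded symbol, this places $T_\mu$ in the norm closure of $\{T_\phi:\phi\in L^\infty(\bn)\}\subseteq\toe$. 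The main obstacle is precisely obtaining operator-norm — not merely weak or strong — control of $T_\mu$ by genuine Toeplitz operators; this is what forces the use of complex harmonic analysis rather than a soft argument, and it is the heart of the theorem.

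Finally, granting $P_Q\in\toe$, I would conclude as follows. It is classical that $\ber$ is essentially normal, i.e.\ $[M_{z_i},M_{z_j}^*]\in\mathcal{K}$; writing $M_{z_i}^*=T_{\bar z_i}$ and using the identity $[M_{z_i},T_\phi]=-H_{\bar z_i}^*H_\phi$, where $H_\phi=(I-P)M_\phi|_{\ber}$ is the big Hankel operator and $P$ the Bergman projection, one sees that $H_{\bar z_i}$ is compact — from $[M_{z_i},M_{z_i}^*]=-H_{\bar z_i}^*H_{\bar z_i}\in\mathcal{K}$ and the polar decomposition — hence $[M_{z_i},T_\phi]$ and $[M_{z_i}^*,T_\phi]$ are compact for every $\phi\in L^\infty(\bn)$. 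Since $S\mapsto[M_{z_i},S]$ is a norm-continuous derivation and $\mathcal{K}$ is a closed ideal, $[M_{z_i},S]\in\mathcal{K}$ for all $S\in\toe$; in particular $[M_{z_i},P_Q]\in\mathcal{K}$ for every $i$, and likewise $[M_{z_i}^*,P_Q]\in\mathcal{K}$. Then, for the compressions $S_i=P_QM_{z_i}|_Q$, working modulo $\mathcal{K}$ and using $P_QM_{z_i}\equiv M_{z_i}P_Q$ together with its adjoint relation, one computes $S_iS_j^*\equiv M_{z_i}M_{z_j}^*P_Q$ and $S_j^*S_i\equiv M_{z_j}^*M_{z_i}P_Q$, so that $[S_i,S_j^*]\equiv[M_{z_i},M_{z_j}^*]P_Q\equiv 0$ in $B(Q)/\mathcal{K}(Q)$. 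Hence $Q$ is essentially normal.
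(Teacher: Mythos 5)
Your proposal is correct and follows essentially the same route as the paper: show $\mu$ is Carleson, realize $P_Q$ by continuous functional calculus of $T_\mu$ (since $T_\mu$ vanishes on $P$ and is bounded below on $Q$, so $0$ is isolated in $\sigma(T_\mu)$), invoke Su\'arez's result that $T_\mu\in\toe$, and conclude via the essential commutant of the Toeplitz algebra. The only differences are cosmetic: the paper verifies the Carleson condition by testing on normalized kernels rather than by your direct splitting $f=f_Q+f_P$, and it simply cites the facts you re-derive by hand (that $T_\mu\in\toe$ is its Lemma~\ref{measure in toeplitz} from \cite{Suarez07}, and your Hankel-operator argument is its Lemma~\ref{ess-commutant} combined with Lemma~\ref{enormal}).
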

 Equivalently, we show that the existence of an extension map which is a right inverse of the restriction map from $\bn$ to the zero variety(cf. \cite{Beatrous} and section 4.2 in \cite{DYT}) implies the essential normality of the quotient module.(See the remark in the end of section \ref{mainsec}). Moreover, as a consequence of the proof, we have the following interesting corollary:
 \begin{cor}(Corollary \ref{toe2})
 Assume the same hypotheses as in Theorem~\ref{main}, then the projection operators~$P$~and $Q$ are in the Toeplitz algevra~$\toe$.
 \end{cor}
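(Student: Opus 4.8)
\noindent\emph{Proof plan.} The idea is to produce both $P$ and $Q$ inside the $C^*$-algebra $\toe$ by continuous functional calculus, reducing the whole statement to the single assertion $T_\mu\in\toe$ — which is the analytic core of Theorem~\ref{main}. Here $T_\mu$ denotes the positive Toeplitz operator on $\ber$ determined by $\langle T_\mu f,g\rangle=\int_M f\overline g\,d\mu$.

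The first step is to extract the operator-theoretic content of the hypothesis. For any $f\in\ber$ the component $Pf$ lies in $P$ and hence vanishes on $M$, so $f=Qf$ holds $\mu$-a.e.\ on $M$ and therefore $\int_M|f|^2\,d\mu=\int_M|Qf|^2\,d\mu$ for \emph{every} $f\in\ber$. The upper bound then shows $T_\mu$ is bounded with $PT_\mu=T_\mu P=0$, and together with the lower bound it gives $cQ\le T_\mu\le CQ$ as positive operators on $\ber$. Consequently $T_\mu$ commutes with $Q$ and $\sigma(T_\mu)\subseteq\{0\}\cup[c,C]$ with $0$ an isolated point of the spectrum (we may assume $P\neq0$, otherwise $Q=I=T_1\in\toe$ and there is nothing to prove). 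Granting $T_\mu\in\toe$: a $C^*$-subalgebra of $B(\ber)$ is inverse-closed, so this spectral picture and the associated continuous functional calculus persist inside $\toe$; choosing any $g\in C([0,\infty))$ with $g\equiv0$ near $0$ and $g\equiv1$ on $[c,C]$, we get $g(T_\mu)\in C^*(I,T_\mu)\subseteq\toe$, and since $T_\mu$ acts as $0$ on $\operatorname{ran}P$ and as an operator with spectrum in $[c,C]$ on $\operatorname{ran}Q$, we have $g(T_\mu)=Q$. As $I=T_1\in\toe$, also $P=I-Q\in\toe$. (This also re-derives Theorem~\ref{main}: Toeplitz operators with symbols in $C(\clb)$ are central in $\toe/\K$, so $[M_{z_i},Q]\in\K$, and then $[QM_{z_i}Q,QM_{z_j}^{*}Q]\equiv Q[M_{z_i},M_{z_j}^{*}]\equiv0\pmod{\K}$ because $[M_{z_i},M_{z_j}^{*}]\in\K$ on $\ber$.)

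The substantive point, and the main obstacle, is thus to prove $T_\mu\in\toe$. The upper estimate forces $\mu$ to be a Carleson measure for $\ber$, i.e.\ $\mu(D(w,r))\lesssim(1-|w|^2)^{n+1}$ uniformly over Bergman balls $D(w,r)$ of a fixed hyperbolic radius $r$, and the restriction of $\mu$ to any compact subset of $\bn$ gives a compact Toeplitz operator, which lies in $\toe$. Near $M\cap\pbn$ I would cover a neighbourhood of $M\cap\pbn$ in $\clb$ by balls $D(w_k,r)$ of bounded multiplicity with a subordinate partition of unity $\{\psi_k\}$ and write $T_\mu=(\text{compact})+\sum_k T_{\psi_k\mu}$. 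Each $T_{\psi_k\mu}$ is a positive Toeplitz operator whose symbol is supported in a ball of fixed hyperbolic radius, so these operators are uniformly localized; the Carleson bound yields $\sup_k\|T_{\psi_k\mu}\|<\infty$; and, after the M\"obius change of variables sending $w_k$ to the origin, the transversality of $M$ together with reproducing-kernel estimates should provide the off-diagonal decay of the $T_{\psi_k\mu}$ needed to invoke a membership criterion for $\toe$ for such sums (in the spirit of Su\'arez \cite{Suarez07} and of \cite{DYT}). Carrying out these estimates precisely — the localization, the uniform norm bounds, and the off-diagonal decay, which is exactly where the complex harmonic analysis on $M$ and the Beatrous-type holomorphic extension enter — is the heart of the matter; everything else above is soft.
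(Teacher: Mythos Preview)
Your reduction to the single assertion $T_\mu\in\toe$ via continuous functional calculus is exactly what the paper does: once $\mu$ is Carleson, $T_\mu$ is a positive operator with $\ker T_\mu=P$ and spectrum contained in $\{0\}\cup[c,C]$, so $Q=g(T_\mu)$ for a suitable $g\in C(\mathbb{R})$, and $P=I-Q$.

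The gap is in how you propose to obtain $T_\mu\in\toe$. You treat this as ``the substantive point, and the main obstacle'' and sketch a localization argument using a hyperbolic cover near $M\cap\pbn$, off-diagonal decay, and crucially the \emph{transversality of $M$} and Beatrous-type extension. But Corollary~\ref{toe2} assumes only the hypotheses of Theorem~\ref{main}: $M$ is an arbitrary subset of $\bn$ carrying a measure $\mu$ with the stated norm equivalence. There is no transversality, no smoothness, no manifold structure available here --- those enter only in Theorem~\ref{manifold} and Corollary~\ref{toe}, which are different statements. So the approach you outline cannot be carried out under the present hypotheses.

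What you are missing is that the implication ``$\mu$ Carleson $\Rightarrow T_\mu\in\toe$'' is a \emph{general} fact requiring nothing about the support of $\mu$: it is exactly Lemma~\ref{measure in toeplitz} in the paper, quoted from Su\'arez \cite[Theorem~7.3]{Suarez07}. The complex harmonic analysis on $M$ and the holomorphic extension machinery are used in Section~\ref{manifoldsec} to \emph{construct} a measure satisfying the hypotheses of Theorem~\ref{main}, not to place $T_\mu$ in $\toe$ once such a measure is given. With Lemma~\ref{measure in toeplitz} in hand, the corollary is immediate from the proof of Theorem~\ref{main}, which is why the paper states it without further argument.
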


 In section \ref{manifoldsec}, we combine techniques from complex harmonic analysis with those from classical operator theory to prove that such a measure always exists under some conditions. More precisely, we have the following theorem:
\begin{thm}{(Theorem \ref{manifold})}
Suppose~$\tilde{M}$~is a complex analytic subset of an open neighborhood of~$\clb$~satisfying the following conditions:
\begin{itemize}
\item[(1)] $\tilde{M}$~intersects~$\pbn$~transversally.
\item[(2)] $\tilde{M}$~has no singular point on~$\pbn$.
\end{itemize}
Let~$M=\tilde{M}\cap\bn$ and let~$P=\{f\in\ber: f|_M=0\}$. Then the submodule~$P$~is essentially normal.
\end{thm}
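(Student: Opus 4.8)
The plan is to reduce to Theorem~\ref{main} by producing a positive measure $\mu$ on $M$ whose $L^2$ norm is equivalent to the Bergman norm on the quotient module $Q=P^{\perp}$; equivalently (cf.\ the remark at the end of Section~\ref{mainsec}) one builds a bounded extension operator $E$ that is a right inverse of the restriction map $R:\ber\to L^2(M,\mu)$, in the spirit of Beatrous~\cite{Beatrous}. Once such a $\mu$ is produced, Theorem~\ref{main} yields essential normality of $Q$, hence of $P$, and Corollary~\ref{toe2} even puts the projections in $\toe$.

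The natural candidate is $d\mu(w)=(1-|w|^2)^{n-d}\,dV_M(w)$, where $dV_M$ is the volume measure on $M$ induced from $\cn$ and the exponent is the codimension $n-d$; this is the weight for which the trace of $\ber$ on a transversal $d$-dimensional submanifold is the weighted Bergman space on that submanifold. Two things must then be checked. First, a Carleson-type bound $\int_M|f|^2\,d\mu\le C\|f\|^2$ for all $f\in\ber$: here one uses that analytic sets have locally finite volume, and that by transversality $1-|w|^2$ is comparable on $M$ to the distance from $w$ to $\tilde M\cap\pbn$ measured within $M$, so that $\mu$ is finite and the usual sub-mean-value estimates apply. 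This already gives the upper inequality on $Q$. Second, and this is the main point, one needs the bounded right inverse $E$.

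For $E$ I would argue locally. The piece $M\cap(\bn\setminus U)$ for a fixed neighborhood $U$ of $\pbn$ is relatively compact in $\bn$, so there $(1-|w|^2)^{n-d}$ is bounded above and below and the associated contribution changes the relevant operators only by compact perturbations. Near a point of $\tilde M\cap\pbn$, hypothesis~(2) says $\tilde M$ is a smooth complex submanifold and hypothesis~(1) allows a local holomorphic change of coordinates straightening $\tilde M$ to a coordinate plane $\{z_{d+1}=\cdots=z_n=0\}$ compatibly with $1-|z|^2$; in this model the extension of a holomorphic $L^2$ function off the slice is given by an explicit integral operator whose $L^2$ boundedness is a harmonic-analysis computation in the style of Su\'arez~\cite{Suarez07} and Section~4.2 of~\cite{DYT}. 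A finite partition of unity subordinate to a cover of $\pbn$, together with the interior piece, glues these local extensions into a global bounded operator $E$ with $R\circ E=\mathrm{id}$.

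The lower inequality on $Q$ is then formal. For $w\in M$ the reproducing kernel $K_w$ lies in $Q$, and finite linear combinations of such kernels are dense in $Q$ and holomorphic on a neighborhood of $\clb$, so $R$ is densely defined on $Q$; for such an $f$, setting $g=E(f|_M)$ gives $g|_M=f|_M$, hence $g-f\in P$, hence $\|f\|^2=\langle g,f\rangle\le\|E\|\,\|f|_M\|_{L^2(\mu)}\,\|f\|$, so $\|f\|^2\le\|E\|^2\int_M|f|^2\,d\mu$, and this passes to all of $Q$ by density. Applying Theorem~\ref{main} finishes the argument. The hard part is the construction, and above all the $L^2$ estimate, for the local extension operators: this is where the geometry forced by transversality at $\pbn$ must be turned into quantitative analytic control, and it is what occupies the bulk of Section~\ref{manifoldsec}.
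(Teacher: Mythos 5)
Your overall reduction is the right one and matches the paper's framework: produce a measure $\mu$ on $M$ for which the $L^2(\mu)$-norm is equivalent to the Bergman norm on $Q$ and invoke Theorem~\ref{main}; your ``formal'' derivation of the lower inequality from a bounded right inverse $E$ of $R$ is exactly Remark~\ref{remext}, and your candidate weight $(1-|w|^2)^{n-d}dv_d$ is the one the paper ends up with (Remark~\ref{rem123}(1); note the paper's working measure $\mu_s$ also carries point masses at the finitely many \emph{interior} singular points of $\tilde M$, which your sketch should not ignore since only boundary singularities are excluded by hypothesis). The decisive difference is how the hard inequality is obtained. You propose to build $E$ by straightening $\tilde M$ locally near $\pbn$, extending in the model, and gluing with a partition of unity. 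The paper never constructs $E$ directly: it proves $T_{\mu_s}^3\geq cT_{\mu_s}$ by showing that, for $z\in M$ near $\pbn$, the integral of $f(w)K_w(z)\,d\mu_s(w)$ over $D(z,R)\cap M$, after projecting $M$ onto its tangent space at $z$, reduces to an exact reproducing identity on affine slices (Lemma~\ref{integral on affine space}) up to errors controlled by Lemmas~\ref{s2} and~\ref{s3}; a compactness argument on the middle annulus $M_s^{s'}$ then upgrades ``bounded below on a finite-codimensional subspace'' to ``bounded below,'' and the extension operator appears only as a corollary, not as an input.

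The gap in your route is the gluing step. If the $E_i$ are local holomorphic extensions and $\{\chi_i\}$ is a partition of unity, then $\sum_i\chi_iE_i(f)$ is not holomorphic, so the $E$ you describe does not land in $\ber$. To repair this one must solve a $\bar\partial$-problem $\bar\partial u=\bar\partial\bigl(\sum_i\chi_iE_if\bigr)$ with (i) $L^2(\bn)$ bounds uniform in $f$ and (ii) $u|_M=0$; condition (ii) forces weighted estimates with a weight singular along $M$ (as in Beatrous or Ohsawa--Takegoshi type arguments), and without it you lose $R\circ E=\mathrm{id}$ and hence the entire lower bound. This $\bar\partial$-correction is precisely the hard analytic content of the extension theorems you cite, and it cannot be absorbed into ``a finite partition of unity \dots\ glues these local extensions.'' A secondary issue: a holomorphic change of coordinates flattening $\tilde M$ does not preserve $\bn$ or the weight $1-|z|^2$, so ``compatibly with $1-|z|^2$'' needs a uniform quantitative formulation near the boundary; the paper's substitute is the tangential projection $p_z$ together with the estimates $|p_z(w)-w|=O(|w'-z'|^2)$ and Lemma~\ref{s2}. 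As written, the proposal is therefore not a proof: either supply the weighted $\bar\partial$ machinery, or switch to the operator-positivity argument.
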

Applying the previous corollary to this case, we show that the corresponding projection operator is in the Toeplitz $C^*$-algebra.
\begin{cor}{(Corollary \ref{toe})}
Suppose $M$ satisfies the properties of Theorem \ref{manifold}, then the projection operator onto $P$ is in the Toeplitz algebra $\toe$.
\end{cor}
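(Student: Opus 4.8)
The plan is to obtain Corollary~\ref{toe} as a direct byproduct of the proof of Theorem~\ref{manifold} combined with Corollary~\ref{toe2}, rather than through any new argument. The point is that Theorem~\ref{manifold} will not be proved by verifying essential normality directly; instead, under the transversality of $\tilde M$ with $\pbn$ and the absence of boundary singularities, one constructs in Section~\ref{manifoldsec} a positive measure $\mu$ on $M$ for which
$$c\|f\|^2\le\int_M|f(w)|^2\,d\mu(w)\le C\|f\|^2\qquad\text{for all }f\in Q,$$
which is exactly the hypothesis of Theorem~\ref{main}. Consequently the conclusion of Theorem~\ref{main} applies, and with it the conclusion of Corollary~\ref{toe2}: the projections $P$ and $Q$ both lie in $\toe$. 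In particular the projection onto $P$ lies in $\toe$, which is the assertion.

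Thus the proof I would write has two steps. First, I would invoke the construction carried out in the proof of Theorem~\ref{manifold}: the measure $\mu$ has the form $d\mu=G\,dv_M$, where $v_M$ is the volume on the regular part of $M$ and $G$ is comparable near $\pbn$ to a suitable power of $1-|w|^2$; transversality forces $\tilde M\cap\pbn$ to be a compact smooth CR-submanifold of the sphere, which makes $\mu$ a Carleson measure for $\ber$ (hence the upper estimate), while the lower estimate comes from the right inverse of the restriction map $\ber\to\mathcal{O}(M)$ generalizing Beatrous's theorem. Second, I would feed this $\mu$ into Corollary~\ref{toe2} to conclude $Q=I-P\in\toe$; since $\toe$ is a unital $C^*$-algebra containing $T_1=I$, the projection onto $P$ is then in $\toe$ as well.

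Accordingly, the real difficulty does not lie in this corollary: it lies upstream, in producing the measure $\mu$ together with its two-sided norm estimate (the content of Theorem~\ref{manifold}), and in the argument behind Corollary~\ref{toe2}, where one must check that the approximation scheme proving Theorem~\ref{main} --- writing the projection as a limit of operators explicitly built from Toeplitz operators associated to $\mu$ --- actually places the limit inside $\toe$ rather than merely yielding essential normality. Given those two inputs, Corollary~\ref{toe} is pure bookkeeping, and that is how I would present it.
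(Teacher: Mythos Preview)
Your proposal is correct and matches the paper's approach: Corollary~\ref{toe} is stated without a separate proof because the proof of Theorem~\ref{manifold} produces a measure $\mu_s$ on $M$ verifying the hypothesis of Theorem~\ref{main}, after which Corollary~\ref{toe2} immediately gives $P,Q\in\toe$. One small correction to your narrative of the upstream argument: in the paper the lower norm estimate is obtained directly by showing $T_{\mu_s}^3\ge cT_{\mu_s}$ (via the boundedness-below of $\tilde T_{\mu_s}$), and the Beatrous-type extension operator is then a \emph{consequence} (Corollary~\ref{extension}), not an input.
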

In connection with the theory of holomorphic extension, our approach proves the existence of an extension operator from the analytic subset to the unit ball that is bounded in the $L^2$ norm. This result extends the result in \cite{Beatrous}. We conclude section \ref{manifoldsec} with a remark related to this topic, including the following corollary.
\begin{cor}{(Corollary \ref{extension})}
Suppose $\tilde{M}$ is a $d$-dimensional complex analytic subset of an open neighborhood of $\clb$ which intersects $\pbn$ transversally and has no singular point on $\pbn$. Let $M=\tilde{M}\cap\bn$, $\mu=(1-|w|^2)^{n-d}dv_d$ on $M$ and $\mathcal{P}$ be the closed subspace of $L^2(\mu)$ generated by analytic polynomials. Then $\mathcal{P}$ is exactly the range of the restriction operator
$$
R: \ber\to L^2(\mu), f\mapsto f|_M.
$$
Equivalently, there is an extension operator
$$
E: \mathcal{P}\to\ber,
$$
which is a right inverse of $R$.
\end{cor}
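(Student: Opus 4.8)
The plan is to read off both assertions from the norm equivalence established in the course of proving Theorem~\ref{manifold}: for the measure $\mu=(1-|w|^2)^{n-d}d\vd$ on $M$ (the measure for which the equivalence is verified there) one has constants $C,c>0$ with
$$
c\|f\|^2\le\int_M|f(w)|^2\,d\mu(w)\le C\|f\|^2\qquad\text{for all }f\in Q.
$$
First I would promote the upper estimate from $Q$ to all of $\ber$. Writing $f=f_P+f_Q$ with $f_P\in P$, $f_Q\in Q$, and using $f_P|_M=0$, we get $f|_M=f_Q|_M$ in $L^2(\mu)$, whence $\int_M|f|^2\,d\mu=\int_M|f_Q|^2\,d\mu\le C\|f_Q\|^2\le C\|f\|^2$. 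Thus $R$ is a well-defined bounded operator on $\ber$ with $\|R\|\le\sqrt C$. Moreover $\ker R=P$: if $f|_M=0$ $\mu$-almost everywhere then, since $\mu$ has dense support in $M$ and $f|_M$ is continuous, $f$ vanishes on all of $M$.

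For the equality $\mathcal P=\operatorname{Range}(R)$ I would argue by two inclusions. Analytic polynomials are dense in $\ber$ and $R$ is bounded, so for $f\in\ber$ choose polynomials $p_k\to f$ in $\ber$; then $R(p_k)=p_k|_M\to R(f)$ in $L^2(\mu)$, and since each $p_k|_M$ lies in $\mathcal P$ and $\mathcal P$ is closed, $R(f)\in\mathcal P$. Hence $\operatorname{Range}(R)\subseteq\mathcal P$. Conversely, the lower estimate shows $R|_Q$ is bounded below, hence has closed range; together with $R(P)=\{0\}$ this gives that $\operatorname{Range}(R)=R(Q)$ is closed in $L^2(\mu)$. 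Since $\mathcal P$ is the closed linear span of $\{p|_M:p\text{ an analytic polynomial}\}=R(\text{polynomials})\subseteq\operatorname{Range}(R)$, we obtain $\mathcal P\subseteq\overline{\operatorname{Range}(R)}=\operatorname{Range}(R)$, and combining the two inclusions, $\mathcal P=\operatorname{Range}(R)$.

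Finally, the restriction $R|_Q\colon Q\to\mathcal P$ is a bounded linear bijection, injective because $\ker R=P$ and surjective because $R(Q)=R(\ber)=\mathcal P$. Let $E\colon\mathcal P\to\ber$ be its inverse, composed with the inclusion $Q\hookrightarrow\ber$; then $RE=\mathrm{id}_{\mathcal P}$, so $E$ is the desired right inverse of $R$, and it is bounded---indeed the norm equivalence yields $\tfrac1C\|g\|_{L^2(\mu)}^2\le\|Eg\|^2\le\tfrac1c\|g\|_{L^2(\mu)}^2$ for every $g\in\mathcal P$, so no appeal to the open mapping theorem is even needed. I do not anticipate a genuine obstacle: the only points requiring care are the passage from $Q$ to $\ber$ in the boundedness and kernel statements and the identification of $\mathcal P$ with $\overline{R(\text{polynomials})}$; the substantive input, the norm equivalence for this particular $\mu$, is exactly what the proof of Theorem~\ref{manifold} provides.
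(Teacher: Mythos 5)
Your overall route is the paper's own: the two-sided estimate $c\|f\|^2\le\int_M|f|^2\,d\mu\le C\|f\|^2$ on $Q$ gives that $R$ is bounded with $\ker R=P$, that $R|_Q$ is bounded below and hence has closed range equal to $\operatorname{Range}(R)$, that this range coincides with the closure $\mathcal P$ of the restricted polynomials, and that $(R|_Q)^{-1}$ followed by the inclusion $Q\hookrightarrow\ber$ is the bounded right inverse $E$. This is precisely Remark \ref{remext} applied to the setting of Theorem \ref{manifold}, and your variant, which reads the bound on $E$ directly off the lower estimate instead of invoking the open mapping theorem, is fine.

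The one genuine gap is in your premise. The proof of Theorem \ref{manifold} does \emph{not} establish the norm equivalence for the corollary's measure $\mu=(1-|w|^2)^{n-d}d\vd$ on all of $M$; it establishes it for $\mu_s$, whose absolutely continuous part is supported only on $M_s=\{z\in M: s\le|z|<1\}$ and which carries additional point masses $(1-|z_i|^2)^{n+1}\delta_{z_i}$ at the interior singular points. Transferring the equivalence from $\mu_s$ to $\mu$ is exactly the content of Remark \ref{rem123}(1), and it is not automatic: one first observes that $T_\nu$, with $d\nu=(1-|w|^2)^{n-d}d\vd|_{M_s}$, is a finite-rank perturbation of $T_{\mu_s}$ (each $T_{\delta_{z_i}}$ has rank one), so $T_\nu|_Q$ is Fredholm of index $0$; since $T_\nu$ is injective on $Q$ it is invertible there, which is the lower bound for $\nu$; finally $\mu\ge\nu$ with $\ker T_\mu=\ker T_\nu=P$ (and $\mu$ still Carleson, its extra mass over $\nu$ being a finite measure supported on the compact set $M\cap\{|z|\le s\}$) yields the two-sided estimate for $\mu$. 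Once this bridge is supplied, the remainder of your argument goes through as written.
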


Section \ref{secpess} is an attempt towards the full Geometric Arveson-Douglas Conjecture. We combine our methods and the methods in \cite{hankel} to show:
\begin{thm}{(Theorem \ref{2d ess nor})}
Under the assumptions of Theorem \ref{manifold}, the quotient module $Q$ is $p$-essentially normal for all $p>2d$.
\end{thm}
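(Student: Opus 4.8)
The plan is to reduce the $p$-essential normality of $Q$ to Schatten-class estimates for two families of Hankel operators restricted to $Q$, and then to establish those estimates by combining the holomorphic extension operator of Corollary~\ref{extension} with Luecking-type Schatten bounds for Hankel operators on the Bergman space, in the spirit of \cite{hankel}.

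\emph{Reduction to Hankel operators.} Write $S_i=QM_{z_i}|_Q$ for the compressed coordinate multipliers on $Q$. Since $Q$ is co-invariant (so $QM_{z_i}^{*}=QM_{z_i}^{*}Q$), a routine computation gives
\[
[S_i,S_j^{*}]=Q[M_{z_i},M_{z_j}^{*}]Q+H_j^{*}H_i,\qquad H_k:=PM_{z_k}|_Q=[P,M_{z_k}].
\]
On $\ber$ one has $M_{z_k}^{*}=T_{\bar z_k}$, and the Toeplitz--Hankel calculus yields the identity $[M_{z_i},M_{z_j}^{*}]=-G_i^{*}G_j$, where $G_k=(I-P_{\mathrm{Berg}})M_{\bar z_k}|_{\ber}$ is the big Hankel operator from $\ber$ into $L^{2}(\bn)\ominus\ber$ and $P_{\mathrm{Berg}}$ is the Bergman projection. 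Hence
\[
[S_i,S_j^{*}]=H_j^{*}H_i-(G_i|_Q)^{*}(G_j|_Q),
\]
a sum of products of Hankel-type operators restricted to $Q$, and it therefore suffices to show that the two families $H_i|_Q$ and $G_i|_Q$ lie in $\mathcal S^{q}$ for every $q>2d$.

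\emph{A concrete model for $Q$ via extension.} By Theorem~\ref{manifold} and Corollary~\ref{extension}, the restriction $R\colon f\mapsto f|_M$ is bounded and bounded below on $Q$ with range $\mathcal P(\mu)\subset L^{2}(\mu)$, $\mu=(1-|w|^{2})^{n-d}\,dv_d$; thus $R|_Q\colon Q\to\mathcal P(\mu)$ is invertible and $E:=(R|_Q)^{-1}\colon\mathcal P(\mu)\to Q\subset\ber$ is a bounded right inverse of $R$ with $ER=Q$ and $P=I-ER$. For $f\in Q$ this gives
\[
H_if=(I-ER)(z_if)=z_i\,E(f|_M)-E\bigl((z_if)|_M\bigr)=(M_{z_i}E-EM_{w_i})(f|_M),
\]
so, modulo the bounded invertible factor $R$, $H_i$ is the ``commutator defect'' $M_{z_i}E-EM_{w_i}$ of the extension operator (an operator with range in $P$, measuring the failure of $E$ to intertwine the multiplications), and likewise $G_i|_Q$ is governed by $(I-P_{\mathrm{Berg}})M_{\bar z_i}E$. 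One then invokes the explicit construction of $E$ from Section~\ref{manifoldsec}: near $\pbn$, $E$ is holomorphic extension in the transverse variables cut off by a bump function, corrected by a term from the bounded $\bar\partial$-solution operator, and patched globally by a partition of unity.

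\emph{The Schatten estimates and the main obstacle.} Localized pieces of $H_i$ and $G_i$ supported in a compact subset of $\bn$ --- which includes any interior singularities of $M$ --- have smooth, rapidly decaying kernels and lie in $\mathcal S^{q}$ for every $q>0$, hence are harmless. Near a point of $M\cap\pbn$, transversality and boundary-smoothness give local holomorphic coordinates in which $M$ is a linear $d$-dimensional slice meeting $\pbn$ transversally; there $M_{z_i}E-EM_{w_i}$ and $(I-P_{\mathrm{Berg}})M_{\bar z_i}E$ become big Hankel operators whose symbols have $\bar\partial$ concentrated in a parabolic tube $\{\mathrm{dist}(z,M)\lesssim(1-|z|^{2})^{1/2}\}$ about $M$ --- equivalently, after forming $H_i^{*}H_i$ and $G_i^{*}G_i$, positive Toeplitz operators with symbols comparable to $(1-|z|^{2})\mathbf{1}_{\mathrm{tube}}$. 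Applying the Luecking-type Schatten criterion of \cite{hankel} to such symbols and carrying out a Fubini over the tube reduces everything to the finiteness of $\int_{M}(1-|w|^{2})^{\beta}\,dv_d(w)$, which holds precisely in the range forcing $H_i|_Q,\,G_i|_Q\in\mathcal S^{q}$ for $q>2d$; it is here that $\dim M=d$ replaces the ambient dimension $n$. Patching the local estimates with the partition of unity, summing Schatten norms over a Whitney-type covering of a neighborhood of $\pbn$, and substituting into the identity of the first paragraph gives $[S_i,S_j^{*}]\in\mathcal S^{p}$ for all $p>2d$, i.e.\ $Q$ is $p$-essentially normal for $p>2d$. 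The technical heart --- and the principal obstacle --- is this last step: tracking Schatten norms through the $\bar\partial$-solution operator and the changes of holomorphic coordinates, making the Whitney sum converge down to the exponent $2d$, and above all verifying that the interior singular locus of $M$ contributes nothing to the Schatten exponent, so that all Schatten-critical behaviour occurs on $\pbn$, where $M$ is smooth and the model computation applies.
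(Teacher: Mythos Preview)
Your route is genuinely different from the paper's. The paper never decomposes $[S_i,S_j^{*}]$ into Hankel pieces and never touches $\bar\partial$, partitions of unity, tubes, or Whitney coverings. Instead it exploits the bounded invertibility of $R|_Q:Q\to\mathcal P$ directly: writing $S_{z_i}=R^{-1}M_{z_i}R$ and noting $RR^{*}=\tilde T_{\mu}$, one has
\[
[S_{z_i},S_{z_j}^{*}]=R^{-1}M_{z_i}\tilde T_{\mu}M_{z_j}^{*}R^{*-1}-R^{*}M_{z_j}^{*}\tilde T_{\mu}^{-1}M_{z_i}R.
\]
The single analytic input is Lemma~\ref{hatT}: $[\tilde T_{\mu},M_{z_i}]\in\mathcal S^{p}$ for $p>2d$, proved by showing $\int_M\int_M|z-w|^{p}|K_w(z)|^{2}\,d\mu\,d\mu<\infty$ via local coordinates and a two--line interpolation argument. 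This lets one commute $\tilde T_{\mu}^{\pm1}$ past $M_{z_i}$ modulo $\mathcal S^{p}$, collapsing the display to $R^{*}[M_{z_i},M_{z_j}^{*}]R^{*-1}$, which lies in $\mathcal S^{p}$ because $\mathcal P$ is $p$-essentially normal for $p>d$ (Lemma~\ref{P is p essential normal}). The whole proof is a few lines of algebra once those two lemmas are in hand.

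Your Hankel strategy is plausible in outline (and checks out in the hyperplane model), but there is a concrete gap. You write: ``One then invokes the explicit construction of $E$ from Section~\ref{manifoldsec}: near $\pbn$, $E$ is holomorphic extension in the transverse variables cut off by a bump function, corrected by a term from the bounded $\bar\partial$-solution operator\ldots''. No such construction exists in Section~\ref{manifoldsec}. The extension operator there is purely abstract: it is $(R|_Q)^{-1}$, whose existence comes from $T_{\mu_s}$ being bounded below on $Q$, and the only integral formula offered (Remark~\ref{remext2}) is the tautological $Ef(z)=\int_M f(w)\,\overline{E^{*}K_z(w)}\,d\mu(w)$, which gives no grip on the $\bar\partial$-defect of a cut-off. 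To run your Luecking/tube estimate you would have to import a Beatrous/DYT--type extension and carry out its Schatten analysis from scratch---precisely the step you yourself flag as ``the technical heart and the principal obstacle'' and do not perform. The paper's approach sidesteps all of this: the restriction/extension pair is used only as a similarity, and every Schatten estimate happens on $\mathcal P\subset L^{2}(\mu)$, where the kernel is the explicit Bergman kernel and the computation is elementary.
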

In fact, we prove that the range space $\mathcal{P}$ is $p$-essentially normal for $p>d$. Although we didn't get the $p$($p>d$)-essential normality of $Q$, we still think it's possible to prove it using similar methods.

In section \ref{arvsec}, we relate the Geometric Arveson-Douglas Conjecture with the Arveson-Douglas Conjecture. We prove the following theorem by extending the proof in the appendix of \cite{DYT} to our case.
\begin{thm}(Theorem \ref{arv})
Suppose $I\subset\mathbb{C}[z_1,\cdots,z_n]$ is a radical polynomial ideal. If $\tilde{M}:=Z(I)$ has no singular point on $\pbn$ and intersects $\pbn$ transversally, then $[I]=\{f\in\ber: f|_M=0\}$. Here $M=\tilde{M}\cap\bn$. As a consequence, the quotient module $Q=[I]^{\perp}$ is $p$-essentially normal, $p>2d$. Moreover, the projection operator onto $Q$ is in the Toeplitz algebra $\toe$.
\end{thm}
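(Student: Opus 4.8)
The claim splits into the functional‑analytic identity $[I]=\{f\in\ber:f|_M=0\}$ and, granting it, three consequences. The consequences are immediate: $\tilde M=Z(I)$ is a complex analytic subset of $\cn$, hence of a neighbourhood of $\clb$, and by hypothesis it intersects $\pbn$ transversally with no singular point there, so it satisfies the hypotheses of Theorem~\ref{manifold}. Writing $P=\{f\in\ber:f|_M=0\}$ and $Q=P^{\perp}$, the identity gives $[I]=P$ and $[I]^{\perp}=Q$; Theorem~\ref{2d ess nor} then yields $p$‑essential normality of $Q$ for $p>2d$, and Corollary~\ref{toe} places the projection onto $P=[I]$ in $\toe$, hence also the projection onto the complementary subspace $Q$. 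So everything reduces to the identity. One inclusion is trivial: since $I$ is radical, $I=I(\tilde M)$ by the Nullstellensatz, so every $p\in I$ vanishes on $\tilde M\supseteq M$; as $\{f\in\ber:f|_M=0\}$ is norm‑closed, $[I]\subseteq\{f\in\ber:f|_M=0\}$. (We may assume no irreducible component of $\tilde M$ is disjoint from $\clb$: such a component changes neither $M$ nor, by a routine Toeplitz‑invertibility argument — multiply by polynomials from the discarded component, which are bounded below on $\clb$ — the subspace $[I]$.)

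For the reverse inclusion I would argue as in the appendix of \cite{DYT}, suitably modified. First, it suffices to approximate a given $f\in\ber$ with $f|_M=0$, in the $\ber$‑norm, by functions that are holomorphic on some neighbourhood of $\clb$ and vanish on $M$ there, because every such function lies in $[I]$. Indeed, let $g$ be holomorphic on a ball $W=(1+\epsilon)\bn\supset\clb$ with $g|_M=0$; by the identity theorem on $\tilde M$ (each component of $\tilde M\cap W$ meeting $\bn$ when $\epsilon$ is small, by transversality) $g$ vanishes on $\tilde M\cap W$. Because $I$ is radical, the coherent analytic ideal sheaf $\mathcal{I}_{\tilde M}$ is generated by $I$ as an $\mathcal{O}$‑module: at each point the stalk of $\mathcal{I}_{\tilde M}$ is the radical of $I\mathcal{O}_x$ by the analytic Nullstellensatz, and $\mathcal{O}_x$ is a flat, indeed regular, extension of the algebraic local ring, so radicality passes from $I$ to $I\mathcal{O}_x$. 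Cartan's Theorems~A and~B on the Stein domain $W$ then give $g=\sum_i h_ip_i$ with $h_i\in\mathcal{O}(W)$ and $p_i\in I$; approximating each $h_i$ uniformly on $\clb$ by its Taylor polynomials $q_i^{(k)}$ yields $\sum_i q_i^{(k)}p_i\to g$ in $\ber$ with $\sum_i q_i^{(k)}p_i\in I$, so $g\in[I]$.

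It remains to produce such approximants for an arbitrary $f\in\ber$ with $f|_M=0$. The plan is to use the dilations $f_r(z)=f(rz)$, which converge to $f$ in $\ber$ and are holomorphic on $(1/r)\bn\supset\clb$ once $r$ is near $1$. The restriction map $R:\ber\to L^2(\mu)$, with $\mu=(1-|w|^2)^{n-d}\,dv_d$ on $M$, is bounded (Section~\ref{manifoldsec}; cf.\ Theorem~\ref{main} and Corollary~\ref{extension}) and $Rf=0$, so $\|Rf_r\|_{L^2(\mu)}\le C\|f_r-f\|\to 0$. Applying to $Rf_r=f_r|_M$ the extension operator of Corollary~\ref{extension} yields $\eta_r\in\ber$ with $\eta_r|_M=f_r|_M$ and $\|\eta_r\|\le C\|Rf_r\|_{L^2(\mu)}\to 0$; by the nature of its construction (an explicit integral operator), $\eta_r$ is moreover holomorphic on a fixed neighbourhood $W$ of $\clb$. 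Then $g_r:=f_r-\eta_r$ is holomorphic on $W$ with $g_r|_M=0$, so by the previous paragraph $g_r\in[I]$, and since $\|g_r-f\|\to 0$ we conclude $f\in[I]$. This completes the proof of the identity and hence of the theorem.

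The step I expect to be the main obstacle is producing the correction $\eta_r$ with both the quantitative bound $\|\eta_r\|_{\ber}\lesssim\|f_r|_M\|_{L^2(\mu)}$ and the regularity that it be holomorphic on a neighbourhood of $\clb$, not merely an $\ber$‑function: the bound is precisely the content of the Beatrous‑type bounded extension operator constructed in Section~\ref{manifoldsec}, where transversality and smoothness of $\tilde M$ on $\pbn$ enter, and the additional regularity must be read off from the explicit form of that operator (without it $g_r$ would not be accessible to the Cartan argument). A secondary technical point is the algebraic fact that a radical polynomial ideal generates its analytic ideal sheaf, which must be verified at the possibly singular points of $\tilde M$ lying inside $\bn$; this is exactly where the radicality of $I$ is used.
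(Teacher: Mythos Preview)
Your overall architecture --- dilate, correct by an extension, then invoke Cartan and the Nullstellensatz --- matches the paper's (and the appendix of \cite{DYT}), and your handling of $[I]\subseteq P$ and of the step ``holomorphic on a neighbourhood of $\clb$ and vanishing on $M$ implies membership in $[I]$'' is fine. The gap is exactly the one you flag in your last paragraph, and it cannot be closed the way you suggest. The extension operator of Corollary~\ref{extension} is simply the inverse of $R|_Q$, so $\eta_r=E(f_r|_M)=Qf_r\in Q\subset\ber$; the projection $Q$ has no reason to preserve holomorphy beyond $\bn$, and the integral kernel in Remark~\ref{remext2} is built from $K_w(z)=(1-\langle z,w\rangle)^{-(n+1)}$, which develops singularities once $|z|>1$ and $w\in M$ approaches $\pbn$. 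So $g_r=f_r-\eta_r$ is merely an element of $\ber$, and your Cartan paragraph does not apply to it.

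The paper repairs this not by upgrading the regularity of $E$ but by moving the target: it works with the family $E_t:\mathcal P_t\to L_a^2(t\bn)$ associated to $M^t=\tilde M\cap t\bn$ for $t\in[1,t_0]$, so that the corrected function is automatically holomorphic on $t\bn\supset\clb$. The real content of Section~\ref{arvsec} --- everything before the three-line ``proof of Theorem~\ref{arv}'' --- is showing that these $E_t$ are \emph{uniformly} bounded in $t$, by tracking the constants through the proof of Theorem~\ref{manifold} and then running a finite-codimension argument to pass from a uniform lower bound on a large subspace $\mathcal L_t$ to all of $\mathcal P_t$. With that in hand one corrects $f_r$ inside $L_a^2(t\bn)$ for some $1<t<1/r$, obtains a function holomorphic on $t\bn$ vanishing on $M^t\supset M$, and proceeds exactly as you outline. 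Your skeleton is right; the missing ingredient is this uniform-in-$t$ extension bound, which is the substantive work of the section.
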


 Our result generalizes the essential normality results of those in \cite{Englis} and \cite{DYT}. Moreover, it was not shown in the previous two papers that the projection operators are in the Toeplitz algebra. This could be useful, for example, in the study of joint invertibility and joint Fredholmness of Toeplitz operators on the submodules and quotient modules.

 This new method is simple and calculable. We believe that there is more to be discovered. We hope to obtain generalizations to $p$-essential normality( $p>d$) and varieties satisfying other assumptions in the future.

\section{Preliminaries}
Let $\bn$ be the unit ball of $\cn$. The Bergman space $\ber$ is defined as all holomorphic functions on $\bn$ that is square integrable.
$$
\ber=\{f:\bn\to\mathbb{C}|~f\mbox{ is holomorphic and }\int_{\bn}|f(w)|^2dv_n(w)<\infty\}.
$$
Here $v_n$ is the normalized volume measure on $\bn$. For $g\in L^{\infty}(\bn)$, the Toeplitz operator is defined by
$$
T_g:\ber\to\ber, f\mapsto P_{\ber}(gf)
$$
where $P_{\ber}$ is the projection operator from $L^2(\bn)$ to $\ber$. The Toeplitz algebra $\toe$ is the $C^*$ subalgebra of $B(\ber)$ generated by $T_g$, $g\in L^{\infty}(\bn)$. The space $\ber$ is a Hilbert module with module multiplications $T_p, p\in \mathbb{C}[z_1,\cdots,z_n]$. By definition, a submodule of $\ber$ is a closed subspace that is invariant under $T_{z_i}, i=1,\cdots,n$.

In the rest of this paper, $M$~will always mean a subset of~$\bn$, $P$~means the submodule of~$\ber$~consisting of all functions that vanish on~$M$~and~$Q=P^{\perp}$. We also use~$P$, $Q$~to denote the corresponding projection operators to each space. Note that under this setting,
$$Q=\overline{span}\{k_z: z\in M\}.$$

In order to prove the essential normality of the submodule~$P$, we need the following lemma, which is well known.

\begin{lem}\label{enormal}
Suppose $P$ is a submodule of the Bergman module $\ber$, then the following are equivalent:
\begin{itemize}
\item[(1)] The submodule $P$ is essential normal.
\item[(2)] The quotient module $Q$ is essential normal.
\item[(3)] $[P, M_{z_i}]\in \K$, $i=1,\cdots,n$.
\item[(4)] $[Q, M_{z_i}]\in \K$, $i=1,\cdots,n$.
\end{itemize}
\end{lem}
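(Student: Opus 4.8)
The idea is to reduce all four statements to the single classical fact that $\ber$ is itself essentially normal, i.e. $[M_{z_i},M_{z_j}^*]\in\K$ for all $i,j$ (indeed $T_{z_i}$ and $T_{\bar z_j}$ are Toeplitz operators with symbols continuous on $\clb$, and commutators of such Toeplitz operators are compact), together with a routine block computation with respect to $\ber=P\oplus Q$. First I would record the elementary identities coming from invariance of $P$: since $M_{z_i}P=PM_{z_i}P$ we have $QM_{z_i}P=0$, so setting $B_i:=PM_{z_i}Q$ gives
$$[P,M_{z_i}]=PM_{z_i}Q-QM_{z_i}P=B_i,\qquad [Q,M_{z_i}]=-[P,M_{z_i}]=-B_i.$$
In particular (3) $\Leftrightarrow$ (4), and both say precisely that $B_i\in\K$ for every $i$.

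Next, relative to $\ber=P\oplus Q$ the operator $M_{z_i}$ is block upper-triangular,
$$M_{z_i}=\begin{pmatrix}S_i & B_i\\ 0 & T_i\end{pmatrix},$$
where $S_i=PM_{z_i}|_P$ is the module operator of the submodule $P$ and $T_i=QM_{z_i}|_Q$ is the module operator of the quotient module $Q$. Multiplying out $M_{z_i}M_{z_j}^*-M_{z_j}^*M_{z_i}$ in block form, the two diagonal blocks are
$$[S_i,S_j^*]+B_iB_j^*\quad\text{on } P,\qquad [T_i,T_j^*]-B_j^*B_i\quad\text{on } Q,$$
and since the whole operator $[M_{z_i},M_{z_j}^*]$ is compact, both of these diagonal blocks are compact.

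The equivalences now follow. If $B_i\in\K$ for all $i$ (that is, (3) or (4) holds), then $B_iB_j^*$ and $B_j^*B_i$ are compact, hence $[S_i,S_j^*]$ and $[T_i,T_j^*]$ are compact for all $i,j$; since an operator on $\ber$ of the form $PXP$ (resp. $QXQ$) is compact iff its restriction to $P$ (resp. $Q$) is, this says exactly that $P$ and $Q$ are essentially normal, giving (1) and (2). Conversely, if $P$ is essentially normal then $[S_i,S_i^*]\in\K$, and taking $i=j$ in the $P$-block forces $B_iB_i^*\in\K$, whence $B_i\in\K$ (the modulus $(B_iB_i^*)^{1/2}=|B_i^*|$ is then compact, so $B_i^*=U|B_i^*|$ is compact for the polar partial isometry $U$), i.e. (3) holds; symmetrically (2) $\Rightarrow$ (4) via the $Q$-block and $B_i^*B_i\in\K$. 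Combined with (3) $\Leftrightarrow$ (4), this closes the cycle through all four statements.

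I do not expect a genuine obstacle here: everything beyond the classical essential normality of $\ber$ is bookkeeping. The two points that require a little care are the identification of the module operators of $P$ and $Q$ with the diagonal blocks $S_i,T_i$ (together with the observation that compactness of such a block is the same as compactness of the associated commutator viewed on all of $\ber$), and the implication $B_iB_i^*\in\K\Rightarrow B_i\in\K$ (and its adjoint version), which is what lets one pass from essential normality of $P$ or $Q$ back to compactness of the cross term $B_i$.
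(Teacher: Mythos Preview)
Your argument is correct and is precisely the standard one. Note, however, that the paper does not actually supply a proof of this lemma: it is introduced with the phrase ``which is well known'' and left unproved. So there is nothing to compare your approach against; your block-matrix computation (upper-triangularity of $M_{z_i}$ with respect to $P\oplus Q$, reading off the diagonal blocks of the compact commutator $[M_{z_i},M_{z_j}^*]$, and the polar-decomposition step $B_iB_i^*\in\K\Rightarrow B_i\in\K$) is exactly the folklore proof the paper is implicitly invoking.
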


Next we introduce some elementary tools in complex harmonic analysis, which will be used frequently in this paper.

For~$z\in\bn$, write~$P_z$~for the orthogonal projection onto the complex line~$\mathbb{C}z$ and~$Q_z=I-P_z$. The function
$$\varphi_z(w)=\frac{z-P_z(w)-(1-|z|^2)^{1/2}Q_z(w)}{1-\langle w,z\rangle}$$
is the (unique) automorphism of~$\bn$~that satisfies~$\varphi_z\circ\varphi_z=id$~and~$\varphi_z(0)=z$. The following Lemma is in Chapter 2 of~\cite{Rudin}.

\begin{lem}\label{basic about varphi}
Suppose~$a$, $z$, $w\in\bn$, then
\begin{itemize}
\item[(1)] $$1-\langle\varphi_a(z),\varphi_a(w)\rangle=\frac{(1-\langle a,a\rangle)(1-\langle z,w\rangle)}{(1-\langle z,a\rangle)(1-\langle a,w\rangle)}.$$
\item[(2)] As a consequence of (1),
$$1-|\varphi_a(z)|^2=\frac{(1-|a|^2)(1-|z|^2)}{|1-\langle z,a\rangle|^2}.$$
\item[(3)] The Jacobian of the automorphism~$\varphi_z$~is
$$(J\varphi_z(w))=\frac{(1-|z|^2)^{n+1}}{|1-\langle w,z\rangle|^{2(n+1)}}.$$
\end{itemize}
\end{lem}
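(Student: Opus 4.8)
\medskip
\noindent\emph{Proof idea.} This is a classical fact --- it is Theorems 2.2.2 and 2.2.6 of \cite{Rudin} --- so the plan is to record the standard argument: establish (1) by a direct computation from the defining formula for $\varphi_a$, obtain (2) by specialization, and deduce (3) from the involution identity $\varphi_a\circ\varphi_a=\mathrm{id}$ together with the chain rule for Jacobians.

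\smallskip
For (1), I would set $s=(1-|a|^2)^{1/2}$, so that $\varphi_a(z)=\big(a-P_az-sQ_az\big)/(1-\langle z,a\rangle)$, and recall that $P_a$ and $Q_a=I-P_a$ are self-adjoint idempotents with $P_aa=a$ and $Q_aa=0$; in particular
$$\langle a,P_aw\rangle=\langle a,w\rangle,\qquad \langle P_az,P_aw\rangle=\frac{\langle z,a\rangle\langle a,w\rangle}{|a|^2},\qquad \langle Q_az,Q_aw\rangle=\langle z,w\rangle-\frac{\langle z,a\rangle\langle a,w\rangle}{|a|^2},$$
while the ``mixed'' terms $\langle a,Q_aw\rangle,\ \langle Q_az,a\rangle,\ \langle P_az,Q_aw\rangle,\ \langle Q_az,P_aw\rangle$ all vanish. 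Expanding $\big\langle a-P_az-sQ_az,\ a-P_aw-sQ_aw\big\rangle$ and substituting $s^2=1-|a|^2$, the numerator collapses after regrouping to $(1-\langle z,a\rangle)(1-\langle a,w\rangle)-(1-|a|^2)(1-\langle z,w\rangle)$. Dividing by $(1-\langle z,a\rangle)\overline{(1-\langle w,a\rangle)}=(1-\langle z,a\rangle)(1-\langle a,w\rangle)$ yields
$$\langle\varphi_a(z),\varphi_a(w)\rangle=1-\frac{(1-|a|^2)(1-\langle z,w\rangle)}{(1-\langle z,a\rangle)(1-\langle a,w\rangle)},$$
which is (1) since $\langle a,a\rangle=|a|^2$. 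Putting $w=z$ and noting $(1-\langle z,a\rangle)(1-\langle a,z\rangle)=|1-\langle z,a\rangle|^2$ gives (2) at once.

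\smallskip
For (3), observe that $\varphi_a$ is holomorphic in its variable, so its real Jacobian equals $J\varphi_a(w)=|\det\varphi_a'(w)|^2$, where $\varphi_a'(w)$ is the complex derivative. Differentiating $t\mapsto\varphi_a(tw)$ at $t=0$ and using $\langle w,a\rangle a=|a|^2P_aw$ gives $\varphi_a'(0)w=-(1-|a|^2)P_aw-(1-|a|^2)^{1/2}Q_aw$; hence $\varphi_a'(0)$ acts as $-(1-|a|^2)$ on the line $\mathbb{C}a$ and as $-(1-|a|^2)^{1/2}$ on $a^{\perp}$, so $J\varphi_a(0)=|\det\varphi_a'(0)|^2=(1-|a|^2)^{n+1}$. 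For a general $w\in\bn$ put $p=\varphi_a(w)$; then $\varphi_p\circ\varphi_a\circ\varphi_w$ fixes the origin, hence is a unitary map $U$, so $\varphi_a=\varphi_p\circ U\circ\varphi_w$. Since $J$ is multiplicative under composition, $|\det U|^2=1$, and $\varphi_w(w)=0$,
$$J\varphi_a(w)=J\varphi_p(0)\cdot J\varphi_w(w)=\frac{(1-|p|^2)^{n+1}}{(1-|w|^2)^{n+1}},$$
where $J\varphi_w(w)=(1-|w|^2)^{-(n+1)}$ follows by differentiating $\varphi_w\circ\varphi_w=\mathrm{id}$ at $0$. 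Substituting $1-|p|^2=(1-|a|^2)(1-|w|^2)/|1-\langle w,a\rangle|^2$ from (2) and relabeling $a$ as $z$ produces the stated formula for $J\varphi_z(w)$.

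\smallskip
There is no genuine obstacle here, the lemma being entirely standard; the only point that rewards a little care is (3), where computing $\det\varphi_a'(w)$ directly at an arbitrary point is cumbersome, so the argument instead reduces everything to the value at the origin by means of the involution property and the chain rule.
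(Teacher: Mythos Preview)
Your argument is correct and is precisely the standard computation from Rudin that the paper invokes; the paper itself gives no proof beyond the citation ``The following Lemma is in Chapter 2 of \cite{Rudin}.'' Your write-up thus supplies exactly the details the paper omits, and there is nothing to compare or correct.
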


The pseudo-hyperbolic metric on~$\bn$~is defined by
$$\rho(z,w)=|\varphi_z(w)|.$$
And the hyperbolic metric is defined by
$$\beta(z,w)=\frac{1}{2}\log\frac{1+\rho(z,w)}{1-\rho(z,w)}.$$
Thus $\rho(z,w)=\tanh\beta(z,w)$. It's well known that the two metrics are invariant under actions of~$Aut(\bn)$, the group of holomorphic automorphisms of~$\bn$. That is, given~$\psi\in Aut(\bn)$, $$\rho(z,w)=\rho(\psi(z),\psi(w)),$$
$$\beta(z,w)=\beta(\psi(z),\psi(w))$$
for all~$z$, $w\in\bn$. For~$r>0$, $z\in\bn$, write
$$D(z,r)=\{w\in\bn: \beta(w,z)< r\}=\{w\in\bn: \rho(w,z)< s_r\},$$
where~$s_r=\tanh r$. In this paper, we use~$D(z,r)$~to denote the hyperbolic ball in~$\bn$~and use~$D_d(z',r)$~to denote the hyperbolic ball in $\mathbb{B}_d$. The notation~$B(z,\delta)$~is used to denote the Euclidian ball with center~$z$~and radius~$\delta$.

\begin{lem}{\cite[2.2.7]{Rudin}}\label{description of D(z,R)}
For~$z\in\bn$, $r>0$, the hyperbolic ball~$D(z,r)$~consists of all~$w$~that satisfy:
$$\frac{|Pw-c|^2}{s_r^2\rho^2}+\frac{|Qw|^2}{s_r^2\rho}<1,$$
where~$P=P_z$, $Q=Q_z$, and
$$c=\frac{(1-s_r^2)z}{1-s_r^2|z|^2},~~~\rho=\frac{1-|z|^2}{1-s_r^2|z|^2}.$$
\end{lem}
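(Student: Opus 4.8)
The plan is to obtain the ellipsoidal description by a direct computation from the identity in Lemma~\ref{basic about varphi}(2); there is no hidden idea, only bookkeeping. By definition $w\in D(z,r)$ means $\rho(z,w)=|\varphi_z(w)|<s_r$, equivalently $1-|\varphi_z(w)|^2>1-s_r^2$. Substituting
$$1-|\varphi_z(w)|^2=\frac{(1-|z|^2)(1-|w|^2)}{|1-\langle w,z\rangle|^2}$$
and clearing the positive denominator, the membership $w\in D(z,r)$ becomes the polynomial inequality
$$(1-|z|^2)(1-|w|^2)>(1-s_r^2)\,|1-\langle w,z\rangle|^2 .$$
The degenerate case $z=0$ gives $P_z=0$, $Q_z=I$, $c=0$, $\rho=1$, and the claim reduces to $|w|<s_r$, so I may assume $z\neq 0$ below.

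Next I would insert the orthogonal splitting $w=P_zw+Q_zw$. Since $Q_zw\perp z$ one has $\langle w,z\rangle=\langle P_zw,z\rangle$ and $|w|^2=|P_zw|^2+|Q_zw|^2$; writing $a=\langle w,z\rangle\in\mathbb{C}$ and $R=|z|^2$, so that $P_zw=(a/R)z$ and $|P_zw|^2=|a|^2/R$, I would expand both sides. Collecting terms, the constant term is $s_r^2-R$, the coefficient of $|a|^2$ is $-(1-s_r^2R)/R$, and the coefficient of $\mathrm{Re}\,a$ is $2(1-s_r^2)$; since $-(1-s_r^2R)/R<0$ (as $s_r<1$, $R<1$), dividing by $(1-s_r^2R)/R$ leaves an inequality in which $a$ occurs only through $-|a|^2+2\beta\,\mathrm{Re}\,a$ with $\beta=R(1-s_r^2)/(1-s_r^2R)$. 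Completing the square, $-|a|^2+2\beta\,\mathrm{Re}\,a=\beta^2-|a-\beta|^2$.

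After this step the inequality reads
$$|a-\beta|^2+\frac{R(1-R)}{1-s_r^2R}\,|Q_zw|^2<\frac{R(s_r^2-R)}{1-s_r^2R}+\beta^2 ,$$
and the right-hand side collapses, via the algebraic identity $R(s_r^2-R)(1-s_r^2R)+R^2(1-s_r^2)^2=R\,s_r^2(1-R)^2$, to $R\,s_r^2(1-R)^2/(1-s_r^2R)^2$. Dividing through by this positive quantity and translating back finishes the proof: since $c=(\beta/R)z$ we have $P_zw-c=\frac{a-\beta}{R}\,z$, hence $|P_zw-c|^2=|a-\beta|^2/R$; and since the $\rho$ of the statement equals $(1-R)/(1-s_r^2R)$, we have $(1-s_r^2R)^2/(1-R)^2=\rho^{-2}$ and $(1-s_r^2R)/(1-R)=\rho^{-1}$. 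Making these substitutions turns the displayed inequality into exactly $\dfrac{|P_zw-c|^2}{s_r^2\rho^2}+\dfrac{|Q_zw|^2}{s_r^2\rho}<1$.

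I expect the main obstacle to be purely computational: the natural variable is the complex number $\langle w,z\rangle$ rather than a real coordinate, so the completion of squares must be done in $\mathbb{C}$, and one has to juggle several ratios built from $1-|z|^2$ and $1-s_r^2|z|^2$. The single simplification that makes the final form come out clean is the identity $R(s_r^2-R)(1-s_r^2R)+R^2(1-s_r^2)^2=R\,s_r^2(1-R)^2$; once that is in hand, everything else is routine algebra.
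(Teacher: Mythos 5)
Your computation is correct: the case $z=0$ is handled properly, the coefficients $s_r^2-R$, $-(1-s_r^2R)/R$, $2(1-s_r^2)$ and $-(1-R)$ all check out, the identity $R(s_r^2-R)(1-s_r^2R)+R^2(1-s_r^2)^2=Rs_r^2(1-R)^2$ is valid, and the final substitutions via $|P_zw-c|^2=|a-\beta|^2/R$ recover the stated ellipsoid. The paper offers no proof of its own (it cites Rudin 2.2.7), and your direct expansion from Lemma~\ref{basic about varphi}(2) is essentially the standard argument found there.
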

Thus~$D(z,r)$~is an ellipsoid with center~$c$, radius of~$s_r\rho$~in the~$z$~direction and~$s_r\sqrt{\rho}$~in the directions perpendicular to~$z$. Therefore the Lebesgue measure of~$D(z,r)$~is
$$v_n(D(z,r))=Cs_r^{2n}\rho^{n+1},$$
where~$C>0$~is a constant depending only on~$n$.
Note that when we fix~$r$, $\rho$~is comparable with~$1-|z|^2$. Hence~$v(D(z,r))$~is comparable with~$(1-|z|^2)^{n+1}$.

Suppose~$\nu$~is a positive, finite, regular, Borel measure. The operator
$$T_{\nu}f(z)=\int_{\bn}\frac{f(w)}{(1-\langle z,w\rangle)^{n+1}}d\nu(w)$$
defines an analytic function for every~$f\in H^{\infty}$. The following lemma can be seen from the proof of Lemma 2.1 in \cite{Suarez07}.

\begin{lem}\label{4equivalent}
Let~$\nu$~be a positive, finite, regular, Borel measure on~$\bn$~and~$r>0$. Then the following are equivalent.
When one of these conditions holds, $\nu$ is called a Carleson measure ( for $\ber$).
\begin{itemize}
\item[(1)]$\sup_{z\in\bn}\int_{\bn}\frac{(1-|z|^2)^{n+1}}{|1-\langle w, z\rangle|^{2(n+1)}}d\nu(w)<\infty,$
\item[(2)]$\exists C>0:\int|f|^2d\nu\leq C\int|f|^2dv~\mbox{for all}~f\in\ber,$
\item[(3)]$\sup_{z\in\bn}\frac{\nu(D(z,~r))}{v_n(D(z,~r))}<\infty,$
\item[(4)]$T_{\nu}$ extends to a bounded linear operator on~$\ber$.
\end{itemize}
\end{lem}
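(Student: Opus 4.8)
The plan is to establish the cycle of implications $(2)\Rightarrow(4)\Rightarrow(1)\Rightarrow(3)\Rightarrow(2)$, which gives all the stated equivalences. Throughout, $K_z(w)=(1-\langle w,z\rangle)^{-(n+1)}$ denotes the Bergman reproducing kernel, so that $\|K_z\|_{\ber}^2=K_z(z)=(1-|z|^2)^{-(n+1)}$, and I will use freely that every $f\in H^\infty$ lies in $L^2(\nu)$ because $\nu$ is finite. For $(2)\Rightarrow(4)$: condition $(2)$ says the inclusion $\iota\colon\ber\hookrightarrow L^2(\nu)$ is bounded, hence so is its adjoint $\iota^*\colon L^2(\nu)\to\ber$; and for $h\in L^2(\nu)$ the reproducing property gives $(\iota^*h)(z)=\langle\iota^*h,K_z\rangle_{\ber}=\langle h,K_z\rangle_{L^2(\nu)}=\int_{\bn}h(w)(1-\langle z,w\rangle)^{-(n+1)}\,d\nu(w)=T_\nu h(z)$, all integrals converging since the kernel is bounded for fixed $z$. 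Thus $T_\nu$ agrees on $H^\infty$ with the bounded operator $\iota^*$ and so extends to a bounded operator on $\ber$.

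For $(4)\Rightarrow(1)$: apply the bounded operator $T_\nu$ to $K_a\in H^\infty$, $a\in\bn$, and evaluate at $a$ using the reproducing property together with $(1-\langle w,a\rangle)(1-\langle a,w\rangle)=|1-\langle a,w\rangle|^2$, obtaining $\int_{\bn}|1-\langle a,w\rangle|^{-2(n+1)}\,d\nu(w)=(T_\nu K_a)(a)=\langle T_\nu K_a,K_a\rangle_{\ber}\le\|T_\nu\|\,(1-|a|^2)^{-(n+1)}$; multiplying by $(1-|a|^2)^{n+1}$ and taking the supremum over $a$ gives $(1)$. (One also gets $(2)\Rightarrow(1)$ directly by testing $(2)$ on the normalized kernels $k_a=K_a/\|K_a\|$.) For $(1)\Rightarrow(3)$: restrict the integral in $(1)$ to $D(z,r)$, where $|1-\langle w,z\rangle|\asymp 1-|z|^2$ by Lemma \ref{description of D(z,R)} with constants depending only on $r$; the integral is then $\gtrsim\nu(D(z,r))/(1-|z|^2)^{n+1}\asymp\nu(D(z,r))/v_n(D(z,r))$ by the volume formula recorded after Lemma \ref{description of D(z,R)}, and taking the supremum over $z$ yields $(3)$.

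The substantive step is $(3)\Rightarrow(2)$. Fix an $r$-lattice $\{a_k\}\subset\bn$: a sequence with $\bn=\bigcup_k D(a_k,r)$, the balls $D(a_k,r/4)$ pairwise disjoint, and $\sup_w\#\{k: w\in D(a_k,r)\}\le N$ for some $N=N(n,r)$; such lattices exist by a standard Vitali-type argument. For holomorphic $f\in\ber$, transporting the sub-mean-value inequality for the plurisubharmonic function $|f|^2$ by the automorphism $\varphi_w$ and using Lemma \ref{basic about varphi} gives $|f(w)|^2\lesssim v_n(D(w,r))^{-1}\int_{D(w,r)}|f|^2\,dv$, hence for $w\in D(a_k,r)$, since $D(w,r)\subset D(a_k,2r)$ and hyperbolic balls of comparable radii have comparable volume, $|f(w)|^2\lesssim v_n(D(a_k,2r))^{-1}\int_{D(a_k,2r)}|f|^2\,dv$. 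Summing over $k$, $\int_{\bn}|f|^2\,d\nu\le\sum_k\int_{D(a_k,r)}|f|^2\,d\nu\lesssim\sum_k\frac{\nu(D(a_k,r))}{v_n(D(a_k,2r))}\int_{D(a_k,2r)}|f|^2\,dv\lesssim\sum_k\int_{D(a_k,2r)}|f|^2\,dv\lesssim\int_{\bn}|f|^2\,dv$, where the penultimate step uses $(3)$ and $v_n(D(a_k,r))\asymp v_n(D(a_k,2r))$, and the last uses the bounded overlap of $\{D(a_k,2r)\}$. This is $(2)$.

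Two loose ends deserve flagging. First, $(3)$ is independent of the particular $r>0$: once any one of the four conditions holds, $\nu(D(z,r_1))\asymp\nu(D(z,r_2))$ for all $r_1,r_2>0$, which is already implicit in the cycle. Second, in $(2)\Leftrightarrow(4)$ the naive Fubini identity $\langle T_\nu f,f\rangle=\int|f|^2\,d\nu$ need not come from an absolutely convergent double integral for general $f\in\ber$ (the inner integral $\int_{\bn}|1-\langle z,w\rangle|^{-(n+1)}\,dv(z)$ grows logarithmically near $\partial\bn$), which is why I route $(2)\Rightarrow(4)$ through the adjoint $\iota^*$ and $(4)\Rightarrow(1)$ through point evaluations rather than through such an identity. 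I expect the main obstacle to be the construction and bookkeeping in $(3)\Rightarrow(2)$: producing the lattice with controlled multiplicity, the sub-mean-value estimate via $\varphi_w$, and tracking which constants depend on $r$ versus only on $n$.
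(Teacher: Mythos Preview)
Your proof is correct and complete: the cycle $(2)\Rightarrow(4)\Rightarrow(1)\Rightarrow(3)\Rightarrow(2)$ works as written, and your treatment of the delicate point---routing $(2)\Rightarrow(4)$ through the adjoint of the inclusion rather than through a Fubini identity that need not converge absolutely---is the right way to do it. One cosmetic remark: in $(2)\Rightarrow(4)$ you write that ``$T_\nu$ agrees on $H^\infty$ with the bounded operator $\iota^*$''; strictly speaking the bounded extension of $T_\nu$ to $\ber$ is $\iota^*\iota$, since $\iota^*$ has domain $L^2(\nu)$ and one first embeds $f\in H^\infty\subset\ber$ into $L^2(\nu)$ via $\iota$ before applying $\iota^*$. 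Your computation already shows this; only the sentence needs adjusting.

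As for comparison with the paper: the paper does not give a proof at all. It simply records the lemma as a known fact and refers the reader to the proof of Lemma~2.1 in Su\'arez~\cite{Suarez07}. So you have supplied what the paper omits; there is nothing further to compare.
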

Suppose~$\nu$~is a Carleson measure, by Fubini's Theorem, we have:
$$\langle Tf,g\rangle=\int_{\bn}f(w)\overline{g(w)}d\nu(w),~~~\forall f, g\in\ber.$$
The operator~$T_{\nu}$~plays an important role in this paper. As a corollary of Theorem 7.3 in~\cite{Suarez07}, we have:
\begin{lem}\label{measure in toeplitz}
Suppose~$\nu$~is a Carleson measure, then the operator~$T_{\nu}$~belongs to the Toeplitz algebra of~$L^{\infty}$~symbols~$\mathcal{T}(L^{\infty})$.
\end{lem}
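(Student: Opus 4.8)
The plan is to deduce the statement from D.~Su\'arez's characterisation of the Toeplitz algebra in \cite{Suarez07} (Theorem 7.3), so the task reduces to exhibiting the structural features of $T_\nu$ that the hypotheses of that theorem require. Since $\nu$ is a Carleson measure, Lemma \ref{4equivalent} already tells us that $T_\nu$ is a bounded positive operator on $\ber$ with $\langle T_\nu f,g\rangle=\int_{\bn}f\bar g\,d\nu$ for $f,g\in\ber$, and that its Berezin transform
$$
\widetilde{T_\nu}(z)=\langle T_\nu k_z,k_z\rangle=\int_{\bn}\frac{(1-|z|^2)^{n+1}}{|1-\langle w,z\rangle|^{2(n+1)}}\,d\nu(w)
$$
is a bounded function on $\bn$, where $k_z$ denotes the normalised reproducing kernel. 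One preliminary remark is convenient: if $U_a$ is the self-adjoint unitary weighted composition operator on $\ber$ implementing $\varphi_a$, then $U_aT_\nu U_a=T_{\nu_a}$ for a Carleson measure $\nu_a$ (a weighted push-forward of $\nu$ along $\varphi_a$), and from $\langle U_aT_\nu U_af,f\rangle=\int_{\bn}|U_af|^2\,d\nu\le C_\nu\|f\|^2$ one sees that the measures $\{\nu_a\}_{a\in\bn}$ have Carleson constants uniformly bounded by that of $\nu$; thus the whole $\mathrm{Aut}(\bn)$-orbit of $T_\nu$ stays within a fixed norm bound and a fixed Carleson class.

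With this in hand I would check the two properties on which Theorem 7.3 of \cite{Suarez07} turns, namely that $T_\nu$ lies in the ambient C*-algebra of operators that are ``localised'' in the hyperbolic geometry and that the $m$-Berezin transforms $B_m(T_\nu)$ converge to $T_\nu$ in operator norm. \emph{Localisation}: fix $r>0$, an $r$-separated $2r$-covering net $\{z_j\}\subset\bn$ and a Borel partition $\{E_j\}$ of $\bn$ with $E_j\subset D(z_j,2r)$; then $\nu=\sum_j\nu|_{E_j}$ and $T_\nu=\sum_j T_{\nu|_{E_j}}$, where each summand has norm $\lesssim\nu(D(z_j,2r))/v_n(D(z_j,r))$, uniformly bounded by the Carleson constant through condition (3) of Lemma \ref{4equivalent}; since $\nu|_{E_j}$ is carried by one hyperbolic ball, the matrix entries $\langle T_{\nu|_{E_j}}k_z,k_w\rangle$ concentrate near $z_j$ and decay off the diagonal, using $|\langle k_z,k_w\rangle|=(1-\rho(z,w)^2)^{(n+1)/2}$ from Lemma \ref{basic about varphi}(2), which is exponential decay in $\beta$. \emph{Berezin approximation}: each $B_m(T_\nu)$ is again a Toeplitz operator $T_{g_m}$ with $g_m\in L^\infty$ (hence a priori in $\toe$), obtained by averaging $\nu$ against reproducing-kernel weights, and combining the uniform Carleson bound of the first paragraph with the off-diagonal decay just described one estimates the matrix entries of $T_{g_m}-T_\nu$ against the $k_z$ to get $\|T_{g_m}-T_\nu\|\to 0$. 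Theorem 7.3 of \cite{Suarez07} then yields $T_\nu\in\toe$.

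The main obstacle is the norm convergence $B_m(T_\nu)\to T_\nu$. Approximating $\nu$ weak-$*$ by measures $f\,dv_n$ with $f\in L^\infty$, whose Toeplitz operators lie in $\toe$ by definition, only produces convergence in the strong operator topology, which does not by itself place $T_\nu$ in the \emph{norm-closed} algebra $\toe$; passing to norm convergence genuinely needs the quantitative localisation — uniform Carleson constants along the $\mathrm{Aut}(\bn)$-action together with the exponential kernel decay — and it is exactly this quantitative package that Theorem 7.3 of \cite{Suarez07} is built to exploit. Accordingly, the substance of the argument is the verification of its hypotheses for $T_\nu$ rather than any new proof of the characterisation itself.
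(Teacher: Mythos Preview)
Your approach is essentially the same as the paper's: both invoke Theorem~7.3 of \cite{Suarez07}. The paper in fact gives no argument at all --- it simply records the lemma as an immediate corollary of that theorem --- so your sketch of localisation and $m$-Berezin-transform convergence already goes well beyond what the paper supplies.
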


The following lemma is crucial to our proof of essential normality. One can find a proof in \cite[Proposition 1.4]{Mcdonald}.
\begin{lem}\label{ess-commutant}
If~$f\in C(\clb)$, then~$T_f$~essentially commutes with every operator in the Toeplitz algebra~$\toe$.
\end{lem}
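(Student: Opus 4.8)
The plan is to reduce the statement to the compactness of Hankel operators with continuous symbol and then close with a routine $C^*$-algebra argument. For $g\in L^{\infty}(\bn)$ let $H_g=(I-P_{\ber})M_g$ denote the Hankel operator from $\ber$ into $L^2(\bn)\ominus\ber$. A direct computation with the projection $P_{\ber}$ gives the semicommutator identity $T_fT_g=T_{fg}-H_{\bar f}^{*}H_g$ for all $f,g\in L^{\infty}(\bn)$ (the point being $P_{\ber}M_f(I-P_{\ber})=H_{\bar f}^{*}$); subtracting the analogous identity for $T_gT_f$ and using $fg=gf$ yields
\[
[T_f,T_g]=H_{\bar g}^{*}H_f-H_{\bar f}^{*}H_g .
\]
Since $H_g$ and $H_{\bar g}$ are bounded for every $g\in L^{\infty}(\bn)$, it follows that $[T_f,T_g]\in\K$ for every $g\in L^{\infty}(\bn)$ as soon as $H_f$ and $H_{\bar f}$ are compact. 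Thus everything reduces to showing that $H_h$ is compact for every $h\in C(\clb)$.

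To prove this I would first observe that $\mathcal{S}=\{h\in C(\clb):H_h\in\K\}$ is a norm-closed subalgebra of $C(\clb)$: closedness follows from $\|H_h\|\le\|h\|_{\infty}$, and from the identity $H_{h_1h_2}=H_{h_1}T_{h_2}+\bigl((I-P_{\ber})M_{h_1}(I-P_{\ber})\bigr)H_{h_2}$ one sees that $\mathcal{S}$ is closed under products. Clearly $\mathcal{S}$ contains the constants and every function holomorphic on $\bn$ and continuous on $\clb$ (there $H_h=0$), in particular the coordinate functions $z_1,\dots,z_n$. By Stone--Weierstrass it therefore suffices to prove that each $\bar z_j\in\mathcal{S}$, i.e.\ that $H_{\bar z_j}$ is compact. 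For this I would apply the semicommutator identity with $f=z_j$ (holomorphic, so $H_{z_j}=0$) and $g=\bar z_j$, which gives $T_{\bar z_j}T_{z_j}=T_{|z_j|^2}$ and $T_{z_j}T_{\bar z_j}=T_{|z_j|^2}-H_{\bar z_j}^{*}H_{\bar z_j}$, whence
\[
H_{\bar z_j}^{*}H_{\bar z_j}=T_{\bar z_j}T_{z_j}-T_{z_j}T_{\bar z_j}=[M_{z_j}^{*},M_{z_j}].
\]
A routine computation in the monomial orthonormal basis of $\ber$ shows that $M_{z_j}^{*}M_{z_j}$ and $M_{z_j}M_{z_j}^{*}$ are both diagonal there, so $[M_{z_j}^{*},M_{z_j}]$ is diagonal with eigenvalues tending to $0$ as the degree tends to infinity; hence it is compact, and therefore so is $H_{\bar z_j}$, its modulus squared being compact.

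Finally I would close with the $C^*$-algebra step. Set $\mathcal{C}=\{A\in B(\ber):[A,T_f]\in\K\ \text{for all}\ f\in C(\clb)\}$. This is a norm-closed $*$-subalgebra of $B(\ber)$: it is a subalgebra since $[AB,T_f]=A[B,T_f]+[A,T_f]B$, it is norm-closed because $\K$ is closed and commutators depend continuously on $A$, and it is $*$-closed because $[A^{*},T_f]=-[A,T_{\bar f}]^{*}$ with $\bar f\in C(\clb)$. By the previous paragraphs $T_g\in\mathcal{C}$ for every $g\in L^{\infty}(\bn)$, so $\mathcal{C}$ contains the $C^*$-algebra generated by these operators, namely $\toe$. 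This is precisely the assertion.

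The main obstacle is not a deep one, but two points require care. First, one must track which symbols must be continuous and which may be merely bounded: it is exactly the identity for $[T_f,T_g]$ that lets the argument run for all $L^{\infty}$ symbols $g$ while using continuity only of $f$. Second, the Stone--Weierstrass reduction is legitimate only because $\mathcal{S}$ is an \emph{algebra}, not merely a closed subspace. The one genuine computation is the compactness of $[M_{z_j}^{*},M_{z_j}]$, which is the elementary and classical fact that the Bergman module of $\bn$ is essentially normal.
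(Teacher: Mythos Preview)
Your argument is correct. The semicommutator identity, the reduction to compactness of $H_{\bar z_j}$ via Stone--Weierstrass (using that $\{h:H_h\text{ compact}\}$ is a closed subalgebra containing $1$ and $z_1,\dots,z_n$), the identification $H_{\bar z_j}^{*}H_{\bar z_j}=[M_{z_j}^{*},M_{z_j}]$, and the final $C^{*}$-closure step are all valid; the one point to be careful about is that Stone--Weierstrass applies because the closed subalgebra generated by $1,z_j,\bar z_j$ is already all of $C(\clb)$, so you do not need $\mathcal{S}$ itself to be conjugate-closed.

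By way of comparison: the paper does not prove this lemma at all but simply quotes it from the literature (McDonald, \emph{Indiana Univ.\ Math.\ J.}\ \textbf{26} (1977), Proposition~1.4). Your proof is the standard Hankel-operator route and is essentially the argument behind that cited result. What your approach buys is self-containment; what the citation buys is brevity, since in the context of this paper the lemma is only a tool and its proof is orthogonal to the new ideas.
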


We will use the following lemma frequently in calculation. One can find a proof in \cite[Proposition~1.4.10]{Rudin}.
\begin{lem}\label{1.4.10 Rudin}
For $z\in\bn$, $c$ real, $t>-1$, define
$$
I_c(z)=\int_S\frac{d\sigma(\zeta)}{|1-\langle z,\zeta\rangle|^{n+c}}
$$
and
$$
J_{c,t}(z)=\int_{\bn}\frac{(1-|w|^2)^tdv(w)}{|1-\langle z,w\rangle|^{n+1+t+c}}.
$$
When $c<0$, then $I_c$ and $J_{c,t}$ are bounded in $\bn$. When $c>0$, then
$$
I_c(z)\approx(1-|z|^2)^{-c}\approx J_{c,t}(z).
$$
Finally,
$$
I_0(z)\approx \log\frac{1}{1-|z|^2}\approx J_{0,t}(z).
$$
The notation $a(z)\approx b(z)$ means that the ratio $a(z)/b(z)$ has a positive finite limit as $|z|\to1$.
\end{lem}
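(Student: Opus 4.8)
The plan is to reduce both integrals, using the unitary invariance of $\sigma$ and $v$ together with Rudin's slice formulas, to one-complex-variable integrals over the unit disk, and then to analyze those directly. The whole argument rests on the elementary one-variable estimate that for $0\le s<1$ the circle integral $M_\beta(s)=\int_0^{2\pi}|1-se^{i\theta}|^{-\beta}\,d\theta$ is bounded on $[0,1)$ when $\beta<1$, satisfies $M_1(s)\approx\log\frac1{1-s}$, and satisfies $M_\beta(s)\approx(1-s)^{1-\beta}$ when $\beta>1$; this follows from $|1-se^{i\theta}|^2=(1-s)^2+4s\sin^2(\theta/2)$ by splitting $[0,\pi]$ at $\theta=1-s$ and comparing with $\int((1-s)^2+\theta^2)^{-\beta/2}\,d\theta$.

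Next I would carry out the two reductions. Taking $z=re_1$ with $r=|z|$, the integrand of $I_c$ depends on $\zeta$ only through $\zeta_1$, so Rudin's formula for the law of $\zeta_1$ under $\sigma$ gives, for $n\ge2$, $I_c(z)=\frac{n-1}{\pi}\int_{\mathbb D}(1-|\lambda|^2)^{n-2}|1-r\lambda|^{-(n+c)}\,dA(\lambda)$, while for $n=1$ one has directly $I_c(z)=\frac1{2\pi}M_{1+c}(r)$. For $J_{c,t}$, again with $z=re_1$, integrating out the coordinates $w'=(w_2,\dots,w_n)$ over $\{|w'|^2<1-|w_1|^2\}$ via the substitution $w'=(1-|w_1|^2)^{1/2}v$ (where $t>-1$ guarantees convergence of the fibre integral) produces a factor $(1-|w_1|^2)^{n-1+t}$, giving $J_{c,t}(z)=C_{n,t}\int_{\mathbb D}(1-|\lambda|^2)^{n-1+t}|1-r\lambda|^{-(n+1+t+c)}\,dA(\lambda)$. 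In both cases the exponent $\alpha$ of $(1-|\lambda|^2)$ satisfies $\alpha>-1$, and the exponent $\beta$ of $|1-r\lambda|^{-1}$ equals $\alpha+2+c$.

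Passing to polar coordinates $\lambda=\rho e^{i\theta}$ turns each disk integral into a constant times $\int_0^1\rho(1-\rho^2)^\alpha M_\beta(r\rho)\,d\rho$. If $\beta\le1$ — which forces $c<0$ — the model estimate bounds $M_\beta(r\rho)$ by an integrable function of $\rho$ independent of $r$ (in the borderline case $\beta=1$ one uses that $(1-\rho)^\alpha\log\frac1{1-\rho}$ is integrable), so the integral is bounded. If $\beta>1$, insert $M_\beta(r\rho)\approx(1-r\rho)^{-(1+\alpha+c)}$ and examine $\int_0^1(1-\rho)^\alpha(1-r\rho)^{-(1+\alpha+c)}\,d\rho$ as $r\to1$: since $1-r\rho\ge1-\rho$, dominated convergence shows it tends to the positive finite number $\int_0^1(1-\rho)^{-1-c}\,d\rho$ when $c<0$; it is $\approx\log\frac1{1-r}$ when $c=0$; and it is $\approx(1-r)^{-c}$ when $c>0$. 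As $1-r\approx1-r^2=1-|z|^2$, this yields the three stated regimes for $I_c$, and the identical computation for the $J_{c,t}$-integral gives the same regimes, so in particular $J_{c,t}(z)\approx I_c(z)$.

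The main obstacle is bookkeeping rather than any single estimate: one must verify in each regime that the relevant ratio tends to a genuine positive finite limit (not merely a two-sided bound) and that the standing hypotheses $t>-1$ and $n\ge1$ keep every exponent ($\alpha$, $-1-c$, and those of the intermediate Beta-type integrals) inside its range of convergence. A slicker but less self-contained alternative is to expand $(1-r\lambda)^{-\beta}$ in a binomial series, recognize $I_c(z)$ as the Gauss hypergeometric value ${}_2F_1(\tfrac{n+c}2,\tfrac{n+c}2;n;|z|^2)$, and invoke the classical asymptotics of ${}_2F_1$ at its singular point $1$.
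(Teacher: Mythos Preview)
The paper does not give its own proof of this lemma; it simply cites Rudin, \emph{Function Theory in the Unit Ball of $\mathbb{C}^n$}, Proposition~1.4.10. Your outline is correct.

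It is worth noting that Rudin's argument is precisely the ``slicker alternative'' you mention at the end: he expands $(1-\langle z,\zeta\rangle)^{-(n+c)/2}$ as a binomial series, integrates term by term on $S$ using the orthogonality relation $\int_S\langle z,\zeta\rangle^j\overline{\langle z,\zeta\rangle}^k\,d\sigma(\zeta)=\delta_{jk}\tfrac{(n-1)!\,k!}{(n-1+k)!}|z|^{2k}$, and obtains $I_c(z)$ as a power series in $|z|^2$ whose $k$-th coefficient is $\sim Ck^{c-1}$ by Stirling; the three regimes then follow from elementary facts about $\sum_k k^{c-1}x^k$ as $x\to1^-$, and $J_{c,t}$ is treated the same way. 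The payoff of Rudin's route is that the existence of a genuine \emph{limit} of the ratio comes for free from the coefficient asymptotics, whereas in your direct approach this is exactly the bookkeeping you flag in the last paragraph: for $c>0$ one must rescale via $1-\rho=(1-r)w$ and use that $M_\beta(s)(1-s)^{\beta-1}$ extends continuously to $s=1$ before dominated convergence applies, and the case $c=0$ needs a separate splitting since the limiting integrand is not in $L^1(0,\infty)$. Your route, on the other hand, is more hands-on and avoids any appeal to Stirling or Abelian theorems.
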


\section{Carleson Measure and Holomorphic Extension}\label{mainsec}
In this section we reveal some connection between the Geometric Arveson-Douglas Conjecture and the holomorphic extension theory. Our idea come from Su\'{a}rez's paper \cite{Suarez07}.The following theorem is the main theorem of this section. It's connection with holomorphic extension theory is discussed in Remark \ref{remext} in the end of this section. Also, a new result extending that of \cite{Beatrous} is stated in Remark \ref{remext2} in the next section.
\begin{thm}\label{main}
If there exists a positive, finite, regular, Borel measure~$\mu$~on~$M$~such that the~$L_{\mu}^2$~norm and Bergman norm are equivalent on~$Q$, i.e., $\exists C, c>0$ such that $\forall f\in Q$,
$$c\|f\|^2\leq\int_M|f(w)|^2d\mu(w)\leq C\|f\|^2,$$
then the submodule~$P$~is essentially normal.
\end{thm}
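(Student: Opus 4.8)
The plan is to use the measure $\mu$ to build a \emph{positive operator in the Toeplitz algebra} that is invertible on $Q$ and annihilates $P$, and then to exploit Lemma~\ref{ess-commutant}: multiplication by a continuous function essentially commutes with everything in $\toe$. First I would note that, although the upper bound $\int_M|f|^2\,d\mu\le C\|f\|^2$ is only assumed on $Q$, it extends automatically to all of $\ber$. Indeed, writing $f=f_P+f_Q$ with $f_P\in P$, $f_Q\in Q$, the function $f_P$ vanishes on $M$, so $f|_M=f_Q|_M$ $\mu$-a.e., whence $\int_M|f|^2\,d\mu=\int_M|f_Q|^2\,d\mu\le C\|f_Q\|^2\le C\|f\|^2$. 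Thus $\mu$ is a Carleson measure for $\ber$ (Lemma~\ref{4equivalent}), so $T_\mu$ is bounded and, by Lemma~\ref{measure in toeplitz}, $T_\mu\in\toe$. Since $\langle T_\mu f,g\rangle=\int_M f\bar g\,d\mu$, every $f\in P$ gives $T_\mu f=0$; as $T_\mu$ is self-adjoint this yields $T_\mu=QT_\mu Q$, i.e. $T_\mu(\ber)\subseteq Q$ and $PT_\mu=0$. The two-sided estimate $c\|f\|^2\le\langle T_\mu f,f\rangle\le C\|f\|^2$ for $f\in Q$ then shows that $T_\mu$ restricts to an invertible positive operator on $Q$; letting $B$ be the bounded operator on $\ber$ equal to $(T_\mu|_Q)^{-1}$ on $Q$ and $0$ on $P$, we get $T_\mu B=BT_\mu=Q$.

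Next I would reduce essential normality to one compactness statement. By Lemma~\ref{enormal} it suffices to show $[Q,M_{z_i}]\in\K$ for each $i$. Inserting $I=P+Q$ on both sides gives $[M_{z_i},Q]=PM_{z_i}Q-QM_{z_i}P$, and the second term vanishes because $P$ is a submodule, so $M_{z_i}P\subseteq P$. Hence it remains to prove that $PM_{z_i}Q$ is compact. Writing $Q=T_\mu B$ and commuting,
$$PM_{z_i}Q=PM_{z_i}T_\mu B=PT_\mu M_{z_i}B+P[M_{z_i},T_\mu]B=P[M_{z_i},T_\mu]B,$$
since $PT_\mu=0$. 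Finally $z_i\in C(\clb)$ and $T_\mu\in\toe$, so Lemma~\ref{ess-commutant} gives $[M_{z_i},T_\mu]=[T_{z_i},T_\mu]\in\K$; therefore $PM_{z_i}Q\in\K$, and $P$ is essentially normal.

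I expect the conceptual core, that a continuous symbol essentially commutes with $\toe$, to be handed to us by Lemma~\ref{ess-commutant}, so the only genuinely delicate points are bookkeeping ones: verifying that $T_\mu$ really splits as $QT_\mu Q$ (which relies on self-adjointness of $T_\mu$ together with $T_\mu P=0$), that $T_\mu|_Q$ is invertible with globally bounded inverse $B$, and that the hypothesis---stated only on $Q$---actually forces $\mu$ to be Carleson on all of $\ber$. This last observation, namely that functions in $P$ vanish $\mu$-a.e., is what allows one to place $T_\mu$ in the Toeplitz algebra in the first place, and I would flag it as the step most easily overlooked. Everything else is routine operator algebra.
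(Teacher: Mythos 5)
Your proof is correct, and its core is the same as the paper's: show $\mu$ is Carleson (hence $T_\mu\in\toe$ by Lemma~\ref{measure in toeplitz}), observe that $T_\mu$ annihilates $P$ and is bounded below on $Q$, and finish with Lemma~\ref{ess-commutant}. The one genuine divergence is the endgame. The paper notes that $0$ is isolated in $\sigma(T_\mu)$ with $\ker T_\mu=P$, so $Q=f(T_\mu)$ for a continuous $f$, hence $Q$ itself lies in $\toe$ and $[Q,M_{z_i}]\in\K$ follows directly from Lemma~\ref{ess-commutant}. You instead factor $Q=T_\mu B$ with $B$ the ad hoc partial inverse and use the identity $[M_{z_i},Q]=PM_{z_i}Q=P[M_{z_i},T_\mu]B$, applying Lemma~\ref{ess-commutant} to $[T_{z_i},T_\mu]$ rather than to $[T_{z_i},Q]$. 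Both are valid, but the functional-calculus route is what yields Corollary~\ref{toe2} (that the projections $P$ and $Q$ belong to $\toe$), which the paper uses later; your argument does not give this directly, although it is within reach, since your own observation that $T_\mu|_Q$ is invertible shows $0$ is isolated in $\sigma(T_\mu)$ and hence $B=g(T_\mu)\in\toe$ for a suitable continuous $g$. Your remark that the upper Carleson bound extends from $Q$ to all of $\ber$ via $f=f_P+f_Q$ is exactly the paper's point as well (there phrased for the kernels $k_z$ and condition (1) of Lemma~\ref{4equivalent} rather than condition (2)), and you are right to flag it as the step most easily overlooked.
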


\begin{proof}
First, we prove that the measure~$\mu$~is a Carleson measure. From the assumption, we have for any~$z\in\bn$,
\begin{eqnarray*}
\int_{\bn}\frac{(1-|z|^2)^{n+1}}{|1-\langle w,~z\rangle|^{2(n+1)}}d\mu(w)&=&\int_{M}|k_z(w)|^2d\mu(w)\\
&=&\int_M|Qk_z(w)|^2d\mu(w)\\
&\leq&C\|Qk_z\|^2\\
&\leq&C.
\end{eqnarray*}
The second equality is because $k_z-Qk_z\in P$, therefore $k_z(w)=Qk_z(w), \forall w\in M$.
By Lemma~\ref{4equivalent}, $\mu$ is a Carleson measure.

Next we show that the projection~$P$~is a continuous function calculous of~$T_{\mu}$~and therefore is in the Toeplitz algebra. From the equation
$$\langle T_{\mu}f,~f\rangle=\int_M|f(w)|^2d\mu(w),~~~\forall f\in\ber$$
we see that~$T_{\mu}$~is positive and vanishes on~$P$. Also, the equivalence of~$L^2(\mu)$-norm and Bergman norm on~$Q$~implies that~$T_{\mu}$~is bounded below on~$Q$. Therefore~$0$~is isolated in~$\sigma(T_{\mu})$~and~$P=\ker T_{\mu}$. Take any continuous function~$f$~on~$\mathbb{R}$~that vanishes at~$0$~and equals~$1$~on the rest of the spectrum, then~$Q=f(T_{\mu})$.

Finally, by Lemma~\ref{measure in toeplitz}, $Q$~is in the Toeplitz algebra. By Lemma~\ref{ess-commutant}~and Lemma~\ref{enormal}, the quotient module~$Q$~is essentially normal and so is the submodule $P$. This completes the proof.
\end{proof}

As a consequence of the proof of Theorem~\ref{main}, we have the following:

\begin{cor}\label{toe2}
Assume the same as Theorem~\ref{main}, then the projection operators~$P$~and $Q$ are in the Toeplitz algevra~$\toe$.
\end{cor}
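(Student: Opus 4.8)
The corollary is an immediate byproduct of the proof of Theorem~\ref{main}, and the plan is simply to isolate the two additional remarks it requires. Recall what that proof produced: the measure $\mu$ is a Carleson measure, so $T_\mu \in \toe$ by Lemma~\ref{measure in toeplitz}; the operator $T_\mu$ is positive with $\ker T_\mu = P$ (because $\langle T_\mu f, f\rangle = \int_M |f|^2\, d\mu$ vanishes precisely on $P$); and, thanks to the lower bound $c\|f\|^2 \le \int_M |f|^2\, d\mu$ on $Q = P^\perp$, the point $0$ is isolated in $\sigma(T_\mu)$, say $\sigma(T_\mu) \subseteq \{0\} \cup [c, \|T_\mu\|]$.

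First I would fix a continuous function $g \colon \mathbb{R} \to \mathbb{R}$ with $g(0) = 0$ and $g \equiv 1$ on $[c, \|T_\mu\|]$ --- for instance $g(t) = \min\{|t|/c, 1\}$. By the spectral theorem for the self-adjoint operator $T_\mu$, the operator $g(T_\mu)$ is the orthogonal projection onto the spectral subspace corresponding to $[c, \|T_\mu\|]$, i.e., onto $(\ker T_\mu)^\perp = Q$; thus $Q = g(T_\mu)$. This last identity was already noted in the proof of Theorem~\ref{main}.

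Now the two remarks. First, $\toe$ is by definition a $C^*$-subalgebra of $B(\ber)$, hence norm closed and stable under the continuous functional calculus of its self-adjoint elements; since $T_\mu = T_\mu^* \in \toe$, we get $Q = g(T_\mu) \in \toe$. Second, $I = T_1 \in \toe$, so $P = I - Q \in \toe$ as well, and the argument is complete. There is no genuine obstacle here; the only point to be careful about is that applying the functional calculus with a \emph{continuous} $g$ is legitimate only because of the spectral gap of $T_\mu$ at $0$, and that gap is exactly where the norm-equivalence hypothesis of Theorem~\ref{main} enters. Without it one would at best land in the von Neumann algebra generated by $T_\mu$, which need not be contained in $\toe$.
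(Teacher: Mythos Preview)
Your proposal is correct and follows exactly the line the paper takes: in the proof of Theorem~\ref{main} one shows $T_\mu\in\toe$ by Lemma~\ref{measure in toeplitz}, that $0$ is isolated in $\sigma(T_\mu)$ with $\ker T_\mu=P$, and hence $Q=g(T_\mu)\in\toe$ for a continuous $g$; the only extra remark needed for the corollary is $P=I-Q=T_1-Q\in\toe$, which you supply.
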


The following example is the starting point of our research.

\begin{exam}\label{hyperplane}
Suppose~$M$~is the intersection of a~$d$-dimension hyperplane and~$\bn$, where~$d<n$. We can identify~$M$~with the unit ball in~$\mathbb{C}^d$. Let~$P$~be the submodule of~$\ber$~consisting of all the functions that vanish on~$M$~and~$Q$ be the orthogonal complement of~$P$.

Let~$\rho$~be the weighted bergman measure on $M$: $d\rho=c(1-|z|^2)^{n-d}dm_d$, where~$m_d$~is the Lebesgue measure on~$M$~and~$c>0$~is chosen such that~$\rho(M)=1$~. It is well-known that the weighted Bergman space on~$M$~defined by
$$L_{a,n-d}^2(M)=\{f~\mbox{is an analytic function on}~M:~\int_M|f(w)|^2d\rho(w)<\infty\}$$
is a reproducing kernel Hilbert space on $M$ with reproducing kernels
$$\{K_z^{n-d}(w)=\frac{1}{(1-\langle w,~z\rangle)^{n+1}}:~z\in M\}.$$
Since~$Q$~is also the reproducing kernel Hilbert space with the same reproducing kernels, one can identify the two spaces(cf.\cite{Aronszajn}).
It's also clear that~$\rho$~is a Carleson measure for the~$n$-dimensional Bergman space~$\ber$. Then the equation
$$\langle T_{\rho}f,~g\rangle=\int_Mf(w)\overline{g(w)}d\rho(w),~~~\forall f,~g\in\ber$$
shows that~$T_{\rho}=Q$.
\end{exam}

\begin{rem}
 From Example~\ref{hyperplane}, one might suspect that there is always a measure on $M$ that defines the same norm on~$Q$. We show that this is not the case, even when~$M$~consists of finite points.

\begin{exam}
Suppose~$M=\{a_1,\cdots,a_m\}\subset\bn$, then there exists a positive measure~$\mu$~on~$M$~such that~$\forall f\in Q$,
    $$\|f\|^2=\int_M |f|^2d\mu$$
if and only if~$m=1$.

\begin{proof}
The ``if'' part is obvious, we prove the ``only if'' part.

Suppose~$\mu$~is supported on~$M$~such that the equation holds, then~$\mu=\sum\limits_{i=1}^mc_i\delta_i$, where~$c_i\geq0$~and~$\delta_i$~are the point masses at~$a_i$, $i=1,\cdots,m$. For any tuple~$x_1,\cdots,x_m\in\mathbb{C}$, let~$x=\sum\limits_{i=1}^mx_ik_{a_i}\in Q$. Then
$$
\|x\|^2=\sum_{i,j}x_i\overline{x_j}\langle k_{a_i}, k_{a_j}\rangle.
$$

On the other hand,
\begin{eqnarray*}
\int_M|x(w)|^2d\mu(w)&=&\sum_{i=1}^m c_i|x(a_i)|^2\\
&=&\sum_{i=1}^mc_i(1-|a_i|^2)^{-(n+1)}|\sum_{j=1}^mx_j\langle k_{a_j}, k_{a_i}\rangle|^2.
\end{eqnarray*}
Let $G$ be the $m\times m$ matrix $(\langle k_{a_i}, k_{a_j}\rangle)_{ij}$, then
$$
\|x\|^2=\begin{pmatrix}x_1& \dots & x_m\end{pmatrix}G\begin{pmatrix}\overline{x_1}\\ \vdots\\ \overline{x_m}\end{pmatrix}
$$
and
$$
\int_M|x(w)|^2d\mu(w)=\begin{pmatrix}x_1&\dots&x_m\end{pmatrix}G\begin{pmatrix}d_1&\dots&0\\ \vdots&\ddots&\vdots\\0&\dots&d_m\end{pmatrix}G^*\begin{pmatrix}\overline{x_1}\\ \vdots\\ \overline{x_m}\end{pmatrix}
$$
where $d_i=c_i(1-|a_i|^2)^{-(n+1)}$. Since $x_i$ are arbitrary, we have
$$
G=G\begin{pmatrix}d_1&\dots&0\\ \vdots&\ddots&\vdots\\0&\dots&d_m\end{pmatrix}G^*.
$$
This only holds when $G$ is diagonal, which implies $m=1$.
\end{proof}
\end{exam}
\end{rem}

\begin{rem}\label{remext}
Given a positive Carleson measure $\mu$ on $M$, the restriction map
$$
R: \ber\to L^2(\mu), f\mapsto f|_M
$$
is bounded. Assume that $\ker R=P$, then by the open mapping theorem, the hypotheses in Theorem \ref{main} is equivalent to that $R$ has closed range. If $L$ is a closed subspace of $L^2(\mu)$ containing $Range R$ and there is a bounded linear operator
$E: L\to \ber$ such that $RE=Id_{L}$, then $Range R=L$, therefore is closed. These seems redundant but will be very useful in practice. For example, given Theorem 4.3 in \cite{DYT}, one easily sees that the weighted measure in \cite{DYT} satisfies Theorem \ref{main}. So the essential normality of the corresponding submodule follows. On the other hand, suppose $\mu$ satisfies the assumption of Theorem \ref{main}, the map
$$
E: Range R\to Q\subset\ber, Rf\mapsto f\in Q
$$
is an extension operator. So this also provides a way to solve the  holomorphic extension problem related to the topic in \cite{Beatrous}.
\end{rem}

\begin{rem}
Theorem \ref{main} also relates to the reverse Carleson inequality(cf. \cite{Luecking}). One can think of the requirements in Theorem \ref{main} as a reverse Carleson inequality on the zero variety.
\end{rem}

\begin{cor}
Suppose~$M$~is an interpolating sequence, then the corresponding projection~$P$~is in the Toeplitz algebra and the submodule~$P$~is essentially normal.
\end{cor}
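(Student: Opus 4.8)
The strategy is to manufacture a point-mass measure on $M$ that verifies the hypothesis of Theorem~\ref{main}, and then invoke Theorem~\ref{main} together with Corollary~\ref{toe2}. Write $M=\{a_i\}_{i\ge1}$. Recall that, with the normalization used in this paper (so that $\|k_z\|=1$ for every $z\in\bn$), $M$ being an interpolating sequence for $\ber$ means precisely that the normalized reproducing kernels $\{k_{a_i}\}$ form a Riesz sequence in $\ber$; since $Q=\overline{\mathrm{span}}\{k_z:z\in M\}=\overline{\mathrm{span}}\{k_{a_i}\}$, this says that $\{k_{a_i}\}$ is a Riesz basis of $Q$. In particular there are constants $c,C>0$ so that the frame inequalities $c\|f\|^2\le\sum_i|\langle f,k_{a_i}\rangle|^2\le C\|f\|^2$ hold for every $f\in Q$; and, since $k_{a_i}\in Q$, for an arbitrary $f\in\ber$ we get $\sum_i|\langle f,k_{a_i}\rangle|^2=\sum_i|\langle Qf,k_{a_i}\rangle|^2\le C\|Qf\|^2\le C\|f\|^2$.

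Next I would set
$$\mu=\sum_{i\ge1}(1-|a_i|^2)^{n+1}\,\delta_{a_i},$$
with $\delta_{a_i}$ the unit point mass at $a_i$. Using the reproducing property in the form $\langle f,k_{a_i}\rangle=(1-|a_i|^2)^{(n+1)/2}f(a_i)$, for every $f\in\ber$ one has
$$\int_M|f(w)|^2\,d\mu(w)=\sum_{i\ge1}(1-|a_i|^2)^{n+1}|f(a_i)|^2=\sum_{i\ge1}\bigl|\langle f,k_{a_i}\rangle\bigr|^2.$$
Taking $f\equiv1$ (so that $\langle 1,k_{a_i}\rangle=(1-|a_i|^2)^{(n+1)/2}$ and $\|1\|^2=1$) and applying the bound from the previous paragraph gives $\mu(\bn)=\sum_i(1-|a_i|^2)^{n+1}\le C<\infty$, so $\mu$ is a positive, finite, regular Borel measure carried by $M$. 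Restricting the displayed identity to $f\in Q$ and invoking the frame inequalities yields $c\|f\|^2\le\int_M|f|^2\,d\mu\le C\|f\|^2$ for all $f\in Q$, which is exactly the hypothesis of Theorem~\ref{main}. (One can also observe that $\ker R=\{f\in\ber:f(a_i)=0\ \forall i\}=P$, since every weight $(1-|a_i|^2)^{n+1}$ is strictly positive, so this fits the picture of Remark~\ref{remext}.) Theorem~\ref{main} then gives that $P$ is essentially normal, and Corollary~\ref{toe2} gives that the projections $P$ and $Q$ lie in $\toe$, which is the assertion of the corollary.

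I do not expect a genuinely hard step here: the point is simply that an interpolating sequence comes packaged with the Riesz-basis estimates that are the content of the hypothesis of Theorem~\ref{main}. The two things to be careful about are (i) quoting the interpolation $\Leftrightarrow$ Riesz-basis equivalence in the correct form for the weighted Bergman kernels on $\bn$ (rather than the one-variable disc statement), and (ii) checking the finiteness $\sum_i(1-|a_i|^2)^{n+1}<\infty$, which is cleanest to get directly, as above, from the Bessel bound applied to the constant function $1$, instead of appealing to the Carleson-measure characterization of interpolating sequences — that route would be mildly circular, since the proof of Theorem~\ref{main} re-derives the Carleson property from the norm equivalence anyway.
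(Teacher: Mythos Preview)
Your proposal is correct and is exactly the argument the paper has in mind: the corollary is stated without proof because it follows immediately from Theorem~\ref{main} and Corollary~\ref{toe2} once one uses that an interpolating sequence makes $\{k_{a_i}\}$ a Riesz basis of $Q$, and you have supplied precisely those details (including the natural choice $\mu=\sum_i(1-|a_i|^2)^{n+1}\delta_{a_i}$ and the finiteness check).
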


\section{Geometric Arveson-Douglas Conjecture}\label{manifoldsec}

In this section, we construct a measure satisfying the hypotheses of Theorem \ref{main} for any complex analytic subset that intersects $\pbn$ transversally and has no singular point on $\pbn$.
\begin{defn}
Let $\Omega$ be a complex manifold. A set $A\subset\Omega$ is called a \emph{(complex) analytic subset} of $\Omega$ if for each point $a\in\Omega$ there are a neighborhood $U\ni a$ and functions $f_1,\cdots,f_N$ holomorphic in this neighborhood such that
$$
A\cap U=\{z\in U: f_1(z)=\cdots=f_N(z)=0\}.
$$
A point $a\in A$ is called \emph{regular} if there is a neighborhood $U\ni a$ in $\Omega$ such that $A\cap U$ is a complex submanifold of $\Omega$. A point $a\in A$ is called a \emph{singular point} of $A$ if it's not regular.
\end{defn}
\begin{defn}
Let $Y$ be a manifold and let $X, Z$ be two submanifolds of $Y$. We say that the submanifolds $X$ and $Z$ are \emph{transversal} if $\forall x\in X\cap Z$, $T_x(X)+T_x(Z)=T_x(Y)$.
\end{defn}

\begin{thm}\label{manifold}
Suppose~$\tilde{M}$~is a complex analytic subset of an open neighborhood of~$\clb$~satisfying the following conditions:
\begin{itemize}
\item[(1)] $\tilde{M}$~intersects~$\pbn$~transversally.
\item[(2)] $\tilde{M}$~has no singular point on~$\pbn$.
\end{itemize}
Let~$M=\tilde{M}\cap\bn$ and let~$P=\{f\in\ber: f|_M=0\}$. Then the submodule~$P$~is essentially normal.
\end{thm}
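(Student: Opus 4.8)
The plan is to exhibit a measure on $M$ satisfying the hypotheses of Theorem~\ref{main}; the essential normality of $P$ then follows at once. Write $d=\dim_{\mathbb C}\tilde M$ (we may assume $d<n$, otherwise $P=0$). Since $\tilde M$ has no singular point on $\pbn$ and meets $\pbn$ transversally, there is a collar neighborhood of $\pbn$ in which $M$ is a $d$-dimensional complex submanifold meeting $\pbn$ transversally. Let $v_d$ be the $2d$-dimensional volume measure on the regular part of $M$, and set
$$
\mu=(1-|w|^2)^{n-d}\,dv_d\qquad\text{on }M,
$$
the natural analogue of the weighted measure in Example~\ref{hyperplane}, to which the general situation should locally reduce. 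The two claims to establish are: (i) $\mu$ is a Carleson measure for $\ber$ (the upper bound in Theorem~\ref{main}), and (ii) the $L^2(\mu)$ norm dominates the Bergman norm on $Q$ (the lower bound).

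For (i) I would use the geometric criterion Lemma~\ref{4equivalent}(3) and estimate $\mu(D(z,r))$, splitting $\bn$ into a compact part $\{|z|\le 1-\varepsilon\}$ and the collar $\{|z|>1-\varepsilon\}$. On the compact part $M$ is a relatively compact analytic subset, hence of finite $v_d$-measure by Wirtinger's inequality, and $v_n(D(z,r))$ is bounded below there, so the Carleson quotient is controlled. In the collar, transversality lets one choose near each boundary point of $M$ holomorphic coordinates in which $M$ is a graph over a piece of $\mathbb B_d$; combining this with Lemma~\ref{description of D(z,R)} one finds that $D(z,r)\cap M$ is comparable to a $d$-dimensional hyperbolic ball, of $v_d$-measure $\lesssim(1-|z|^2)^{d+1}$, on which the weight is $\lesssim(1-|z|^2)^{n-d}$. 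Hence $\mu(D(z,r))\lesssim(1-|z|^2)^{n+1}\approx v_n(D(z,r))$ uniformly; this also shows $\mu$ is finite.

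Step (ii) is the heart of the matter and I expect it to be the main obstacle. By Remark~\ref{remext} it suffices to produce a bounded linear extension operator $E\colon\mathcal P\to\ber$, with $\mathcal P$ the closure of the analytic polynomials in $L^2(\mu)$, such that $RE=\mathrm{Id}$, i.e. $(Ef)|_M=f$. Indeed, for $f\in Q$ the function $E(f|_M)-f$ vanishes on $M$, hence lies in $P$, so $f=Q\,E(f|_M)$ and $\|f\|^2\le\|E\|^2\int_M|f|^2\,d\mu$. To build $E$ I would generalize Beatrous' construction \cite{Beatrous}: cover $\pbn\cap\tilde M$ by finitely many charts adapted to the transversal intersection, in each of which $M$ is a graph over an open subset of $\mathbb B_d$ and an explicit integral kernel — modeled on the operator $T_\rho$ of Example~\ref{hyperplane} — furnishes a local extension with good $L^2$ bounds; patch these with a smooth partition of unity; and correct the resulting $\bar\partial$-error by an $L^2$-bounded solution operator for $\bar\partial$ on the strictly pseudoconvex domain $\bn$ (H\"ormander/Kohn estimates), whose norm is controlled by the overlap geometry. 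The interior piece — extending off the relatively compact, possibly singular, analytic set $M\cap\{|z|\le 1-\varepsilon\}$ — can be handled separately and more crudely, e.g. by an $L^2$ extension theorem of Ohsawa--Takegoshi type, since no boundary behaviour is at stake there.

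Granting such an $E$, both inequalities in Theorem~\ref{main} hold for this $\mu$, and Theorem~\ref{main} yields the essential normality of $P$. The genuine technical difficulties I anticipate are twofold: first, showing that transversality really produces the clean $\mathbb B_d$-model uniformly up to $\pbn$, i.e. controlling how the charts and the induced metric degenerate as $|w|\to1$; and second, bounding the norm of the patched extension operator, that is, keeping the $\bar\partial$-corrections $L^2$-bounded. By contrast, once the local model is in place, the Carleson estimate (i) is essentially a volume computation.
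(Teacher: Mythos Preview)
Your overall strategy --- produce a weighted volume measure on $M$ satisfying the hypotheses of Theorem~\ref{main} --- is exactly the paper's, and your Carleson argument (i) essentially matches the paper's proof that $\mu_s$ is Carleson (local graph coordinates near $\pbn$, volume comparison $v_d(D(z,R)\cap M)\lesssim(1-|z|^2)^{d+1}$). The divergence is in step (ii). You propose to build the extension operator $E$ directly: Beatrous-type integral kernels in transversal boundary charts, patched by a partition of unity, with the $\bar\partial$-error removed by H\"ormander/Kohn and the interior singular part handled by Ohsawa--Takegoshi. The paper takes a genuinely different route, avoiding $\bar\partial$ and extension theorems altogether. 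It works with the truncated measure $\mu_s=(1-|w|^2)^{n-d}dv_d|_{M_s}+\sum(1-|z_i|^2)^{n+1}\delta_{z_i}$ and proves the operator inequality $T_{\mu_s}^3\ge cT_{\mu_s}$ directly, by showing the induced operator $\tilde T_{\mu_s}$ on $\mathcal P(\mu_s)$ is bounded below. The mechanism is a pointwise linearization: for $z\in M$ near $\pbn$ one projects a hyperbolic neighborhood of $z$ in $M$ holomorphically onto the tangent plane $T_zM\subset\bn$ via a map $p_z$; on affine $d$-planes the weighted kernel reproduces values exactly (Lemma~\ref{integral on affine space}), and transversality forces $\beta(w,p_z(w))\to0$ uniformly as $|z|\to1$ (Lemma~\ref{s2}), so $\tilde T_{\mu_s}f(z)$ differs from a positive multiple of $f(z)$ by a controllable remainder (Lemma~\ref{s3}). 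Your route would give a constructive extension formula, but the step where the $\bar\partial$-correction is forced to vanish on $M$ is precisely what pinned \cite{Beatrous} to smooth $M$ and \cite{DYT} to complete intersections; carrying it through at the level of generality here is not obviously easier than the paper's argument. In the paper the extension operator (Corollary~\ref{extension}) falls out as a \emph{consequence} of the operator estimate rather than serving as an input.
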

Note that in this case, condition (1) is equivalent to that $\tilde{M}$ is not tangent with $\pbn$ at every point of $\tilde{M}\cap\pbn$. Condition (2) implies that $\tilde{M}$ has only finite singular points inside $\bn$.

\begin{cor}
Under the assumption of Theorem \ref{manifold}, the projection operator~$P$~is in the Toeplitz algebra $\toe$.
\end{cor}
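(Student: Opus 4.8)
The plan is to read off the corollary from the proof of Theorem~\ref{manifold} together with Corollary~\ref{toe2}. The crucial observation is that the argument establishing Theorem~\ref{manifold} does more than assert essential normality: it exhibits an explicit positive, finite, regular, Borel measure $\mu$ on $M$ — one may take $d\mu=(1-|w|^2)^{n-d}\,dv_d$, where $v_d$ is the natural volume measure on the regular part of the $d$-dimensional analytic set $M$, the finitely many interior singular points forming a $v_d$-null set — for which there are constants $C,c>0$ with
$$
c\|f\|^2\le\int_M|f(w)|^2\,d\mu(w)\le C\|f\|^2,\qquad\forall f\in Q.
$$
In other words, the full hypothesis of Theorem~\ref{main} is verified for this $M$ and this $\mu$, not merely its conclusion.

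Granting that, I would simply invoke Corollary~\ref{toe2}. For completeness, recall how its proof runs: from the upper estimate above and Lemma~\ref{4equivalent}, $\mu$ is a Carleson measure, so the operator $T_\mu$ on $\ber$ characterized by $\langle T_\mu f,f\rangle=\int_M|f|^2\,d\mu$ for $f\in\ber$ is bounded and, by Lemma~\ref{measure in toeplitz}, lies in $\toe$. Since $T_\mu$ is positive, vanishes on $P$, and is bounded below on $Q$ by the lower estimate, the point $0$ is isolated in $\sigma(T_\mu)$ and $\ker T_\mu=P$. Choosing $g\in C(\mathbb{R})$ with $g(0)=0$ and $g\equiv1$ on $\sigma(T_\mu)\setminus\{0\}$, the continuous functional calculus gives $Q=g(T_\mu)\in\toe$, and hence $P=I-Q\in\toe$, which is the assertion.

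The only genuine work is hidden inside Theorem~\ref{manifold}: constructing $\mu$ and proving the two-sided norm equivalence on $Q$, which is where the transversality and boundary-smoothness hypotheses, a Beatrous-type holomorphic extension operator (cf.\ \cite{Beatrous}), and the localization estimates of complex harmonic analysis enter. For the corollary itself there is essentially no obstacle; the main — and very mild — point to record is that the proof of Theorem~\ref{manifold} really produces the norm equivalence on $Q$, so that Corollary~\ref{toe2} applies verbatim. \bx
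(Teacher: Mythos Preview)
Your proposal is correct and follows exactly the route the paper intends: the proof of Theorem~\ref{manifold} produces a measure on $M$ satisfying the hypotheses of Theorem~\ref{main}, whence Corollary~\ref{toe2} gives $P,Q\in\toe$. One small technical remark: the measure actually constructed in the proof of Theorem~\ref{manifold} is $d\mu_s=(1-|w|^2)^{n-d}dv_d|_{M_s}+d\delta$ (with point masses at the interior singular points), not quite the full $(1-|w|^2)^{n-d}dv_d$ on $M$ that you name; the latter does also work, but that is only observed afterwards in Remark~\ref{rem123}(1) via a finite-rank perturbation and index argument.
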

In order to prove Theorem~\ref{manifold}, we need to establish a few lemmas.

\begin{lem}\label{integral on affine space}
Let~$\alpha$~be the intersection of a~$d$-dimensional affine space and~$\bn$. Then $\alpha$ is a $d$-dimensional ball. Let~$r$~be the radius of~$\alpha$~and~$v$~be the volume measure on~$\alpha$. Then for any function~$f$~holomorphic on~$\alpha$~and any~$R>0$, $z\in\alpha$,
$$\int_{\alpha\cap D(z,R)}f(w)\frac{(1-|w|^2)^{n-d}}{(1-\langle z,w\rangle)^{n+1}}dv(w)=r^{-2}C_Rf(z).$$
where
$$C_R=\int_{D_d(0,R)}(1-|w|^2)^{n-d}d\nu(w).$$
Here~$D_d(0,R)$~means the hyperbolic ball in~$\mathbb{B}_d$~centered at~$0$~with radius~$R$~and~$\nu$~is the volume measure on~$\mathbb{B}_d$.
\end{lem}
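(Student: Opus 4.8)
The plan is to reduce the identity, by a unitary of $\cn$ followed by an isometric rescaling onto $\mathbb{B}_d$, to the weighted mean value property for holomorphic functions on a Euclidean ball of $\mathbb{C}^d$. If $d=n$ then $\alpha=\bn$, $r=1$, $n-d=0$, and the asserted identity is simply the mean value property over the Euclidean ball $D_n(0,R)$; so I would assume $d<n$. Let $L$ be the affine $d$-plane with $\alpha=L\cap\bn$, let $p$ be the point of $L$ nearest the origin, and set $c=|p|\in[0,1)$, so that the direction space of $L$ is orthogonal to $p$ and $r=(1-c^2)^{1/2}$. A unitary of $\cn$ preserves $\bn$, the Euclidean and pseudohyperbolic metrics, the relevant volume measures, and the integrand; so after such a change I may assume $p=c\,e_{d+1}$ and $L-p=\operatorname{span}\{e_1,\dots,e_d\}$, i.e.
\[
\alpha=\{(w',c,0):w'\in\mathbb{C}^d,\ |w'|<r\}.
\]

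Next I would introduce the parametrization $\Phi:\mathbb{B}_d\to\alpha$, $\Phi(\tilde w)=(r\tilde w,c,0)$. For $z=\Phi(\tilde z)$ and $w=\Phi(\tilde w)$ one has $1-|w|^2=r^2(1-|\tilde w|^2)$ and $1-\langle z,w\rangle=r^2(1-\langle\tilde z,\tilde w\rangle)$, so feeding these into $\rho(z,w)^2=1-\frac{(1-|z|^2)(1-|w|^2)}{|1-\langle z,w\rangle|^2}$ (Lemma \ref{basic about varphi}(2)) gives $\rho(z,w)=\rho_{\mathbb{B}_d}(\tilde z,\tilde w)$; thus $\Phi$ is an isometry for the pseudohyperbolic metrics, and in particular $\Phi^{-1}(\alpha\cap D(z,R))=D_d(\tilde z,R)$. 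Changing variables by $w=\Phi(\tilde w)$ — so that $dv(w)=r^{2d}\,d\nu(\tilde w)$ and $f(w)=\hat f(\tilde w):=f(\Phi(\tilde w))$ is holomorphic on $\mathbb{B}_d$ — the powers of $r$ collect to $r^{2(n-d)-2(n+1)+2d}=r^{-2}$, and the integral becomes
\[
r^{-2}\int_{D_d(\tilde z,R)}\hat f(\tilde w)\,\frac{(1-|\tilde w|^2)^{n-d}}{(1-\langle\tilde z,\tilde w\rangle)^{n+1}}\,d\nu(\tilde w).
\]

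Then I would move the center to the origin of $\mathbb{B}_d$ via the involutive automorphism $\tilde w=\varphi_{\tilde z}(u)$, which maps $D_d(0,R)$ onto $D_d(\tilde z,R)$. Using Lemma \ref{basic about varphi}(1)--(3) in $\mathbb{B}_d$, which express $1-|\tilde w|^2$, $1-\langle\tilde z,\tilde w\rangle$, and the Jacobian of $\varphi_{\tilde z}$ as explicit ratios of powers of $1-|\tilde z|^2$ and $|1-\langle u,\tilde z\rangle|$, I would check that all of these factors cancel identically, leaving
\[
r^{-2}\int_{D_d(0,R)}g(u)\,(1-|u|^2)^{n-d}\,d\nu(u),\qquad g(u):=\frac{\hat f(\varphi_{\tilde z}(u))}{(1-\langle u,\tilde z\rangle)^{n+1}},
\]
where $g$ is holomorphic on $\mathbb{B}_d$ and $g(0)=\hat f(\tilde z)=f(z)$. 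Finally, since $D_d(0,R)=\{|u|<\tanh R\}$ is a Euclidean ball centered at the origin (by Lemma \ref{description of D(z,R)}) and the weight $(1-|u|^2)^{n-d}$ is radial, I would expand $g$ into its homogeneous Taylor parts $g=\sum_{k\ge0}P_k$ (uniformly convergent on the compact set $\overline{D_d(0,R)}\subset\mathbb{B}_d$) and use rotation invariance $P_k(e^{i\theta}u)=e^{ik\theta}P_k(u)$ to see that every term of positive degree integrates to $0$; hence the last integral equals $g(0)\int_{D_d(0,R)}(1-|u|^2)^{n-d}\,d\nu(u)=f(z)\,C_R$, which together with the previous step is the assertion. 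The computations are elementary throughout; the only place demanding care is the bookkeeping across the two successive changes of variable — the rescaling $\Phi$ and the Möbius substitution $\varphi_{\tilde z}$ — where one must verify that the powers of $r$, of $1-|\tilde z|^2$, and of $1-\langle u,\tilde z\rangle$ combine exactly as claimed.
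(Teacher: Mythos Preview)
Your proposal is correct and follows essentially the same route as the paper: an affine rescaling of $\alpha$ onto $\mathbb{B}_d$ (the paper's $\phi$, your $\Phi$ after a preliminary unitary), followed by the M\"obius substitution $\varphi_{\tilde z}$ and an appeal to the mean value property against the radial weight $(1-|u|^2)^{n-d}$. The only cosmetic differences are that you verify $\Phi$ preserves the pseudohyperbolic metric by direct computation from Lemma~\ref{basic about varphi}(2), whereas the paper invokes \cite[Proposition~2.4.2]{Rudin} to see that $\varphi_z\phi^{-1}$ is an automorphism of $\mathbb{B}_d$, and that you spell out the final mean-value step via homogeneous expansion where the paper simply writes the equality.
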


\begin{proof}
Let~$z_0$~be the center of $\alpha$~and let
$$\phi:~\alpha\to\beta=\frac{1}{r}(\alpha-z_0),~z\mapsto\frac{1}{r}(z-z_0).$$
The affine space~$\beta$ is the intersection of a hyperplane and~$\bn$, therefore can be identified with~$\mathbb{B}_d$. Clearly~$\phi$~is biholomorphic. For~$z\in\alpha$, consider the map
$$\varphi_z\phi^{-1}:~\beta\to\gamma=\varphi_z(\alpha).$$
By~\cite[Proposition 2.4.2]{Rudin}, $\gamma$~is an affine space containing~$0$. Hence~$\gamma$~can also be identified with~$\mathbb{B}_d$. So~$\varphi_z\phi^{-1}$~is an automorphism of~$\mathbb{B}_d$~and therefore preserves the hyperbolic metric. We get
\begin{eqnarray*}
\phi(D(z,R)\cap\alpha)
&=&\phi\varphi_z^{-1}\varphi_z(D(z,R)\cap\alpha)=\phi\varphi_z^{-1}(D(0,R)\cap\gamma)\\
&=&\phi\varphi_z^{-1}(D_d(0,R))=D_d(\phi(z),R).
\end{eqnarray*}
Therefore
\begin{eqnarray*}
&&\int_{\alpha\cap D(z,R)}f(w)\frac{(1-|w|^2)^{n-d}}{(1-\langle z,w\rangle)^{n+1}}dv(w)\\
&=&\int_{D_d(\phi(z),R)}f\phi^{-1}(\eta)\frac{(1-|\phi^{-1}(\eta)|^2)^{n-d}}{(1-\langle z,\phi^{-1}(\eta)\rangle)^{n+1}}d\nu(\phi^{-1}(\eta))\\
&=&\int_{D_d(\phi(z),R)}f\phi^{-1}(\eta)\frac{(r^2-r^2|\eta|^2)^{n-d}}{(r^2-r^2\langle\phi(z),\eta\rangle)^{n+1}}r^{2d}d\nu(\eta)\\
&=&r^{-2}\int_{D_d(\phi(z),R)}f\phi^{-1}(\eta)\frac{(1-|\eta|^2)^{n-d}}{(1-\langle\phi(z),\eta\rangle)^{n+1}}d\nu(\eta)\\
&=&r^{-2}C_Rf(z).
\end{eqnarray*}
The last equation comes from the following argument.

In general, if~$g$~is holomorphic on~$\mathbb{B}_d$, for~$R>0$~and~$\xi\in\mathbb{B}_d$,
\begin{eqnarray*}
&&\int_{D_d(\xi,R)}g(\eta)\frac{(1-|\eta|^2)^{n-d}}{(1-\langle\xi,\eta\rangle)^{n+1}}d\nu(\eta)\\
&=&\int_{D_d(0,R)}g\varphi_{\xi}(w)\frac{(1-|\varphi_{\xi}(w)|^2)^{n-d}}{|1-\langle\xi,\varphi_{\xi}(w)\rangle)^{n+1}}\frac{(1-|\xi|^2)^{d+1}}{(1-\langle w,\xi\rangle|^{2(d+1)}}d\nu(w)\\
&=&\int_{D_d(0,R)}g\varphi_{\xi}(w)\frac{(1-\langle\xi,w\rangle)^{n+1}(1-|\xi|^2)^{n-d}(1-|w|^2)^{n-d}(1-|\xi|^2)^{d+1}}{(1-|\xi|^2)^{n+1}|1-\langle\xi,w\rangle|^{2(n-d)}|1-\langle w,\xi\rangle|^{2(d+1)}}d\nu(w)\\
&=&\int_{D_d(0,R)}g\varphi_{\xi}(w)\frac{(1-|w|^2)^{n-d}}{(1-\langle w,\xi\rangle)^{n+1}}d\nu(w)\\
&=&C_Rg(\xi).
\end{eqnarray*}
This completes the proof.
\end{proof}

\begin{lem}\label{integral on Bd}
For~$t>0$, we have
$$\lim_{r\to1-}\sup_{z\in\mathbb{B}_d}\int_{w\in\mathbb{B}_d:r<|w|<1}\frac{(1-|w|^2)^t}{|1-\langle z,w\rangle|^{d+1}}d\nu(w)=0.$$
\end{lem}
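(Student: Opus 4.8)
The plan is to pass to polar coordinates in $\mathbb{B}_d$, integrate over the sphere $\partial\mathbb{B}_d$ first, and thereby reduce the estimate to a one–variable integral that is manifestly uniform in $z$.

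The one ingredient beyond what is already stated is the \emph{uniform} form of Lemma~\ref{1.4.10 Rudin} for $I_c$. Since the integrand defining $I_c(z)$ depends only on $|z|$, the function $s\mapsto I_c(se_1)(1-s^2)^{c}$ on $[0,1)$ is continuous and, by the asymptotics $I_c(z)\approx(1-|z|^2)^{-c}$ for $c>0$, extends continuously to $[0,1]$; hence it is bounded, which gives
$$\int_{\partial\mathbb{B}_d}\frac{d\sigma(\zeta)}{|1-\langle a,\zeta\rangle|^{d+1}}\le\frac{C}{1-|a|^2},\qquad a\in\mathbb{B}_d,$$
with $C$ depending only on $d$ (this is $I_c$ taken in $\mathbb{B}_d$ with $c=1$, so that the exponent is $d+c=d+1$).

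Now write $w=s\zeta$ with $s\in[0,1)$, $\zeta\in\partial\mathbb{B}_d$, so $d\nu(w)=c_d\,s^{2d-1}\,ds\,d\sigma(\zeta)$ for a dimensional constant $c_d$, and note $1-\langle z,s\zeta\rangle=1-\langle sz,\zeta\rangle$. Then
$$\int_{\{w:\,r<|w|<1\}}\frac{(1-|w|^2)^{t}}{|1-\langle z,w\rangle|^{d+1}}\,d\nu(w)=c_d\int_r^1 s^{2d-1}(1-s^2)^{t}\left(\int_{\partial\mathbb{B}_d}\frac{d\sigma(\zeta)}{|1-\langle sz,\zeta\rangle|^{d+1}}\right)ds.$$
By the displayed estimate the inner integral is at most $C(1-s^2|z|^2)^{-1}\le C(1-s^2)^{-1}$, the last step because $|z|\le1$; bounding also $s^{2d-1}\le1$ we obtain
$$\int_{\{w:\,r<|w|<1\}}\frac{(1-|w|^2)^{t}}{|1-\langle z,w\rangle|^{d+1}}\,d\nu(w)\le c_dC\int_r^1(1-s^2)^{t-1}\,ds,$$
and the right–hand side no longer involves $z$. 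Since $t>0$ the integrand is integrable near $s=1$, and using $1-s\le1-s^2\le2(1-s)$ one gets $\int_r^1(1-s^2)^{t-1}\,ds\le C_t(1-r)^{t}$. Hence $\sup_{z\in\mathbb{B}_d}\int_{r<|w|<1}\cdots\le C_t'(1-r)^{t}\to0$ as $r\to1^{-}$, which is the assertion.

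There is no real obstacle once the argument is arranged this way; the only step needing a word of justification is the uniform bound for $I_1$, as opposed to the purely asymptotic statement in Lemma~\ref{1.4.10 Rudin}. It is worth recording where the hypothesis $t>0$ enters: it is exactly what makes $\int_0^1(1-s^2)^{t-1}\,ds$ finite, the gain $(1-s^2)^{t}$ from the annulus being just enough to beat the single negative power $(1-s^2)^{-1}$ produced by the spherical integral — and that power is single precisely because the exponent $d+1$ of $|1-\langle z,w\rangle|$ equals the complex dimension $d$ plus one.
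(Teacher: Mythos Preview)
Your argument is correct and is essentially identical to the paper's own proof: both pass to polar coordinates, invoke the uniform bound $I_1(z)\le C(1-|z|^2)^{-1}$ from Lemma~\ref{1.4.10 Rudin} for the spherical integral, and reduce to $\int_r^1(1-s^2)^{t-1}\,ds\to0$. Your added justification that the asymptotic $I_c(z)\approx(1-|z|^2)^{-c}$ combined with continuity yields a genuine uniform bound is a welcome clarification the paper takes for granted.
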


\begin{proof}
Let
$$I(z)=\int_S\frac{1}{|1-\langle z,\zeta\rangle|^{d+1}}d\sigma(\zeta).$$
Where~$S$~is the unit sphere in~$\mathbb{C}^d$~and~$\sigma$~is the volume measure on~$S$.
By Lemma \ref{1.4.10 Rudin}, there exists~$C>0$~such that
$$I(z)\leq C(1-|z|^2)^{-1}.$$
Hence
\begin{eqnarray*}
\int_{r<|w|<1}\frac{(1-|w|^2)^t}{|1-\langle z,w\rangle|^{d+1}}d\nu(w)&=&\int_r^1\int_S\frac{(1-s^2)^t}{|1-\langle z,s\zeta\rangle|^{d+1}}s^{2d-1}d\sigma(\zeta)ds\\
&=&\int_r^1(1-s^2)^ts^{2d-1}I(sz)ds\\
&\leq&C\int_r^1(1-s^2)^t(1-|sz|^2)^{-1}ds\\
&\leq&C\int_r^1(1-s^2)^{t-1}ds\to 0.~~~~~~(r\to1-)
\end{eqnarray*}
This completes the proof.
\end{proof}

Suppose~$\tilde{M}$~is as in Theorem~\ref{manifold}. We first assume that $\tilde{M}$ is connected. For~$0\leq s<t\leq 1$, define
$$M_s^t=\{z\in M|~s\leq|z|<t\}.$$
Write $M_s=M_s^1, M^t=M_0^t$.
Since~$\tilde{M}$~has no singular point on~$\pbn$, we can cover~$\pbn\cap\tilde{M}$~with finite open sets~$\{U_i\}$, $U_i\subset\tilde{M}$~such that:
\begin{itemize}
\item[(1)] For each~$i$, we can find~$n-1$~of the canonical basis of~$\mathbb{C}^n$, denoted $e_{i_1},\cdots,e_{i_{n-1}}$~such that for any~$z\in U_i$, the~$n$~vectors~$\{z, e_{i_1},\cdots, e_{i_{n-1}}\}$~spans~$\mathbb{C}^n$.
\item[(2)] $\tilde{M}$~has local coordinates on each~$U_i$, i.e., there exists open set~$\Omega_i\subset\mathbb{C}^d$~and ~$\varphi_i:\Omega_i\to U_i$~which is one to one and holomorphic.
\end{itemize}
Fix~$z\in U_i$, apply the Gram-Schimidt process to~$\{z,e_{i_1},\cdots,e_{i_{n-1}}\}$~to obtain a new basis~$\{f_1^z,\cdots,f_n^z\}$, then~$z=(z_1,0,\cdots,0)$ under this basis. Let~$G^z:\Omega_i\to U_i$, $G^z=(g_1^z,\cdots,g_n^z)$~be the expression of~$\varphi_i$~under the new basis. Note that the new basis and expression depend continuously on~$z$.

Since~$\tilde{M}$~intersects~$\pbn$~transversally, by possibly refining the cover~$\{U_i\}$~we can assume that for each~$U_i$, $\forall z\in U_i$, $(\frac{\partial g_1^z}{\partial z_1},\cdots,\frac{\partial g^z_1}{\partial z_d})$ is non-zero at~$z$. Since the matrix~$[\frac{\partial g_i^z}{\partial z_j}(z)]_{1\leq i\leq n,1\leq j\leq d}$ has rank~$d$, by possibly refining~$\{U_i\}$~again we could get~$2\leq k_1,\cdots,k_{d-1}\leq n$~for each~$U_i$, such that the determinant~$\frac{\partial(g_1^z,g_{k_1}^z,\cdots,g_{k_{d-1}}^z)}{\partial(z_1,\cdots,z_d)}|_z\neq0$, $\forall z\in U_i$. Let~$\epsilon$~be the Lebesgue number of the cover~$\{U_i\}$~and let~$V_i=\{z\in U_i|~d(z,\partial U_i)>\frac{1}{2}\epsilon\}$, then~$\pbn\cap\tilde{M}\subset\cup V_i$. The function~$\frac{\partial(g_1^z,g_{k_1}^z,\cdots,g_{k_{d-1}}^z)}{\partial(z_1,\cdots,z_d)}(w)$~is uniformly continuous on~$\{(z,w)|z\in\bar{V_i},w\in U_i\}$. Therefore~$\exists\delta>0$~such that~$\forall z\in V_i$, $\forall w\in B(z,\delta)$, $\frac{\partial(g_1^z,g_{k_1}^z,\cdots,g_{k_{d-1}}^z)}{\partial(z_1,\cdots,z_d)}(w)\neq0$. By the implicit function theorem, we have:

\begin{lem}
There exists a finite open cover~$\{V_i\}$~of~$\pbn\cap\tilde{M}$~and~$\delta>0$~such that for any fixed~$V_i$, we can pick~$d-1$~numbers out of~$\{2,\cdots,n\}$, assume they are~$\{2,\cdots,d\}$~without loss of generality, such that~$\forall z\in\bar{V_i}$~and~$\forall w\in B(z, \delta)$,
$$w=(w_1,\cdots,w_d,F_{d+1}^z(w'),\cdots,F_n^z(w'))$$
under the basis~$\{f_1^z,\cdots,f_n^z\}$, where~$w'=(w_1,\cdots,w_d)$. The functions~$F_i^z(w')$~are holomorphic on~$w'$~and depends continuously on~$z$.
\end{lem}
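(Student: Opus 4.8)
The plan is to read the assertion off from the holomorphic inverse (equivalently, implicit) function theorem applied to the data $\{f^z_1,\dots,f^z_n\}$, $G^z=(g^z_1,\dots,g^z_n)$ built in the paragraphs above; the only real work is to make the radius $\delta$ uniform in $z$. Fix one chart $V_i\subset U_i$ and relabel the coordinates of $\mathbb{C}^n$ so that the non\-vanishing $d\times d$ minor singled out above is the one formed by the rows $g^z_1,\dots,g^z_d$, i.e.\ $\{k_1,\dots,k_{d-1}\}=\{2,\dots,d\}$. For $z\in\overline{V_i}$ set $\zeta_z=\varphi_i^{-1}(z)\in\Omega_i$ and consider the holomorphic map $H^z=(g^z_1,\dots,g^z_d):\Omega_i\to\mathbb{C}^d$. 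Since $\varphi_i(\zeta_z)=z$ and $z=(z_1,0,\dots,0)$ in the basis $\{f^z_1,\dots,f^z_n\}$, we get $H^z(\zeta_z)=(z_1,0,\dots,0)\in\mathbb{C}^d$, and by construction $\det DH^z(\zeta_z)=\frac{\partial(g^z_1,g^z_2,\dots,g^z_d)}{\partial(z_1,\dots,z_d)}(z)\neq 0$. Hence $H^z$ is a biholomorphism of a neighborhood of $\zeta_z$ onto a neighborhood of $(z_1,0,\dots,0)$.

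Next I would make this uniform in $z$. All the relevant data --- the basis $\{f^z_j\}$, the functions $g^z_j$ and their first-order partials --- depend continuously on $z$ in the compact set $\overline{V_i}$, so $|\det DH^z(\zeta_z)|$ is bounded below by a positive constant and the $C^1$-sizes of the $H^z$ near $\zeta_z$ are bounded above, uniformly in $z\in\overline{V_i}$. The quantitative inverse function theorem then yields a radius $\rho_i>0$, independent of $z\in\overline{V_i}$, such that each $H^z$ is biholomorphic from the Euclidean ball $B(\zeta_z,\rho_i)$ onto an open set containing a ball of a fixed radius about $(z_1,0,\dots,0)$. Taking the minimum over the finitely many charts and using that each $\varphi_i$ is a homeomorphism onto $U_i\supset\overline{V_i}$, I would then fix $\delta>0$ small enough that for every $i$ and every $z\in\overline{V_i}$ one has $\tilde{M}\cap B(z,\delta)\subset\varphi_i\big(B(\zeta_z,\rho_i)\big)$.

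Finally, for $w\in\tilde{M}\cap B(z,\delta)$ write $w=\varphi_i(\zeta)$ with $\zeta\in B(\zeta_z,\rho_i)$; in the basis $\{f^z_j\}$ this reads $w_j=g^z_j(\zeta)$ for $j=1,\dots,n$. The first $d$ of these equations say $w'=(w_1,\dots,w_d)=H^z(\zeta)$, hence $\zeta=(H^z)^{-1}(w')$, and substituting into the remaining $n-d$ equations gives $w_j=g^z_j\big((H^z)^{-1}(w')\big)=:F^z_j(w')$ for $j=d+1,\dots,n$. Each $F^z_j$ is holomorphic in $w'$, being a composition of the holomorphic maps $(H^z)^{-1}$ and $g^z_j$; and since $(H^z)^{-1}$ and $g^z_j$ vary continuously with $z$ (by the uniform inverse function theorem and the continuous dependence of the $g^z_j$ on $z$), so do the $F^z_j$. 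This is precisely the claimed representation.

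The main obstacle is the uniformity: producing a single $\delta$ valid for all $z\in\overline{V_i}$ and all $i$, rather than a $z$-dependent radius. This is exactly what the quantitative inverse function theorem buys us, once we know --- as established in the construction preceding the lemma --- that the $d\times d$ Jacobian minor is non-vanishing on a full Euclidean $\delta$-neighborhood uniformly over $z$ in the compact set $\overline{V_i}$, and that all the coordinate data depend continuously on $z$.
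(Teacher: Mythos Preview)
Your proposal is correct and follows essentially the same approach as the paper: the paper's proof is the paragraph immediately preceding the lemma, which establishes the uniform non\-vanishing of the $d\times d$ Jacobian minor on $B(z,\delta)$ via uniform continuity over the compact set $\{(z,w):z\in\overline{V_i},\,w\in U_i\}$, and then simply invokes the implicit function theorem. Your write-up is more detailed---you phrase things through the inverse function theorem applied to $H^z=(g_1^z,\dots,g_d^z)$ and spell out the quantitative/uniform version needed to get a single $\delta$---but the argument is the same in substance.
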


In the later discussion, whenever we fix a~$z\in V_i$, we will discuss under the new basis~$\{f_i^z\}_{i=1}^n$~and the new expression~$(w',F_{d+1}^z,\cdots,F_n^z)$~and we will omit the superscript ``$z$'' for convenience. Moreover, we will denote any constant that depends only on~$M$~by~$C$~as long as it doesn't cause confusion. So~$C$~may refer to different constant in different places.

By Proposition 1 in \cite[Page 31]{Complex analytic sets}, the assumptions in Theorem \ref{manifold} implies $\tilde{M}$ has only finite singular points in $\bn$. Let~$\Sigma=\{z_1,\cdots,z_m\}$~be the set of all singular points of~$\tilde{M}$~inside~$\bn$.
Take~$0<s_1<1$~such that~$\Sigma\cap M_{s_1}=\emptyset$. Then the volume measure~$v_d$~is well-defined on~$M_{s_1}$. In local coordinates, $v_d$~corresponds to the volume form~$E(w)dx_1\wedge dy_1\wedge\cdots\wedge dy_d$, where~$E(w)$~is the square root of the absolute value of the determinant of the matrix representation of the metric tensor on~$M_{s_1}$. Note that~$E(w)$ is uniformly continuous on~$z$~and~$w$. Let
$$\delta=\sum_{i=1}^m(1-|z_i|^2)^{n+1}\delta_{z_i},$$ where~$\delta_{z_i}$~is the point mass at~$z_i$. For~$s_1<s<1$, let
$$d\mu_s=(1-|w|^2)^{n-d}dv_d|_{M_s}+d\delta.$$

We will prove that for~$s$~sufficiently close to~$1$, $\mu_s$~satisfies the assumption of Theorem~\ref{main}, therefore Theorem~\ref{manifold} holds.

Fix~$z\in V_i$(and the basis depending on~$z$), define a map
$$p_z:\tilde{M}\cap B(z,\delta)\to T\tilde{M}|_z$$
$$(w',F_{d+1}(w'),\cdots,F_n(w'))\mapsto (w',\sum_{i=1}^d\frac{\partial F_{d+1}}{\partial w_i}(z')(w_i-z_i),\cdots,\sum_{i=1}^d\frac{\partial F_n}{\partial w_i}(z')(w_i-z_i))$$
Here $T\tilde{M}|_z$ is the tangent space of $\tilde{M}$ at $z$. Note that by construction, $F_i(z')=0, i=d+1,\cdots,n$.
Clearly, $p_z$~is one to one and holomorphic, $p_z(w)-w\perp z$~and
$$|p_z(w)-w|=O(|w'-z'|^2).$$

\begin{lem}\label{s2}
Fix~$R>0$, then there exists~$1>s_2>s_1$, such that
\begin{itemize}
\item[(1)] $\forall z\in M_{s_2}$, $D(z,R)\subset B(z,\delta)$.
\item[(2)] $\forall z\in M_{s_2}$, $\forall w\in D(z,R)$, $p_z(w)\in\bn$.
\item[(3)] $\sup\limits_{w\in D(z,R)}|\frac{1-|p_z(w)|^2}{1-|w|^2}-1|\to0,~~~|z|\to1$.
\item[(4)] $\sup\limits_{w\in D(z,R)}\beta(p_z(w),w)\to 0$,~~~$|z|\to1$.
\end{itemize}
\end{lem}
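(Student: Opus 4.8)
The plan is to deduce all four assertions from two ingredients: a description of how the hyperbolic ball $D(z,R)$ collapses onto $z$ as $|z|\to1$, and the quadratic estimate $|p_z(w)-w|=O(|w'-z'|^2)$ recorded above, made uniform in $z$. First I would choose $s_2\in(s_1,1)$ so large that $M_{s_2}\subset\bigcup_i V_i$; such an $s_2$ exists because $\pbn\cap\tilde{M}$ is compact and $\bigcup_i V_i$ is an open neighbourhood of it, so otherwise a sequence $z_k\in M_{1-1/k}\setminus\bigcup_i V_i$ would have a limit point in $\pbn\cap\tilde{M}$ lying outside the open set $\bigcup_i V_i$. For $z\in M_{s_2}$ fix an index $i$ with $z\in V_i$ and work in the associated basis $\{f_j^z\}$, in which $z=(|z|,0,\dots,0)$, $P_zw$ is the first coordinate, and $Q_zw$ the remaining ones.

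Next I would read off from Lemma~\ref{description of D(z,R)} (and the discussion following it) that, for $w\in D(z,R)$, $|P_zw-z|\le C(1-|z|^2)$ and $|Q_zw|\le C(1-|z|^2)^{1/2}$, with $C$ depending only on $R$: the ellipsoid $D(z,R)$ has center within $O(1-|z|^2)$ of $z$, semi-axis $O(1-|z|^2)$ along $z$, and semi-axes $O((1-|z|^2)^{1/2})$ transverse to $z$. Hence $|w-z|\le C(1-|z|^2)^{1/2}$, and enlarging $s_2$ so that $C(1-|z|^2)^{1/2}<\delta$ on $M_{s_2}$ gives (1). Since $w'-z'$ is the sub-vector $(w_1-z_1,w_2,\dots,w_d)$ of $w-z$ with $z_j=0$ for $j\ge2$, the same bound gives $|w'-z'|\le C(1-|z|^2)^{1/2}$, so the quadratic estimate for $p_z$ yields $|p_z(w)-w|\le C(1-|z|^2)$ for every $w\in\tilde{M}\cap D(z,R)$, with $C$ uniform in $z$ because $z$ ranges over the compact sets $\overline{V_i}$ and the $F_i^z$ depend continuously on $z$.

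The heart of the matter will be improving the bound on the linear term $\langle p_z(w)-w,w\rangle$ from the naive $O(1-|z|^2)$ to $O((1-|z|^2)^{3/2})$. Since $p_z(w)-w\perp z$, we have $\langle p_z(w)-w,P_zw\rangle=0$, so $\langle p_z(w)-w,w\rangle=\langle p_z(w)-w,Q_zw\rangle$, whence $|\langle p_z(w)-w,w\rangle|\le|p_z(w)-w|\,|Q_zw|\le C(1-|z|^2)^{3/2}$. Expanding $|p_z(w)|^2=|w|^2+2\operatorname{Re}\langle w,p_z(w)-w\rangle+|p_z(w)-w|^2$ then gives
$$1-|p_z(w)|^2=(1-|w|^2)+O\big((1-|z|^2)^{3/2}\big)$$
uniformly in $w$. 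Since $1-|w|^2\asymp1-|z|^2$ on $D(z,R)$ (a standard consequence of $\rho(z,w)<s_R$ and Lemma~\ref{basic about varphi}(2)), the right-hand side is positive for $|z|$ close to $1$, which yields (2) after a final enlargement of $s_2$, and dividing by $1-|w|^2$ yields (3). For (4) I would use the same bound to get $1-\langle p_z(w),w\rangle=(1-|w|^2)\big(1+O((1-|z|^2)^{1/2})\big)$ and feed this, together with the displayed estimate, into Lemma~\ref{basic about varphi}(2) for $\varphi_w(p_z(w))$, obtaining $1-\rho(p_z(w),w)^2\to1$ uniformly and hence $\beta(p_z(w),w)\to0$ uniformly.

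The single genuinely delicate point is the $O((1-|z|^2)^{3/2})$ estimate for $\langle p_z(w)-w,w\rangle$: the crude bound $|p_z(w)-w|\le C(1-|z|^2)$ has the same size as $1-|w|^2$ and would not force the relevant ratios to tend to $1$; the extra factor $(1-|z|^2)^{1/2}$ is bought by combining the perpendicularity $p_z(w)-w\perp z$ with the fact that $D(z,R)$ is only of transverse size $(1-|z|^2)^{1/2}$. Everything else — extracting the two size estimates from Lemma~\ref{description of D(z,R)} and making the error terms uniform over the finite cover — is routine given the compactness of the $\overline{V_i}$ and the continuous dependence of the local data on $z$.
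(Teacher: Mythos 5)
Your proposal is correct, and all four items do follow from your two ingredients; but the route differs from the paper's in an instructive way. The paper never expands $|p_z(w)|^2$ directly: instead it transports everything to the origin by $\varphi_z$, shows $|\varphi_z(p_z(w))-\varphi_z(w)|=O((1-|z|^2)^{1/2})$, deduces (2) from $|\varphi_z(w)|\le s_R$, deduces (4) from the equivalence of the hyperbolic and Euclidean metrics on the compact ball $D(0,2R)$, and only then obtains (3) as a consequence of $\rho(p_z(w),w)\to0$ via the identity $\frac{1-|p_z(w)|^2}{1-|w|^2}=\frac{1-|\varphi_w(p_z(w))|^2}{|1-\langle\varphi_w(p_z(w)),w\rangle|^2}$ from Lemma~\ref{basic about varphi}. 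You reverse the order, proving (3) first by the sharpened Euclidean estimate $|\langle p_z(w)-w,w\rangle|\le|p_z(w)-w|\,|Q_zw|=O((1-|z|^2)^{3/2})$ and then reading off (2) and (4). The ``extra half power'' you isolate is exactly what the paper's computation hides inside $\varphi_z$: since $P_z(p_z(w))=P_z(w)$ and $\langle p_z(w),z\rangle=\langle w,z\rangle$, the difference $\varphi_z(p_z(w))-\varphi_z(w)$ is $-(1-|z|^2)^{1/2}Q_z(p_z(w)-w)/(1-\langle w,z\rangle)$, i.e.\ the Möbius map magnifies the (purely transverse) displacement $p_z(w)-w$ only by $(1-|z|^2)^{1/2}/(1-|z|^2)$ rather than $(1-|z|^2)^{-1}$ — the same gain you obtain from orthogonality plus the transverse size of the ellipsoid. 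Your version is more self-contained for (3) (no appeal to metric equivalence on compact hyperbolic balls), while the paper's is slicker for (4); both deliver the same uniformity, resting in either case on the compactness of the $\overline{V_i}$ and the continuous dependence of the local data on $z$.
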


\begin{proof}
By Lemma~\ref{description of D(z,R)}, it's easy to see that (1) holds as long as we take~$s_2$~sufficiently close to~$1$.

To prove (2), we notice first that Lemma~\ref{description of D(z,R)} also implies
$$\sup_{w\in D(z,R)}|w-z|=O((1-|z|^2)^{\frac{1}{2}}).$$
Therefore
$$\sup_{w\in D(z,R)}|p_z(w)-w|=O(1-|z|^2).$$
Since~$\langle z,p_z(w)\rangle=\langle z,w\rangle\neq0$, $\varphi_z(p_z(w))$~is well defined. It's easy to verify that
$$\varphi_z(\xi)\in\bn \mbox{ if and only if }\xi\in\bn.$$
So we only need to make sure that~$\varphi_z(p_z(w))\in\bn$. Since
$$|\varphi_z(p_z(w))-\varphi_z(w)|=\frac{(1-|z|^2)^{\frac{1}{2}}}{|1-\langle w,z\rangle|}O(1-|z|^2)=O((1-|z|^2)^{\frac{1}{2}})$$
and
$$|\varphi_z(w)|\leq s_{\sss R},$$
when we take~$s_2$~sufficiently close to~$1$, we have~$|\varphi_z(p_z(w))|<1$. Therefore (2) is proved.

We prove (4) first. Take~$s_2$~so close to~$1$~that~$\forall z\in M_{s_2}$, $\forall w\in D(z,R)$, $\varphi_z(p_z(w))\in D(0,2R)$.
On~$D(0,2R)$, the hyperbolic distance and Euclidian distance are equivalent. Hence
$$\beta(p_z(w),w)=\beta(\varphi_z(p_z(w)),\varphi_z(w))\leq C|\varphi_z(p_z(w))-\varphi_z(w)|\to0,$$
as~$|z|\to1$.

Finally, since~$p_z(w)=\varphi_w\varphi_w(p_z(w))$~and~$|\varphi_w(p_z(w))|\to0$,
apply Lemma~\ref{basic about varphi} (2), we have
$$\frac{1-|p_z(w)|^2}{1-|w|^2}=\frac{1-|\varphi_w((p_z(w)))|^2}{|1-\langle\varphi_w(p_z(w)),w\rangle|^2}.$$
Notice that $|\varphi_w(p_z(w))|=\rho(w,p_z(w))$ tends to $0$ uniformly. We have (3). This completes the proof.
\end{proof}

\begin{lem}
For~$1>s>s_1$, the measure
$$d\mu_{s}=(1-|w|^2)^{n-d}dv_d|_{M_{s}}+\sum_{i=1}^m(1-|z_i|^2)^{n+1}\delta_{z_i}$$
is a Carleson measure.
\end{lem}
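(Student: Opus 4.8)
The plan is to verify condition~(3) of Lemma~\ref{4equivalent}: for some (equivalently every) fixed hyperbolic radius $r>0$ one wants $\sup_{z\in\bn}\mu_s(D(z,r))/v_n(D(z,r))<\infty$. Since $v_n(D(z,r))\approx(1-|z|^2)^{n+1}$, and since the three quantities $1-|z|^2$, $1-|w|^2$, $|1-\langle z,w\rangle|$ are mutually comparable when $w\in D(z,r)$ (with constants depending only on $r$), everything reduces to bounding $\mu_s(D(z,r))$ by a constant times $(1-|z|^2)^{n+1}$. Write $\mu_s=\delta+\nu_s$ with $\nu_s=(1-|w|^2)^{n-d}\,dv_d|_{M_s}$. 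The point mass term is immediate: $\delta$ is a finite measure supported on the finite set $\{z_1,\dots,z_m\}\subset\bn$, and if $z_i\in D(z,r)$ then $z\in D(z_i,r)$, so $(1-|z_i|^2)^{n+1}\approx(1-|z|^2)^{n+1}$; summing over $i$ gives the bound. So the real work is to show
$$\int_{M_s\cap D(z,r)}(1-|w|^2)^{n-d}\,dv_d(w)\ \le\ C\,(1-|z|^2)^{n+1}.$$

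For this I would fix $R=3r$, pick $s_2\in(s_1,1)$ as in Lemma~\ref{s2}, and take $s_2$ even closer to~$1$ so that in addition $M_{s_2}\subset\bigcup_i V_i$ and the quantities in parts~(3) and~(4) of Lemma~\ref{s2} are genuinely small on $M_{s_2}$. Splitting $\nu_s=\nu_s|_{M_s^{s_2}}+\nu_s|_{M_{s_2}}$, the first piece is a finite measure with compact support in $\bn$ (its support $\overline{M_s^{s_2}}$ is a compact subset of the smooth part of $\tilde{M}$, because every singular point has modulus $<s_1\le s$), hence Carleson. For the second piece, given $z$ I may assume $D(z,r)\cap M_{s_2}\neq\emptyset$ and choose $\zeta\in D(z,r)\cap M_{s_2}$; then $1-|\zeta|^2\approx1-|z|^2$ and $M_{s_2}\cap D(z,r)\subset M\cap D(\zeta,2r)$, so it suffices to bound $\int_{M\cap D(\zeta,2r)}(1-|w|^2)^{n-d}\,dv_d(w)$ by $C(1-|\zeta|^2)^{n+1}$.

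This is the core estimate. In the $\zeta$-adapted coordinates, $M$ is a holomorphic graph over its tangent affine space $\alpha:=T\tilde{M}|_\zeta$, and the map $p_\zeta$ from Lemma~\ref{s2} is the projection of this graph onto $\alpha$, which is literally the identity in the common graph parameter. By Lemma~\ref{s2}, $p_\zeta$ is defined on $M\cap D(\zeta,2r)$, takes values in $\bn$, moves points by $o(1)$ in the hyperbolic metric, and alters $1-|w|^2$ by a bounded factor; since moreover its derivative on $D(\zeta,2r)$ is $I+O((1-|\zeta|^2)^{1/2})$ and the induced volume densities on $M$ and on $\alpha$ differ there by $1+o(1)$, a routine Jacobian comparison (using $n>d$) yields
$$\int_{M\cap D(\zeta,2r)}(1-|w|^2)^{n-d}\,dv_d(w)\ \le\ C\int_{\alpha\cap D(\zeta,3r)}(1-|\eta|^2)^{n-d}\,dv(\eta).$$
Now apply Lemma~\ref{integral on affine space} with $f\equiv1$ and the point $\zeta\in\alpha$: the integral of $(1-|\eta|^2)^{n-d}|1-\langle\zeta,\eta\rangle|^{-(n+1)}$ over $\alpha\cap D(\zeta,3r)$ equals $r_\alpha^{-2}C_{3r}$, where $r_\alpha$ is the radius of the $d$-ball $\alpha\cap\bn$. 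Since $|1-\langle\zeta,\eta\rangle|^{n+1}\approx(1-|\zeta|^2)^{n+1}$ on $D(\zeta,3r)$, this gives $\int_{\alpha\cap D(\zeta,3r)}(1-|\eta|^2)^{n-d}\,dv(\eta)\approx r_\alpha^{-2}C_{3r}(1-|\zeta|^2)^{n+1}$, and chaining the inequalities, $\nu_s|_{M_{s_2}}(D(z,r))\le C\,r_\alpha^{-2}(1-|z|^2)^{n+1}$.

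The last thing to check is that $r_\alpha=r_{T\tilde{M}|_\zeta}$ is bounded below uniformly for $\zeta\in M_{s_2}$; this is the step I expect to be the real obstacle, since it is the one place where the transversality hypothesis genuinely enters. For it, $\zeta\mapsto T\tilde{M}|_\zeta$ extends continuously to the compact set $\tilde{M}\cap\pbn$, and at a boundary point $\zeta^*$ the hypothesis that $\tilde{M}$ is not tangent to $\pbn$ forces the affine space $T\tilde{M}|_{\zeta^*}$ to meet the open ball $\bn$, i.e.\ $r_{T\tilde{M}|_{\zeta^*}}>0$; compactness then gives $\inf_{\zeta\in M_{s_2}}r_{T\tilde{M}|_\zeta}>0$ as soon as $s_2$ is close enough to~$1$. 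Conceptually this is the crux of the lemma: without transversality one would only get $v_d(M\cap D(z,r))\lesssim(1-|z|^2)^{d/2}$, which is far too weak. Combining everything, $\sup_{z\in\bn}\mu_s(D(z,r))/v_n(D(z,r))<\infty$, so $\mu_s$ is a Carleson measure by Lemma~\ref{4equivalent}.
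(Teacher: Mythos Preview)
Your argument is correct, but it is considerably more elaborate than the paper's. The paper dispenses with $p_\zeta$, Lemma~\ref{integral on affine space}, and the separate compactness argument for $r_\alpha$ entirely. It works directly in the $\zeta$-adapted orthonormal basis $\{f_1^\zeta,\dots,f_n^\zeta\}$ set up earlier (where $f_1^\zeta=\zeta/|\zeta|$ and $M$ is locally the graph $(w',F_{d+1}(w'),\dots,F_n(w'))$) and observes two things: first, since $(1-|w|^2)/(1-|\zeta|^2)\le C$ on $D(\zeta,R)$, it suffices to show $v_d(M\cap D(\zeta,R))\le C(1-|\zeta|^2)^{d+1}$; second, since $\zeta=(z_1,0,\dots,0)$ lies in the coordinate $\mathbb{C}^d$, the coordinate projection $w\mapsto w'$ satisfies $|\varphi_{z'}(w')|\le|\varphi_\zeta(w)|$, so $\{w':w\in M\cap D(\zeta,R)\}\subset D_d(z',R)$, whose Lebesgue volume is $\approx(1-|z'|^2)^{d+1}=(1-|\zeta|^2)^{d+1}$. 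Transversality is not invoked in the proof itself; it was already absorbed into the choice of coordinates (it is exactly what allows the radial direction $f_1^\zeta$ to be one of the $d$ graph coordinates). What your route buys is that the role of transversality is made completely explicit through the lower bound on $r_\alpha$; what the paper's route buys is brevity.

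Two small technical remarks. In your step~6 you quote Lemma~\ref{integral on affine space} with $|1-\langle\zeta,\eta\rangle|^{-(n+1)}$ in the integrand, but the lemma has the holomorphic kernel $(1-\langle\zeta,\eta\rangle)^{-(n+1)}$; the version with absolute value is not literally equal to $r_\alpha^{-2}C_{3r}$, only comparable to it (one checks this by the same change of variables). Your final $\approx$ is nonetheless correct. Also, in your parenthetical about what fails without transversality, the worst-case bound is $v_d(M\cap D(z,r))\lesssim(1-|z|^2)^{d}$ (a complex $d$-ball of Euclidean radius $\sim(1-|z|^2)^{1/2}$), not $(1-|z|^2)^{d/2}$; this still falls short of the required $(1-|z|^2)^{d+1}$, so your point stands.
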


\begin{proof}
Fix~$R>0$, by Lemma~\ref{4equivalent}, we only need to prove that
$$\int_{D(z,R)\cap M_s}(1-|w|^2)^{n-d}dv_d(w)\leq C(1-|z|^2)^{n+1}$$
for some constant~$C>0$. Since
$$\frac{1-|w|^2}{1-|z|^2}=\frac{1-|\varphi_z(w)|^2}{|1-\langle\varphi_z(w),z\rangle|^2}\leq C,$$
it suffices to show
$$v_d(D(z,R))\leq C(1-|z|^2)^{d+1}.$$

Since
$$v_d(D(z,R))=\int_{\{w':w\in D(z,R)\}}E(w')dv(w')\leq C\int_{\{w':w\in D(z,R)\}}dv(w').$$
By definition, $|\varphi_{z'}(w')|\leq|\varphi_z(w)|$, so
$$\{w':w\in D(z,R)\}\subset D_d(z',R).$$
Therefore
$$v_d(D(z,R))\leq C(1-|z|^2)^{d+1}.$$

This completes the proof.
\end{proof}

\begin{lem}\label{delta}
There exists a constant~$C>0$~such that
$$T_{\delta}^3>CT_{\delta}.$$
\end{lem}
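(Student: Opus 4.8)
The plan is to compute $T_{\delta}$ explicitly and to observe that it is a \emph{positive operator of finite rank}, so that $0$ is an isolated point of its spectrum; the inequality then drops out of the spectral theorem. (If $\tilde{M}$ has no singular point inside $\bn$ then $m=0$, $\delta=0$, and there is nothing to prove, so assume $m\ge 1$.)

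First I would unwind the definition of $T_{\nu}$ for $\nu=\delta=\sum_{i=1}^{m}(1-|z_i|^2)^{n+1}\delta_{z_i}$. Writing $K_{z_i}(w)=(1-\langle w,z_i\rangle)^{-(n+1)}$ for the (unnormalized) Bergman kernel and using the reproducing property $f(z_i)=\langle f,K_{z_i}\rangle$, one obtains
$$T_{\delta}f=\sum_{i=1}^{m}(1-|z_i|^2)^{n+1}\,\langle f,K_{z_i}\rangle\,K_{z_i}=\sum_{i=1}^{m}\langle f,k_{z_i}\rangle\,k_{z_i},$$
because $\|K_{z_i}\|^{2}=K_{z_i}(z_i)=(1-|z_i|^2)^{-(n+1)}$, so that $k_{z_i}:=(1-|z_i|^2)^{(n+1)/2}K_{z_i}$ is a unit vector. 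Thus $T_{\delta}$ is a finite sum of rank-one positive operators, hence a positive operator whose range lies in the finite-dimensional space $\mathrm{span}\{k_{z_1},\dots,k_{z_m}\}$.

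Being self-adjoint, positive, and of finite rank, $T_{\delta}$ has spectrum $\{0\}\cup\{\lambda_1,\dots,\lambda_r\}$ with each $\lambda_j>0$; set $\lambda:=\min_j\lambda_j>0$. The function $g(t)=t^{3}-\lambda^{2}t=t(t-\lambda)(t+\lambda)$ is nonnegative on $\{0\}\cup[\lambda,\infty)\supseteq\sigma(T_{\delta})$, so the continuous functional calculus gives $T_{\delta}^{3}-\lambda^{2}T_{\delta}=g(T_{\delta})\ge 0$, i.e. $T_{\delta}^{3}\ge C\,T_{\delta}$ with $C=\lambda^{2}>0$.

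The only step needing care is the bookkeeping in the computation of $T_{\delta}$: one must check that the weights $(1-|z_i|^2)^{n+1}$ attached to the point masses are exactly what is needed to normalize the reproducing kernels, which is what makes $T_{\delta}$ a sum of rank-one operators built from unit vectors; beyond that the argument is automatic. (Strictly, $T_{\delta}^{3}-C\,T_{\delta}$ is only positive semidefinite, not strictly positive, since $T_{\delta}$ is not injective; the symbol ``$>$'' in the statement is to be read as ``$\ge$'' with a positive constant $C$, which is all that is needed.)
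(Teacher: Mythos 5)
Your proof is correct and follows essentially the same route as the paper, which simply observes that $T_{\delta}$ is positive with closed range (being finite rank) and deduces the inequality; your explicit computation of $T_{\delta}$ as a sum of rank-one operators on the normalized kernels $k_{z_i}$ and the functional-calculus step with $g(t)=t^3-\lambda^2 t$ is the standard way to spell that out. Your parenthetical about reading ``$>$'' as ``$\ge$'' is also the right reading of the statement.
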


\begin{proof}
The lemma follows from the fact that~$T_{\delta}$~has closed range and is positive.
\end{proof}

\begin{lem}\label{s3}
For any~$\epsilon>0$, there exists~$1>s_3>s_1$~and~$R>0$~such that,
\begin{itemize}
\item[(1)]
$$\sup_{z\in M_{s_3}}\int_{M_{s_3}}\frac{(1-|z|^2)^{\frac{n-d}{2}}(1-|w|^2)^{\frac{n-d}{2}}}{|1-\langle z,w\rangle|^{n+1}}dv_d(w)<\infty.$$
\item[(2)] $\forall z\in M_{s_3}$,
$$\int_{M_{s_3}\backslash D(z,R)}\frac{(1-|z|^2)^{\frac{n-d}{2}}(1-|w|^2)^{\frac{n-d}{2}}}{|1-\langle z,w\rangle|^{n+1}}dv_d(w)<\epsilon.$$
\end{itemize}
\end{lem}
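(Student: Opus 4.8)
The plan is to deduce both (1) and (2) from a single \emph{master estimate}: once $s<1$ is taken large enough that $M_s\subset\bigcup_i V_i$ and $M_s\cap\Sigma=\emptyset$, then for all $a\ge 0$ and all $b$ with $b-d-1-a>0$,
\begin{equation}\label{master}
\sup_{z\in M_s}\,(1-|z|^2)^{\,b-d-1-a}\int_{M_s}\frac{(1-|w|^2)^a}{|1-\langle z,w\rangle|^{\,b}}\,dv_d(w)<\infty .
\end{equation}
Granting \eqref{master}, part (1) is exactly the case $a=\tfrac{n-d}{2}$, $b=n+1$ (so $b-d-1-a=\tfrac{n-d}{2}>0$ since $d<n$), after multiplying through by $(1-|z|^2)^{(n-d)/2}$.

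For part (2), write $\rho=\rho(z,w)$ and note that
$$\frac{(1-|z|^2)^{(n-d)/2}(1-|w|^2)^{(n-d)/2}}{|1-\langle z,w\rangle|^{n+1}}=\frac{(1-\rho^2)^{(n-d)/2}}{|1-\langle z,w\rangle|^{d+1}};$$
on $M_s\backslash D(z,R)$ one has $1-\rho^2\le 1-s_R^2$, so this quantity is at most
$$(1-s_R^2)^{(n-d)/4}\,(1-\rho^2)^{(n-d)/4}\,|1-\langle z,w\rangle|^{-(d+1)}=(1-s_R^2)^{(n-d)/4}\,\frac{(1-|z|^2)^{(n-d)/4}(1-|w|^2)^{(n-d)/4}}{|1-\langle z,w\rangle|^{\,d+1+(n-d)/2}}.$$
Integrating over $M_s\backslash D(z,R)$ and applying \eqref{master} with $a=\tfrac{n-d}{4}$, $b=d+1+\tfrac{n-d}{2}$ (so $b-d-1-a=\tfrac{n-d}{4}>0$) gives $\int_{M_s\backslash D(z,R)}(\cdots)\,dv_d\le C(1-s_R^2)^{(n-d)/4}$, which tends to $0$ as $R\to\infty$. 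Hence one first picks $s_3<1$ with $M_{s_3}\subset\bigcup_i V_i$ (possible because $\bigcup_i V_i$ is open in $\tilde M$ and contains the compact set $\pbn\cap\tilde M$, so its complement in $\tilde M$ meets $\clb$ in a compact subset of $\bn$) and $s_3>s_1$, and then $R$ large; this yields both conclusions.

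To prove \eqref{master}, fix $z\in M_s$, say $z\in V_i$, and pass to the $z$-adapted orthonormal basis $\{f_1^z,\dots,f_n^z\}$ of the earlier construction, in which $z=(z_1,0,\dots,0)$ with $|z_1|=|z|$ and, by the graph representation established above, $M\cap B(z,\delta)=\{(w',F^z(w')):w'\in\Omega'\}$ with $w'=(w_1,\dots,w_d)$. Two features of these coordinates do the work: $\langle z,w\rangle=z_1\overline{w_1}$ depends only on $w_1$, and $dv_d=E(w')\,dv(w')$ with $E$ bounded (it is continuous in $(z,w)$ on a compact set). Split $M_s=(M_s\cap B(z,\delta))\cup(M_s\backslash B(z,\delta))$. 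On the second piece $|w-z|\ge\delta$, whence $|1-\langle z,w\rangle|\ge\tfrac12|w-z|^2\ge\delta^2/2$, so the corresponding integral is at most $(\delta^2/2)^{-b}v_d(M_s)$, which is finite because $M_s$ is covered by the finitely many charts $U_i$, each of finite $v_d$-measure; this is $\lesssim(1-|z|^2)^{-(b-d-1-a)}$ since that exponent is positive. On the first piece, $w_1$ ranges in $\mathbb{B}_1$ (as $|w_1|\le|w|<1$), and for each fixed $w_1$ the slice $\{(w_2,\dots,w_d):(w',F^z(w'))\in B(z,\delta)\}$ lies in $\{|w_2|^2+\dots+|w_d|^2<1-|w_1|^2\}$ (from $|w|<1$), hence has $2(d-1)$-volume $\lesssim(1-|w_1|^2)^{d-1}$; moreover $1-|w|^2\le1-|w_1|^2$ there. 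Integrating out $w_2,\dots,w_d$ and using $E\lesssim1$,
$$\int_{M_s\cap B(z,\delta)}\frac{(1-|w|^2)^a}{|1-\langle z,w\rangle|^{\,b}}\,dv_d(w)\ \lesssim\ \int_{\mathbb{B}_1}\frac{(1-|w_1|^2)^{\,a+d-1}}{|1-z_1\overline{w_1}|^{\,b}}\,dv(w_1)\ \approx\ (1-|z|^2)^{-(b-d-1-a)},$$
the last relation being the one-dimensional case ($n=1$) of Lemma \ref{1.4.10 Rudin}, with parameters $t=a+d-1>-1$ and $c=b-2-(a+d-1)=b-d-1-a>0$. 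This proves \eqref{master}.

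The technical heart — and the main obstacle — is the coordinate reduction in this last display. It is the transversality hypothesis, encoded in the $z$-adapted graph representation, that makes the kernel $|1-\langle z,w\rangle|$ depend on the single variable $w_1$, while the constraint $|w|<1$ forces the transverse slices to shrink like $(1-|w_1|^2)^{d-1}$; together these collapse the $d$-dimensional integral over the curved set $M$ to a one-dimensional integral over $\mathbb{B}_1$, where Lemma \ref{1.4.10 Rudin} applies verbatim. Everything else — the elementary bound $|1-\langle z,w\rangle|\ge\delta^2/2$ off $B(z,\delta)$, the finiteness of $v_d(M_{s_1})$, and the choice of the exponent $\tfrac{n-d}{4}$ in the cutoff estimate for (2) — is routine; note in particular that Lemma \ref{s2} is not needed here, only the graph representation.
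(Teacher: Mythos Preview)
Your argument is correct and in several respects cleaner than the paper's own proof. Both proofs split into a near piece $M_s\cap B(z,\delta)$ and a far piece $M_s\setminus B(z,\delta)$, handling the latter via the elementary bound $|1-\langle z,w\rangle|\ge\tfrac12|z-w|^2\ge\delta^2/2$. The differences are in the near piece and in the passage from (1) to (2).

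For the near piece, the paper passes to the $d$-dimensional chart $w\mapsto w'\in\mathbb{B}_d$, then performs the M\"obius change of variable $w'=\varphi_{z'}(\eta')$ to reach the kernel $(1-|\eta'|^2)^{(n-d)/2}/|1-\langle z',\eta'\rangle|^{d+1}$, and finally invokes Lemma~\ref{integral on Bd}. You instead push one step further: since $\langle z,w\rangle=z_1\overline{w_1}$ depends only on $w_1$, you integrate out $w_2,\dots,w_d$ (using that the slice has volume $\lesssim(1-|w_1|^2)^{d-1}$) and land directly on a one-dimensional Forelli--Rudin integral, to which Lemma~\ref{1.4.10 Rudin} applies verbatim. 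This bypasses both the M\"obius substitution and Lemma~\ref{integral on Bd}, and packages everything into the single ``master estimate'' \eqref{master} with adjustable exponents $a,b$.

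For part (2), the paper needs an additional geometric \emph{Claim}: that $w\notin D(z,R)$ forces $|\varphi_{z'}(w')|\ge c\,s_R$ for some $c>0$ depending only on $M$ (comparing the $n$-dimensional and $d$-dimensional pseudohyperbolic radii), after which Lemma~\ref{integral on Bd} finishes. Your route avoids this entirely: writing the integrand as $(1-\rho^2)^{(n-d)/2}/|1-\langle z,w\rangle|^{d+1}$ and splitting the exponent lets you peel off the explicit factor $(1-s_R^2)^{(n-d)/4}$ and absorb the rest into \eqref{master} with $a=(n-d)/4$, $b=d+1+(n-d)/2$. This is more elementary, gives a quantitative rate in $R$, and shows that $s_3$ can be chosen once (independently of $\epsilon$), with only $R$ depending on $\epsilon$ --- a slight sharpening of the statement. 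The one place to tighten the exposition is the finiteness of $v_d(M_s)$: it holds because $\overline{M_s}$ is compact in the regular part of $\tilde M$, not merely because the $U_i$ are charts.
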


\begin{proof}
We prove (1) and (2) together. For~$z\in\bar{V_i}\cap M_{s_1}$~and~$R>0$,
\begin{eqnarray*}
& &\int_{M_{s_3}}\frac{(1-|z|^2)^{\frac{n-d}{2}}(1-|w|^2)^{\frac{n-d}{2}}}{|1-\langle z,w\rangle|^{n+1}}dv_d(w)\\
&\leq&\int_{B(z,\delta)\cap M_{s_3}}\frac{(1-|z|^2)^{\frac{n-d}{2}}(1-|w|^2)^{\frac{n-d}{2}}}{|1-\langle z,w\rangle|^{n+1}}dv_d(w)\\
&+&\int_{M_{s_3}\backslash B(z,\delta)}\frac{(1-|z|^2)^{\frac{n-d}{2}}(1-|w|^2)^{\frac{n-d}{2}}}{|1-\langle z,w\rangle|^{n+1}}dv_d(w)
\end{eqnarray*}
For the second part, the integrand is smaller than~$C\delta^{-2(n+1)}(1-s_3^2)^{n-d}$~because
$$|1-\langle z,w\rangle|\geq(1-Re\langle z,w\rangle)\geq\frac{1}{2}(|z|^2+|w|^2-2Re\langle z,w\rangle)=\frac{1}{2}|z-w|^2.$$
So when~$s_3$~is close to~$1$, the second part will be smaller than~$\frac{1}{2}\epsilon$.

For the first part,
\begin{eqnarray*}
&&\int_{B(z,\delta)\cap M_{s_3}}\frac{(1-|z|^2)^{\frac{n-d}{2}}(1-|w|^2)^{\frac{n-d}{2}}}{|1-\langle z,w\rangle|^{n+1}}dv_d(w)\\
&=&\int_{\{w':w\in B(z,\delta)\cap M_{s_3}\}}\frac{(1-|z'|^2)^{\frac{n-d}{2}}(1-|w|^2)^{\frac{n-d}{2}}}{|1-\langle z',w'\rangle|^{n+1}}E(w')dv(w')\\
&\leq&C\int_{\mathbb{B}_d}\frac{(1-|z'|^2)^{\frac{n-d}{2}}(1-|w'|^2)^{\frac{n-d}{2}}}{|1-\langle z',w'\rangle|^{n+1}}dv(w')\\
&=&C\int_{\mathbb{B}_d}\frac{(1-|z'|^2)^{\frac{n-d}{2}}(1-|\varphi_{z'}(\eta')|^2)^{\frac{n-d}{2}}}{|1-\langle z',\varphi_{z'}(\eta')\rangle|^{n+1}}\frac{(1-|z'|^2)^{d+1}}{|1-\langle z',\eta'\rangle|^{2(d+1)}}dv(\eta')\\
&=&C\int_{\mathbb{B}_d}\frac{(1-|\eta'|^2)^{\frac{n-d}{2}}}{|1-\langle z',\eta'\rangle|^{d+1}}dv(\eta').
\end{eqnarray*}
Where the second equality from the bottom is by change of variable~$w'=\varphi_{z'}(\eta')$. By the proof of Lemma~\ref{integral on Bd}, the integral above is uniformly bounded, this proves (1).

The above argument also gives
\begin{eqnarray*}
&&\int_{M_{s_3}\cap B(z,\delta)\backslash D(z,R)}\frac{(1-|z|^2)^{\frac{n-d}{2}}(1-|w|^2)^{\frac{n-d}{2}}}{|1-\langle z,w\rangle|^{n+1}}dv_d(w)\\
&\leq&\int_{\{\varphi_{z'}(w'):w\in B(z,\delta)\cap M_{s_3}\backslash D(z,R)\}}\frac{(1-|\eta'|^2)^{\frac{n-d}{2}}}{|1-\langle z',\eta'\rangle|^{d+1}}dv(\eta').
\end{eqnarray*}
Claim: There exists~$c>0$~such that for any~$R>0$,
$$\{\varphi_{z'}(w'):w\in B(z,\delta)\cap M_{s_3}\backslash D(z,R)\}\cap cs_R\mathbb{B}_d=\emptyset.$$
Assume the claim, then (2) follows from Lemma~\ref{integral on Bd}.

Now we prove the claim. For~$z\in M_{s_1}$, $w\in B(z,\delta)$, let~$\eta=\varphi_z(w)$, $\eta'$~be the first~$d$~entries of~$\eta$. Then~$\eta'=\varphi_{z'}(w')$.
\begin{eqnarray*}
|\eta|^2-|\eta'|^2&=&\frac{1-|z|^2}{|1-\langle w,z\rangle|^2}\sum_{i=d+1}^n|f_i^z(w')|^2\\
&\leq& C\frac{1-|z'|^2}{|1-\langle w',z'\rangle|^2}|w'-z'|^2\\
&\leq& C\bigg(\frac{1}{|1-\langle w',z'\rangle|^2}|z_1-w_1|^2+\sum_{i=2}^d\frac{1-|z'|^2}{|1-\langle w',z'\rangle|^2}|w_i|^2\bigg)\\
&=&C|\eta'|^2.
\end{eqnarray*}
Thus
$$|\eta|^2\leq(C+1)|\eta'|^2.$$
If~$w\notin D(z,R)$, then~$|\eta|=|\varphi_z(w)|\geq s_{\sss R}$. Therefore~$|\eta'|\geq\frac{1}{\sqrt{C+1}}s_{\sss R}$. Take~$c=\frac{1}{\sqrt{C+1}}$~and the proof is complete.
\end{proof}

\begin{proof}{\textbf{proof of Theorem~\ref{manifold}}}
First, we prove the theorem under the assumption that $M$ is connected. Then the dimension of $M$ at every regular point is the same. Let $0<d<n$ be the dimension.

Let~$\epsilon>0$~be determined later. Let~$R>0$~and~$s_3$, $s_2$~be  as in Lemma~\ref{s3}~and Lemma~\ref{s2}. Let~$s=\max\{s_2, s_3\}$~and~$1>s'>s$~be such that~$\forall z\in M_{s'}$, $D(z,2R)\cap M\subset M_s$. We may enlarge~$s$~(and the associated~$s'$~)in the proof and still denote it by~$s$~(~$s'$ ).

We will prove that~$T_{\mu_s}^3\geq cT_{\mu_s}$~for some~$c>0$. Since~$T_{\mu_s}$~is self-adjoint and~$\ker T_{\mu_s}=P$. $T_{\mu_s}$ is bounded below on~$Q$. This will give us the desired result, by Theorem~\ref{main}.

Denote $\mathcal{P}(\mu_s)$ to be the closure of the restriction of all analytic polynomials to $M$ in~$L^2(\mu_s)$~. Clearly $Range R\subset\mathcal{P}(\mu_s)$. Suppose $s'<t<1$, for every $z\in M_{s'}^t$, there is an open neighborhood $U\ni z$ contained in $M_s$ and doesn't touch $\pbn$ such that $M$ has local coordinates on $U$. It's easy to prove that for a compact set $V\subset U$, there is a constant $C>0$ such that $\forall p\in \mathbb{C}[z_1,\cdots,z_n]$, $\forall z\in V$,
$$|p(z)|^2\leq C\int_U|p(w)|^2dv_d(w)\leq C'\int_M|p(w)|^2d\mu_s(w).$$
Clearly the same is true for $z=z_i, i=1,\cdots,m$.
Suppose $K\subset M$ is compact, then $K$ is contained in $M^t$ for some $t<1$. Assume $t>s'$, we can cover $\overline{M_{s'}^t}$ with finite compact neighborhoods $V_i$ as above. So there is a constant $C>0$ such that for any analytic polynomial $p$, $\forall z\in M_{s'}^t\cup \Sigma$,

$$
|p(z)|^2\leq C\int_M|p(w)|^2d\mu_s(w).
$$
For $z\in M^{s'}\backslash\Sigma$, using the maximum modulus principle, we have
$$
|p(z)|^2\leq\sup_{w\in M_{s'}^t\cup\Sigma}|p(w)|^2\leq C\int_M|p(w)|^2d\mu_s(w).
$$
This means the evaluation at every point in $M$ is bounded on $\mathcal{P}(\mu_s)$. Therefore we can think of $f\in\mathcal{P}(\mu_s)$ as a pointwisely defined function on $M$(instead of an equivalence class in $L^2(\mu_s)$). Also, it's easy to prove that under this definition, $\forall f\in\ber, \forall z\in M, Rf(z)=f(z)$.

In conclusion, the space~$\mathcal{P}(\mu_s)$~is a reproducing kernel Hilbert space on~$M$, and the reproducing kernels on any compact subset are uniformly bounded.

Consider the operator
$$T:\mathcal{P}(\mu_s)\to L^2(\mu_s),~Tf=f\chi_{\sss M_s^{s'}}.$$
Then~$T$~is compact: suppose $\{f_k\}\subset \mathcal{P}(\mu_s)$ and $f_k$ weakly converges to $0$. Then $f_k$ converges to $0$ pointwisely and are uniformly bounded on $M_s^{s'}$. By the above argument and the dominance convergence theorem,
$$
\|Tf_k\|^2=\int_{M_s^{s'}}|f(w)|^2d\mu_s(w)\to0.
$$
So $T$ is compact, therefore $|T|$ is compact.
Since $\|Tf\|=\||T|f\|$, $\forall f\in\mathcal{P}(\mu_s)$. Using the spectral decomposition of $|T|$, we see that for any~$0<a<1$, there exists a finite codimensional subspace~$L\subset \mathcal{P}(\mu_s)$, such that~$\forall f\in L$,
$$\int_{M_s^{s'}}|f|^2d\mu_s\leq(1-a)\int_M|f|^2d\mu_s,$$
so
$$\int_M|f|^2d\mu_{s'}\geq a\int_M|f|^2d\mu_s.$$
We will use this in the last part of our proof.

Define the operator
$$\tTmus: \Ps\to\Ps$$
$$\tTmus f(z)=\int_Mf(w)\frac{1}{(1-\langle z,w\rangle)^{n+1}}d\mu_s(w),~\forall z\in M$$
By definition, $\forall F\in\ber$, $\tTmus Rf=R\Tmus F$. Since
$$
\|R\Tmus F\|_{\mu_s}^2=\langle \Tmus^3 F, F\rangle\leq\|\Tmus\|^2\langle\Tmus F, F\rangle=\|RF\|_{\mu_s}^2
$$
$\tTmus$ is bounded on $\Ps$. We will show that $\tTmus$ is bounded below.

For~$z\in M_{s'}$, $f=RF\in\Ps$, $F\in\ber$,
$$\tTmus f(z)=\int_{\Sigma}f(w)K_w(z)d\mu_s(w)+\int_{M_s}f(w)\frac{(1-|w|^2)^{n-d}}{(1-\langle z,w\rangle)^{n+1}}dv_d(w).$$
Consider the map~$p_z: D(z,2R)\cap M\to TM|_z$~defined before Lemma~\ref{s2}, by (4) of Lemma~\ref{s2}, by enlarging~$s$, we could assume~$\beta(p_z(w),w)<\frac{1}{2}R$, $\forall w\in D(z,2R)$.
Therefore
$$p_z(D(z,2R)\cap M)\supset D(z,\frac{3}{2}R)\cap TM|_z$$
and
$$p_z^{-1}(D(z,\frac{3}{2}R)\cap TM|_z)\supset D(z,R)\cap M.$$
Define

$$
I(z)=
\begin{cases}
\int_{\Sigma}f(w)K_w(z)d\mu_s(w) & z\in\Sigma\\
\int_{p_z^{-1}(D(z,\frac{3}{2}R)\cap TM|_z)}f(w)\frac{(1-|w|^2)^{n-d}}{(1-\langle z,w\rangle)^{n+1}}dv_d(w) & z\in M_{s'}
\end{cases}
$$
and
$$
II(z)=\tTmus f(z)-I(z)
$$
Then~$I(z)+II(z)=\tTmus f(z)$, $\forall z\in M_{s'}\cup\Sigma$.

For~$I(z)$,
$$\int_{M}|I(z)|^2d\mu_{s'}=\langle T_{\delta}^3F,F\rangle+\int_{M_{s'}}|I(z)|^2(1-|z|^2)^{n-d}dv_d(z).$$

By Lemma~\ref{delta}, the first part is greater than~$c\langle T_{\delta}F,F\rangle=c\int_M|f(w)|^2d\delta$.

If~$z\in M_{s'}$,
\begin{eqnarray*}
I(z)&=&\int_{p_z^{-1}(D(z,\frac{3}{2}R)\cap TM|_z)}f(w)\frac{(1-|w|^2)^{n-d}}{(1-\langle z,w\rangle)^{n+1}}dv_d(w)\\
&=&\int_{D(z,\frac{3}{2}R)\cap TM|_z}fp_z^{-1}(\eta)\frac{(1-|p_z^{-1}(\eta)|^2)^{n-d}}{(1-\langle z,\eta\rangle)^{n+1}}\frac{E(w')}{E(z')}dv(\eta)\\
&=&\int_{D(z,\frac{3}{2}R)\cap TM|_z}fp_z^{-1}(\eta)\frac{(1-|\eta|^2)^{n-d}}{(1-\langle z,\eta\rangle)^{n+1}}g(\eta)dv(\eta)
\end{eqnarray*}
where
$$g(\eta)=\frac{(1-|p_z^{-1}(\eta)|^2)^{n-d}}{(1-|\eta|^2)^{n-d}}\frac{E(w')}{E(z')}.$$
By Lemma~\ref{s2}~(3) and the absolute continuity of~$E$, we could enlarge~$s$~(so that the Euclidian size of~$D(z,2R)$~is small enough) such that~$g(\eta)$~is sufficiently close to~$1$~and
$$|g(\eta)-1|\leq\epsilon g(\eta).$$
By Lemma~\ref{integral on affine space},
$$\int_{D(z,\frac{3}{2}R)\cap TM|_z}fp_z^{-1}(\eta)\frac{(1-|\eta|^2)^{n-d}}{(1-\langle z,\eta\rangle)^{n+1}}dv(\eta)=C_zf(z),$$
where~$C_z\geq C_{\frac{3}{2}R}$. And
\begin{eqnarray*}
&&|\int_{D(z,\frac{3}{2}R)\cap TM|z}fp_z^{-1}(\eta)\frac{(1-|\eta|^2)^{n-d}}{(1-\langle z,\eta\rangle)^{n+1}}(g(\eta)-1)dv(\eta)|\\
&\leq&\epsilon\int_{D(z,\frac{3}{2}R)\cap TM|_z}|fp_z^{-1}(\eta)|\frac{(1-|\eta|^2)^{n-d}}{|1-\langle z,\eta\rangle|^{n+1}}g(\eta)dv(\eta)\\
&\leq&\epsilon\int_{M_{s}}|f(w)|\frac{(1-|w|^2)^{n-d}}{|1-\langle z,w\rangle|^{n+1}}dv_d(w)
\end{eqnarray*}

So
\begin{eqnarray*}
&&\int_{M_{s'}}|I(z)|^2(1-|z|^2)^{n-d}dv_d(z)\\
&\geq&\frac{1}{2}C_{\frac{3}{2}R}^2\int_{M_{s'}}|f(z)|^2(1-|z|^2)^{n-d}dv_d(z)\\
& &-\epsilon^2\int_{M_{s'}}\bigg(\int_{M_s}|f(w)|\frac{(1-|w|^2)^{n-d}}{|1-\langle z,w\rangle|^{n+1}}dv_d(w)\bigg)^2(1-|z|^2)^{n-d}dv_d(z)
\end{eqnarray*}
Using Holder's inequality and Lemma~\ref{s3} (1), the second part is smaller than
\begin{eqnarray*}
&&\epsilon^2\int_{M_{s'}}\bigg(\int_{M_s}\frac{(1-|w|^2)^{\frac{n-d}{2}}(1-|z|^2)^{\frac{n-d}{2}}}{|1-\langle z,w\rangle|^{n+1}}dv_d(w)\bigg)\\
& &~~\cdot\bigg(\int_{M_s}|f(w)|^2\frac{(1-|w|^2)^{\frac{3(n-d)}{2}}(1-|z|^2)^{\frac{n-d}{2}}}{|1-\langle z,w\rangle|^{n+1}}dv_d(w)\bigg)dv_d(z)\\
&\leq&C\epsilon^2\int_{M_s}\int_{M_{s'}}\frac{(1-|w|^2)^{\frac{n-d}{2}}(1-|z|^2)^{\frac{n-d}{2}}}{|1-\langle z,w\rangle|^{n+1}}dv_d(z)\\
&&~~~~~~~|f(w)|^2(1-|w|^2)^{n-d}dv_d(w)\\
&\leq&C^2\epsilon^2\int_{M_s}|f(w)|^2(1-|w|^2)^{n-d}dv_d(w)\\
&\leq&C^2\epsilon^2\int_M|f|^2d\mu_s.
\end{eqnarray*}
The above estimation is inspired from \cite{JMT}. We will use the same kind of argument in the estimation of $II(z)$.
Combining the above, we have
$$\int_M|I(z)|^2d\mu_{s'}\geq C_1\int_M|f|^2d\mu_{s'}-C_2\epsilon^2\int_M|f|^2d\mu_s.$$

Next we estimate~$II(z)$.
\begin{eqnarray*}
&&\int_M|II(z)|^2d\mu_{s'}(z)\\
&=&\sum_{i=1}^m\bigg|\int_{M_s}f(w)\frac{(1-|w|^2)^{n-d}}{(1-\langle z_i,w\rangle)^{n+1}}dv_d(w)\bigg|^2(1-|z_i|^2)^{n+1}+\int_{M_{s'}}\bigg|T_{\delta}F(z)+\\
& &\int_{M_s\backslash p_z^{-1}(D(z,\frac{3}{2}R)\cap TM|_z)}f(w)\frac{(1-|w|^2)^{n-d}}{(1-\langle z,w\rangle)^{n+1}}dv_d(w)\bigg|^2(1-|z|^2)^{n-d}dv_d(z)\\
&\leq&A+2B+2C.
\end{eqnarray*}
Where
$$A=\sum_{i=1}^m\bigg|\int_{M_s}f(w)\frac{(1-|w|^2)^{n-d}}{(1-\langle z_i,w\rangle)^{n+1}}dv_d(w)\bigg|^2(1-|z_i|^2)^{n+1},$$
$$B=\int_{M_{s'}}|T_{\delta}f(z)|^2(1-|z|^2)^{n-d}dv_d(z),$$
and
$$C=\int_{M_{s'}}\bigg|\int_{M_s\backslash p_z^{-1}(D(z,\frac{3}{2}R)\cap TM|_z)}f(w)\frac{(1-|w|^2)^{n-d}}{(1-\langle z,w\rangle)^{n+1}}dv_d(w)\bigg|^2(1-|z|^2)^{n-d}dv_d(z).$$

Let $a=d(\Sigma,M_s)$, we have

\begin{eqnarray*}
A&\leq&(1/2a^2)^{-2(n+1)}\sum_{i=1}^m(1-|z_i|^2)^{n+1}\bigg(\int_{M_s}|f(w)|(1-|w|^2)^{n-d}dv_d(w)\bigg)^2\\
&\leq&C\bigg(\int_{M_s}|f(w)|^2(1-|w|^2)^{n-d}dv_d(w)\bigg)\bigg(\int_{M_s}(1-|w|^2)^{n-d}dv_d(w)\bigg)\\
&\leq&C(1-s^2)^{n-d}\int_M|f(w)|^2d\mu_s(w)
\end{eqnarray*}
where the first inequality is because
$$
|1-\langle z_i,w\rangle|\geq 1-Re\langle z_i,w\rangle\geq1/2(|z_i|^2+|w|^2-Re\langle z_i,w\rangle)=1/2|z_i-w|^2
$$
and the second inequality is by Holder's inequality.
By taking $s$ closer to $1$, we could make $$A\leq\epsilon^2\int_M|f(w)|^2d\mu_s(w).$$

Similar argument will give us
$$
B\leq\epsilon^2\int_M|f(w)|^2d\mu_s(w).
$$

Now we estimate $C$.
\begin{eqnarray*}
C&\leq&\int_{M_{s'}}\bigg|\int_{M_s\backslash D(z,R)}f(w)\frac{(1-|w|^2)^{n-d}}{(1-\langle z,w\rangle)^{n+1}}dv_d(w)\bigg|^2(1-|z|^2)^{n-d}dv_d(z)\\
&\leq&\int_{M_{s'}}\bigg(\int_{M_s\backslash D(z,R)}\frac{(1-|w|^2)^{\frac{n-d}{2}}(1-|z|^2)^{\frac{n-d}{2}}}{|1-\langle z,w\rangle|^{n+1}}dv_d(w)\bigg)\\
& &\cdot\bigg(\int_{M_s\backslash D(z,R)}|f(w)|^2\frac{(1-|w|^2)^{\frac{3(n-d)}{2}}}{|1-\langle z,w\rangle|^{n+1}}dv_d(w)\bigg)(1-|z|^2)^{\frac{n-d}{2}}dv_d(z)\\
&\leq&\epsilon\int_{M_s}\bigg(\int_{M_{s'}\backslash D(w,R)}\frac{(1-|w|^2)^{\frac{n-d}{2}}(1-|z|^2)^{\frac{n-d}{2}}}{|1-\langle z,w\rangle|^{n+1}}dv_d(z)\bigg)\\
& &\cdot|f(w)|^2(1-|w|^2)^{n-d}dv_d(w)\\
&\leq&\epsilon^2\int_{M_s}|f(w)|^2(1-|w|^2)^{n-d}dv_d(w).
\end{eqnarray*}

Combining the three inequalities, we get
$$\int_M|II(z)|^2d\mu_{s'}(z)\leq5\epsilon^2\int_M|f|^2d\mu_s.$$
Finally, we have
\begin{eqnarray*}
\int_M|\tTmus f(z)|^2d\mu_s(z)&\geq&\int_M|\tTmus f(z)|^2d\mu_{s'}(z)\\
&\geq&\frac{1}{2}\int_M|I(z)|^2d\mu_{s'}(z)-\int_M|II(z)|^2d\mu_{s'}(z)\\
&\geq&C\int_M|f|^2d\mu_{s'}-C'\epsilon^2\int_M|f|^2d\mu_s
\end{eqnarray*}
This holds for all $f\in\Ps$. From the argument in the beginning, we can find a finite codimensional space~$L\subset \mathcal{P}(\mu_s)$~such that~$\forall f\in L$, $$\int_M|f|^2d\mu_{s'}>\frac{1}{2}\int_M|f|^2d\mu_s.$$
Therefore $\forall f\in L$,
$$
\int_M|\tTmus f(z)|^2d\mu_s(z)\geq(\frac{1}{2}C-C'\epsilon^2)\int_M|f|^2d\mu_s
$$
Take~$\epsilon>0$~such that~$\alpha=\frac{1}{2}C-C'\epsilon^2>0$. Then
$$
\|\tTmus f\|_{\mu_s}^2\geq\alpha\|f\|_{\mu_s}^2,~~~\forall f\in L.
$$
Next we show that $\ker\tTmus=\{0\}$. Consider the commuting diagram
$$
\xymatrix{
Q\ar[r]^{\Tmus}\ar[d]^{R}&Q\ar[d]^{R}\\
\Ps\ar[r]^{\tTmus}&\Ps
}
$$
Since $\tTmus$ is positive, it suffices to show that $Range\tTmus$ is dense in $\Ps$. We already know that $Range\Tmus$ is dense in $Q$( since $\ker\Tmus=\{0\}$). Therefore $R\Tmus(Q)$ is dense in $R(Q)$, which is dense in $\Ps$. So $Range\tTmus\supset R\Tmus(Q)$ is dense in $\Ps$. Hence $\ker\tTmus=\{0\}$.

Now suppose $\tTmus$ is not bounded below, then there exists a pairwise orthogonal sequence $\{f_n\}\subset\Ps$, $\|f_n\|_{\mu_s}=1$ such that $\|\tTmus(f_n)\|_{\mu_s}\to0$, $n\to\infty$. Since $L$ is finite codimensional,
$$
\|f_n-Lf_n\|_{\mu_s}\to0, n\to\infty.
$$
But
$$
\|\tTmus Lf_n\|_{\mu_s}\leq\|\tTmus f_n\|_{\mu_s}+\|\tTmus(f_n-Lf_n)\|_{\mu_s}\to0,~~~n\to\infty,
$$
a contradiction. So $\tTmus$ is bounded below.

Suppose
$$\|\tTmus f\|_{\mu_s}^2\geq c\|f\|_{\mu_s}^2,~~~\forall f\in\Ps,$$
then $\forall F\in\ber$,
$$
\langle\Tmus^3 F, F\rangle=\|\tTmus RF\|_{\mu_s}^2\geq c^2\|RF\|_{\mu_s}^2=c^2\langle\Tmus F, F\rangle.
$$
This means $\Tmus^3\geq c^2\Tmus$, which implies that $\Tmus$ is bounded below on $Q$. Therefore $\|\dot\|_{\mu_s}$ and $\|\dot\|$ are equivalent on $Q$. This completes the proof when $\tilde{M}$ is connected.

If $\tilde{M}$ is not connected, then by the theorem in \cite[Page 52]{Complex analytic sets}, after restricting it to a smaller neighborhood of $\clb$, we can divide $\tilde{M}$ into finitely many connected components, each two having positive Euclidian distance(although they may have different dimensions). Then we divide $II(z)$ into more parts, the rest of the proof remains unchanged.
\end{proof}

\begin{cor}\label{toe}
Suppose $M$ satisfies the properties of Theorem \ref{manifold}, then the projection operator onto $P$ is in the Toeplitz algebra $\toe$.
\end{cor}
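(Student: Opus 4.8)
The plan is simply to assemble two results already in hand: the construction buried inside the proof of Theorem~\ref{manifold}, and Corollary~\ref{toe2}. Recall that the proof of Theorem~\ref{manifold} does more than assert essential normality of $P$: for $s$ sufficiently close to $1$ it produces the explicit positive, regular Borel measure
$$d\mu_s = (1-|w|^2)^{n-d}\,dv_d|_{M_s} + \sum_{i=1}^m (1-|z_i|^2)^{n+1}\delta_{z_i}$$
supported on $M$, shows it is a Carleson measure for $\ber$, and proves $T_{\mu_s}^3 \geq c\,T_{\mu_s}$ for some $c>0$, hence that $T_{\mu_s}$ is bounded below on $Q$. Using the identity $\langle T_{\mu_s}f,f\rangle = \int_M|f(w)|^2\,d\mu_s(w)$ valid for all $f\in\ber$, the Carleson property gives the upper bound $\int_M|f|^2\,d\mu_s \leq C\|f\|^2$ and boundedness below on $Q$ gives the lower bound $c\|f\|^2 \leq \int_M|f|^2\,d\mu_s$, for all $f\in Q$. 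So $\mu_s$ satisfies exactly the hypotheses of Theorem~\ref{main}.

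First I would check the remaining regularity hypotheses of Theorem~\ref{main}: $\mu_s$ is visibly positive, regular and Borel, and it is finite because it is a Carleson measure --- taking $f\equiv 1$ in condition (2) of Lemma~\ref{4equivalent} gives $\mu_s(M)\leq C\,v_n(\bn)<\infty$. With that, Corollary~\ref{toe2} applies with $\mu=\mu_s$, and its conclusion is precisely that the projection operators $P$ and $Q$ both belong to $\toe$; in particular $P\in\toe$, which is the assertion of the corollary. For the disconnected case there is nothing new: as in the last paragraph of the proof of Theorem~\ref{manifold}, one passes to a smaller neighborhood of $\clb$, writes $\tilde{M}$ as a finite disjoint union of connected pieces of mutually positive distance, forms the analogous weighted measure plus point masses at the finitely many singular points, and the same estimates show $T_{\mu_s}$ is bounded below on $Q$; Corollary~\ref{toe2} then applies verbatim.

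The only mild obstacle is bookkeeping rather than mathematics: one must be sure that the proof of Theorem~\ref{manifold} is genuinely establishing the two-sided norm equivalence of Theorem~\ref{main} on $Q$ and not merely invoking Theorem~\ref{main} as a black box, so that there is no circularity --- but the quantitative statement $T_{\mu_s}^3\geq c\,T_{\mu_s}$ is proved directly there, so quoting it and then Corollary~\ref{toe2} is legitimate. No further computation is required.
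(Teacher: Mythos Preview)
Your proposal is correct and matches the paper's approach exactly: the paper states Corollary~\ref{toe} without separate proof because it follows immediately from the fact that the proof of Theorem~\ref{manifold} constructs a measure $\mu_s$ satisfying the hypotheses of Theorem~\ref{main}, whereupon Corollary~\ref{toe2} applies. Your careful verification that $\mu_s$ is finite, positive, regular and Borel, and your remark about the disconnected case, are sound but are already implicit in the paper's treatment.
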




\begin{rem}\label{rem123}
\begin{itemize}
\item[(1)] Set $d\nu=(1-|w|^2)^{n-d}dv_d|_{M_s}$, then $T_{\nu}$ is a finite rank perturbation of $T_{\mu_s}$ and $\ker T_{\nu}=P$. Restricting to $Q$, $T_{\mu_s}$ is invertible and $T_{\nu}$ is a finite rank perturbation. So $T_{\nu}$ has index $0$. Since $T_{\nu}$ doesn't vanish on $Q$, it is invertible. Therefore $0$ is also isolated in $\sigma(T_{\nu})$, i.e., the measure $\nu$ also satisfies the hypotheses of Theorem \ref{main}. But how to prove it directly is still a problem. Also, for any positive measure $\mu$ on $M$ that is greater than $\nu$, since $T_{\mu}\geq T_{\nu}$ and they have the same kernel, $\mu$ also satisfies the assumptions in Theorem \ref{main}. In particular, we can choose $\mu$ to be the measure that defines the weighted Bergman space on $M$(cf. \cite{DYT}). For measures $\mu$ satisfying the hypotheses of Theorem \ref{main}, the spaces $\mathcal{P}(\mu)$ are the same(with equivalent norms). Sometimes we write $\mathcal{P}$ for simplicity.
\item[(2)] By Proposition 4.4 in \cite{DYT}, the extensions associated to the quotient module and the module $\mathcal{P}$ are unitarily equivalent. If we can prove that the analytic polynomials are dense in the weighted Bergman space $L_{a,n-d}^2(M)$ defined in \cite{DYT}, i.e., $\mathcal{P}=L_{a,n-d}^2(M)$, then the quotient module also defines a K-homology class of the boundary $\tilde{M}\cap\partial\bn$, which we expect to be the fundamental class of $\tilde{M}\cap\partial\bn$ defined by the CR-structure on it.
\item[(3)] Now that we already know the projection $Q$ is in the Toeplitz algebra, there is an easier way to check if a measure $\mu$ satisfy the hypotheses of Theorem \ref{main}. For example, suppose we know that
    $$\|{T_{\mu}}_x-Q_x\|\leq a<1,~~\forall x\in M_{\mathcal{A}}/\bn,$$
   where $S_x$ for an operator $S$ is defined in \cite{Suarez07}, then from Theorem 10.1 in \cite{Suarez07}, it's easy to prove that $T_{\mu}$ is Fredholm as an operator on $Q$. Therefore $T_{\mu}$ is invertible whenever $T_{\mu}$ doesn't vanish on $Q$.
\end{itemize}
\end{rem}
\begin{rem}\label{remext2}
By the previous remark (1), the proof of Theorem \ref{manifold} gives an extension operator from $\mathcal{P}(\mu_s)$ to $\ber$. This is a generalization of Theorem 4.3 in \cite{DYT}. We state it as a corollary.
\begin{cor}\label{extension}
Suppose $\tilde{M}$ is a $d$-dimensional complex analytic subset of an open neighborhood of $\clb$ which intersects $\pbn$ transversally and has no singular point on $\pbn$. Let $M=\tilde{M}\cap\bn$, $\mu=(1-|w|^2)^{n-d}dv_d$ on $M$ and $\mathcal{P}$ be the closed subspace of $L^2(\mu)$ generated by analytic polynomials. Then $\mathcal{P}$ is exactly the range of the restriction operator
$$
R: \ber\to L^2(\mu), f\mapsto f|_M.
$$
Equivalently, there is an extension operator
$$
E: \mathcal{P}\to\ber,
$$
which is a right inverse of $R$.
\end{cor}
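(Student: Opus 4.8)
The plan is to deduce the corollary from Theorem~\ref{manifold}, Remark~\ref{rem123}(1) and the open mapping principle; the genuine analysis has already been carried out inside the proof of Theorem~\ref{manifold}, so what is left is largely formal. First I would verify that $\mu=(1-|w|^2)^{n-d}dv_d$ is a finite Carleson measure on $M$ with $\ker T_\mu=P$. Since $\tilde M$ has only finitely many singular points in $\bn$, the volume measure $v_d$ is a well-defined Borel measure on $M$ (the singular locus is $v_d$-null), and by the local finiteness of the $2d$-dimensional volume of an analytic set, $\mu$ is finite on compact subsets of $\bn$. Decomposing $\mu=\nu+\mu''$ with $\nu=(1-|w|^2)^{n-d}dv_d|_{M_s}$ as in Remark~\ref{rem123}(1) and $\mu''=\mu|_{M^{s}}$ supported on a compact subset of $\bn$, the piece $\mu''$ is trivially Carleson and $\nu\le\mu_s$ is Carleson since $\mu_s$ is (the lemma preceding Lemma~\ref{delta}); hence $\mu$ is a finite Carleson measure, $R\colon\ber\to L^2(\mu)$ is bounded, and $\langle T_\mu f,f\rangle=\int_M|f|^2\,d\mu$ for $f\in\ber$. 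For the kernel, if $f\in\ber$ vanishes $\mu$-a.e.\ on $M$ then it vanishes on a set of full $v_d$-measure in the regular part of $M$, hence identically on $M$ by the identity theorem on each connected component; so $\ker R=\ker T_\mu=P$.

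Next I would show that $\mu$ meets the hypotheses of Theorem~\ref{main}. By Remark~\ref{rem123}(1) the measure $\nu$ already does: $T_\nu$ is a finite rank perturbation of $T_{\mu_s}$, which is bounded below on $Q$ by the proof of Theorem~\ref{manifold}, so $T_\nu|_Q$ is Fredholm of index $0$ and, having trivial kernel, is invertible, say $\int_M|f|^2\,d\nu=\langle T_\nu f,f\rangle\ge c\|f\|^2$ for all $f\in Q$. Since $\mu\ge\nu$ and $\ker T_\mu=\ker T_\nu=P$, we get $\int_M|f|^2\,d\mu\ge c\|f\|^2$ for every $f\in Q$, while the Carleson property gives the reverse bound $\int_M|f|^2\,d\mu\le C\|f\|^2$; thus the $L^2(\mu)$-norm and the Bergman norm are equivalent on $Q$.

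Finally I would identify $\operatorname{Range}R$ with $\mathcal P$ and construct $E$. From the previous step $\|Rf\|_\mu^2=\langle T_\mu Qf,Qf\rangle\ge c\|Qf\|^2$ for $f\in\ber$, so $R$ is bounded below on $Q$, and since $R$ annihilates $P$ its range equals $R(Q)$, which is therefore closed in $L^2(\mu)$. As polynomials are dense in $\ber$ and $R$ is bounded, $R(\mathbb C[z_1,\dots,z_n])$ is dense in $\operatorname{Range}R$; being closed, $\operatorname{Range}R=\overline{R(\mathbb C[z_1,\dots,z_n])}=\mathcal P$. Consequently $R|_Q\colon Q\to\mathcal P$ is a bounded bijection, and the open mapping theorem furnishes a bounded inverse $E:=(R|_Q)^{-1}\colon\mathcal P\to Q\subset\ber$ with $RE=\operatorname{Id}_{\mathcal P}$, which is the asserted right inverse of $R$.

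The only step that is not pure bookkeeping is the middle one, and that is where any difficulty resides; but it is essentially immediate, because passing from the auxiliary measure $\mu_s$ of the proof of Theorem~\ref{manifold} to the global weighted measure $\mu$ costs only a finite rank perturbation together with a positivity argument. Equivalently, in the language of Remark~\ref{remext}, the proof of Theorem~\ref{manifold} already produces a bounded $E$ with $RE=\operatorname{Id}$ for $\mu_s$, and the same operator serves verbatim for any larger Carleson measure on $M$ with the same kernel, in particular for $\mu$; this yields a second route to the corollary.
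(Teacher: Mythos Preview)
Your proposal is correct and follows essentially the same route the paper takes: the paper does not give a separate proof of Corollary~\ref{extension} but simply cites Remark~\ref{rem123}(1) together with the proof of Theorem~\ref{manifold}, and your write-up is precisely a fleshed-out version of that reference (finite-rank perturbation from $\mu_s$ to $\nu$, positivity from $\nu$ to $\mu\ge\nu$, then the open mapping theorem to invert $R|_Q$). The only additional detail you supply beyond the paper's sketch is the verification that the compactly supported piece $\mu|_{M^{s}}$ is a finite (hence Carleson) measure via local finiteness of the volume of an analytic set, which is indeed needed to pass from $\nu$ to the global $\mu$ and is implicit in the paper.
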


In the case of \cite{Beatrous} and \cite{DYT}, $\mathcal{P}(\mu_s)$ coincides with the weighted Bergman space on $M$. We don't know whether the two spaces are the same in general.

The papers \cite{Beatrous} and \cite{DYT} prove the existence of a extension map by giving an integral formula directly and proving the boundedness of it. In fact, our proof also shows the existence of an integral formula defining the extension operator $E$. $\forall f\in\mathcal{P}$, $\forall z\in\bn$,
$$
Ef(z)=\langle Ef, K_z\rangle=\langle f, E^*K_z\rangle_{\mathcal{P}}=\int_M f(w)\overline{E^*K_z(w)}d\mu(w).
$$
This is a new result in the theory of holomorphic extension.
\end{rem}

\section{$p$-Essential Normality for $p>2d$}\label{secpess}

This section is an attempt to solving the unweakened Geometric Arveson-Douglas Conjecture. Throughout this section, we assume $M$ satisfies the hypothesis of Theorem~\ref{manifold}. Let $\mu=(1-|w|^2)^{n-d}dv_d$, where $d$ is the complex dimension of $M$. Then from Remark \ref{rem123} above, $\mu$ satisfies the hypothesis of Theorem~\ref{main}. The restriction operator $R$ is one-to-one when restricted to the quotient space $Q$. In this section, we will show that the range space $\mathcal{P}\subset L^2(\mu)$ is $p$-essentially normal( $p>d$) as a Hilbert module. Moreover, we will show that for $p>2d$, the two modules are ``equivalent'' modulo $\mathcal{S}^p$, in particular, $Q$ is $p$-essentially normal for $p>2d$.

The following lemma is a generalization of Lemma 7 in \cite{hankel}.
\begin{lem}
Suppose $2\leq p<+\infty$ and $G(z,w)$ is $\mu$-measurable in both variables. Let $A_G$ be the integral operator on $L^2(\mu)$ defined by
$$
A_Gf(z)=\int_M G(z,w)K_w(z)f(w)d\mu(w).
$$
If
$$
\int_M\int_M|G(z,w)|^p|K_w(z)|^2d\mu(z)d\mu(w)<+\infty,
$$
then $A_G$ is in the Schatten $p$ class $\mathcal{S}^p$.
\end{lem}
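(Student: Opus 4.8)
The plan is to prove this by complex interpolation of Schatten ideals, using the hypothesis as a Hilbert--Schmidt estimate at one endpoint and a Schur test at the other.

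First I would record the case $p=2$. The operator $A_G$ is the integral operator on $L^2(\mu)$ with kernel $\Phi(z,w)=G(z,w)K_w(z)$, so
$$
\|A_G\|_{\mathcal{S}^2}^2=\int_M\int_M|G(z,w)|^2|K_w(z)|^2\,d\mu(z)\,d\mu(w),
$$
which is exactly the hypothesis when $p=2$; hence $A_G\in\mathcal{S}^2$. For $p>2$, set $\theta=2/p\in(0,1)$ and introduce the analytic family
$$
A_sf(z)=\int_M G(z,w)\,|G(z,w)|^{\frac{p}{2}s-1}K_w(z)f(w)\,d\mu(w),\qquad 0\le\operatorname{Re}s\le1,
$$
with the convention that the integrand vanishes where $G(z,w)=0$. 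Then $|G(z,w)|^{\frac{p}{2}s-1}$ is analytic in $s$, $A_{\theta}=A_G$, and for $f,g$ ranging over, say, bounded functions of bounded support in $\bn$, the matrix coefficients $s\mapsto\langle A_sf,g\rangle$ are analytic on the open strip and of admissible growth on its closure; this is a routine dominated-convergence argument using that $K_w(z)$ is bounded above and below on compact subsets of $\bn\times\bn$ together with the hypothesis.

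On the line $\operatorname{Re}s=1$ the symbol has modulus $|G|^{p/2}$, so the $p=2$ computation gives $\|A_s\|_{\mathcal{S}^2}^2\le\int_M\int_M|G(z,w)|^{p}|K_w(z)|^2\,d\mu(z)\,d\mu(w)=:N<\infty$ uniformly in such $s$. On the line $\operatorname{Re}s=0$ the symbol has modulus at most $1$, so $A_s$ is dominated in absolute value by the integral operator on $L^2(\mu)$ with kernel $|K_w(z)|=|1-\langle z,w\rangle|^{-(n+1)}$; I would show this operator is bounded by a Schur test with weight $h(z)=(1-|z|^2)^{-a}$ for a sufficiently small $a>0$, i.e.\ via
$$
\int_M\frac{(1-|w|^2)^{-2a}}{|1-\langle z,w\rangle|^{n+1}}\,d\mu(w)\le C(1-|z|^2)^{-2a}.
$$
Near $\pbn$ this reduces, through the local graph coordinates for $M$ established in Section~\ref{manifoldsec} and Lemma~\ref{1.4.10 Rudin} on $\mathbb{B}_d$, to the Forelli--Rudin estimate, and away from $\pbn$ it is elementary; thus $\sup_{\operatorname{Re}s=0}\|A_s\|\le M_0<\infty$. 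Stein's interpolation theorem for analytic families valued in the Schatten ideals, together with $(\mathcal{B}(L^2(\mu)),\mathcal{S}^2)_{[\theta]}=\mathcal{S}^p$ where $1/p=\theta/2$, then yields $A_G=A_{\theta}\in\mathcal{S}^p$, with $\|A_G\|_{\mathcal{S}^p}\le M_0^{1-\theta}N^{\theta/2}$.

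The main obstacle is the boundedness on the line $\operatorname{Re}s=0$: the Schur test for the kernel $|1-\langle z,w\rangle|^{-(n+1)}$ against the measure $\mu$ living on the lower-dimensional set $M$ is where the geometry of $M$ (transversality, the graph coordinates near $\pbn$, and the finitely many interior singularities) and the Forelli--Rudin type estimates genuinely enter; everything else is formal. An alternative that sidesteps the analytic family is to treat the even integers $p=2m$ directly, expanding the kernel of $(A_G^*A_G)^m$ as an $m$-fold iterated integral, bounding its trace by Fubini together with Cauchy--Schwarz against the weight $|K_w(z)|^2$, and then interpolating between consecutive even integers; the bookkeeping there is heavier, but it avoids Stein interpolation.
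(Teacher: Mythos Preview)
Your proposal is correct and follows essentially the same route as the paper: both arguments establish the Hilbert--Schmidt case $p=2$ directly, prove boundedness of $A_G$ for $|G|\le 1$ by a Schur test with weight $(1-|z|^2)^{-a}$ against the measure $\mu$, and then interpolate between these endpoints. The only difference is packaging---the paper interpolates the linear map $G\mapsto A_G$ from $L^2(\nu)+L^\infty(\nu)$ into $\mathcal{S}^2+\mathcal{B}(L^2(\mu))$ (with $d\nu=|K_w(z)|^2\,d\mu(z)\,d\mu(w)$) via non-commutative interpolation, whereas you run Stein's analytic-family argument explicitly; and for the Schur estimate the paper passes from $M$ to $\bn$ via the Carleson condition rather than to $\mathbb{B}_d$ via local graph coordinates, but both reduce to the same Forelli--Rudin bound.
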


\begin{proof}
When $p=2$ the result is well-known. When $|G(z,w)|$ is bounded, let $h(z)=(1-|z|^2)^{-1/4}$, then
\begin{eqnarray*}
\int_M|G(z,w)K_w(z)|h(z)d\mu(z)&\leq&C\int_M\frac{1}{|1-\langle z,w\rangle|^{n+1}(1-|z|^2)^{1/4}}d\mu(z)\\
&\leq&C\int_{\bn}\frac{1}{|1-\langle z,w\rangle|^{n+1}(1-|z|^2)^{1/4}}dv(z)\\
&\leq&C(1-|w|^2)^{-1/4}.
\end{eqnarray*}
Similarly,
$$
\int_M|G(z,w)K_w(z)|h(w)d\mu(w)\leq Ch(z).
$$
By Schur's test, $A_G$ defines a bounded operator. Now let $d\nu$ be the measure $|K_w(z)|^2d\mu(z)d\mu(w)$ on $M\times M$ and consider the map
$$
L^2(\nu)+L^{\infty}(\nu)\to\mathcal{B}(L^2(\mu)),~G(z,w)\to A_G.
$$
Then by non-commutative interpolation(cf. \cite{interpolation} ), the lemma is true.
\end{proof}

\begin{lem}\label{hatT}
Let
$$
\hat{T}_{\mu}: L^2(\mu)\to L^2(\mu),~\hat{T}_{\mu}f(z)=\int_M f(w)K_w(z)d\mu(w),
$$
$$
\hat{M}_{z_i}: L^2(\mu)\to L^2(\mu),~f\mapsto z_if.
$$
Then the commutator $[\hat{M}_{z_i},\hat{T}_{\mu}]\in\mathcal{S}^p$, $\forall p>2d$.
\end{lem}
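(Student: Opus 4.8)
The plan is to recognize the commutator as one of the integral operators $A_G$ of the preceding lemma and then verify its Schatten integral criterion. Since $\hat{M}_{z_i}$ is bounded and $\hat{T}_{\mu}$ is bounded on $L^2(\mu)$ (Schur test, as in the first part of the proof of the preceding lemma), for $f\in L^2(\mu)$ and $z\in M$ we may write
\[
[\hat{M}_{z_i},\hat{T}_{\mu}]f(z)=z_i\!\int_M\! f(w)K_w(z)\,d\mu(w)-\int_M\! w_i f(w)K_w(z)\,d\mu(w)=\int_M (z_i-w_i)f(w)K_w(z)\,d\mu(w),
\]
so that $[\hat{M}_{z_i},\hat{T}_{\mu}]=A_G$ with $G(z,w)=z_i-w_i$. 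By the preceding lemma it therefore suffices, for each fixed $p>2d$, to show
\[
\int_M\int_M|z_i-w_i|^p\,|K_w(z)|^2\,d\mu(z)\,d\mu(w)<+\infty.
\]

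Next I would reduce this to a pure kernel estimate. From the elementary bound $|1-\langle z,w\rangle|\geq\tfrac12|z-w|^2$ on $\bn$ and $|z_i-w_i|\leq|z-w|$ we get $|z_i-w_i|^p\leq 2^{p/2}|1-\langle z,w\rangle|^{p/2}$; combined with $|K_w(z)|^2=|1-\langle z,w\rangle|^{-2(n+1)}$ and $d\mu=(1-|w|^2)^{n-d}dv_d$, the double integral above is dominated by a constant times
\[
\mathcal{I}_p:=\int_M\int_M\frac{(1-|z|^2)^{n-d}(1-|w|^2)^{n-d}}{|1-\langle z,w\rangle|^{\,2(n+1)-p/2}}\,dv_d(z)\,dv_d(w).
\]
So the whole matter is to show $\mathcal{I}_p<+\infty$ when $p>2d$.

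Finally I would estimate $\mathcal{I}_p$ with the tools of Section \ref{manifoldsec}. Set $\gamma=2(n+1)-p/2$, fix $s<1$ close to $1$, and split $M=M^s\cup M_s$. On the part of $M\times M$ where at least one variable lies in $M^s$ the kernel $|1-\langle z,w\rangle|^{-\gamma}$ is bounded (either $\gamma\leq0$, or $1-|z|\geq1-s>0$ keeps the denominator bounded below), and since the compact analytic set $\overline{M^s}$ has finite $d$-volume while $\mu$ is finite, that contribution is finite. For $z,w\in M_s$ one uses the finite coordinate cover $\{V_i\}$ of $\pbn\cap\tilde M$ and the change of variables from the proof of Lemma \ref{s3} (the piece of $M_s$ far from $z$ again contributing a bounded kernel) to bound the inner integral by $C\int_{\mathbb{B}_d}(1-|w'|^2)^{n-d}|1-\langle z',w'\rangle|^{-\gamma}\,dv(w')$, which by Lemma \ref{1.4.10 Rudin} (parameter $c=\gamma-(n+1)$) is $\lesssim(1-|z|^2)^{p/2-(n+1)}$ if $\gamma>n+1$, $\lesssim\log\frac{1}{1-|z|^2}$ if $\gamma=n+1$, and bounded if $\gamma<n+1$. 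Multiplying by $(1-|z|^2)^{n-d}$, integrating over $M_s$, and reducing once more to $\mathbb{B}_d$, the dominant term is $\int_{\mathbb{B}_d}(1-|z'|^2)^{p/2-d-1}\,dv(z')$, finite precisely when $p>2d$, the remaining cases giving $\int_{\mathbb{B}_d}(1-|z'|^2)^{n-d}\log\frac1{1-|z'|^2}\,dv$ or $\int_{\mathbb{B}_d}(1-|z'|^2)^{n-d}\,dv$ (both finite, $n-d\geq0>-1$). The only non-formal step here is the passage from an integral over $M_s$ to one over $\mathbb{B}_d$ with a constant uniform in $z$; this is exactly the local-coordinate argument already carried out in the proofs of Lemma \ref{integral on affine space} and Lemma \ref{s3}, so it needs no new idea, and it is what pins the threshold: $\int_{\mathbb{B}_d}(1-|z'|^2)^{p/2-d-1}dv$ converges iff $p>2d$, which is why the method gives $\mathcal{S}^p$ for every $p>2d$.
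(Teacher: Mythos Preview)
Your proposal is correct and follows essentially the same route as the paper: write $[\hat{M}_{z_i},\hat{T}_{\mu}]=A_G$ with $G(z,w)=z_i-w_i$, invoke the preceding lemma, bound $|z-w|^p\leq C|1-\langle z,w\rangle|^{p/2}$, and reduce the resulting kernel integral to $\int_{\mathbb{B}_d}(1-|z'|^2)^{p/2-d-1}\,dv$ via local coordinates and Lemma~\ref{1.4.10 Rudin}. The only cosmetic difference is that the paper splits the inner integral by $|z-w|\lessgtr\delta$ and applies the $|z-w|^p\lesssim|1-\langle z,w\rangle|^{p/2}$ bound after passing to local coordinates, whereas you apply that bound first and split by $M^s$ versus $M_s$; the paper also does not bother separating out the cases $\gamma\leq n+1$, since they give a better estimate anyway.
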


\begin{proof}
It's easy to check that
$$
[\hat{T}_{\mu},\hat{M}_{z_i}]f(z)=\int_M(w_i-z_i)K_w(z)f(w)d\mu(w),~\forall z\in M,~\forall f\in L^2(\mu).
$$
Let $G(z,w)=w_i-z_i$ in the last lemma, then it suffices to prove that
$$
\int_M\int_M|z-w|^p|K_w(z)|^2d\mu(w)d\mu(z)<+\infty,~\forall p>2d.
$$
Now when $|z-w|>\delta$ the intergrade is bounded. On the other hand, using the same technique in the last section,
\begin{eqnarray*}
&&\int_{B(z,\delta)\cap M}|z-w|^p|K_w(z)|^2d\mu(w)\\
&\leq& C\int_{\mathbb{B}_d}|z'-w'|^p\frac{(1-|w'|^2)^{n-d}}{|1-\langle z',w'\rangle|^{2(n+1)}}dv_d(w')\\
&\leq&C\int_{\mathbb{B}_d}\frac{(1-|w'|^2)^{n-d}}{|1-\langle z',w'\rangle|^{2(n+1)-p/2}}\\
&\leq&C(1-|z|^2)^{-(n+1-p/2)}.
\end{eqnarray*}
Therefore
\begin{eqnarray*}
&&\int_M\int_M|z-w|^p|K_w(z)|^2d\mu(w)d\mu(z)\\
&\leq&C\int_M(1-|z|^2)^{-(n+1-p/2)}d\mu(z)\\
&=&C\int_M(1-|z|^2)^{p/2-d-1}dv_d(z).
\end{eqnarray*}
Using local coordinates, the last integral is less than
$$
C\int_{\mathbb{B}_d}(1-|z|^2)^{p/2-d-1}dv(z),
$$
which is bounded when $p>2d$. This completes the prove.
\end{proof}

\begin{lem}\label{P is p essential normal}
The module $\mathcal{P}$ is $p$-essentially normal for all $p>d$. That is $[M_{z_i},M_{z_j}^*]\in\mathcal{S}^p$, $p>d$, where $M_{z_i}$ are the multiplication operators on $\mathcal{P}$.
\end{lem}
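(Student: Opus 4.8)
\emph{Proof proposal.} The plan is to express the coordinate multiplications on $\mathcal{P}$ as compressions of the commuting, normal coordinate multiplications on $L^2(\mu)$, reduce each self‑commutator on $\mathcal{P}$ to a sum of products of \emph{two} commutators that are already controlled by Lemma~\ref{hatT}, and then gain a factor of two in the Schatten exponent via H\"older's inequality for Schatten classes.

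First I would set up the compression picture. Since $\mathcal{P}$ is the $L^2(\mu)$‑closure of $\{p|_M:p\in\mathbb{C}[z_1,\cdots,z_n]\}$ and $z_ip$ is again a polynomial, $\mathcal{P}$ is invariant under $\hat{M}_{z_i}$, multiplication by $z_i$ on $L^2(\mu)$ (bounded since $|z_i|<1$ on $\bn$). Hence the module multiplication on $\mathcal{P}$ is $M_{z_i}=\hat{M}_{z_i}|_{\mathcal{P}}=P_{\mathcal{P}}\hat{M}_{z_i}P_{\mathcal{P}}$ and $M_{z_i}^{*}=P_{\mathcal{P}}\hat{M}_{z_i}^{*}P_{\mathcal{P}}$, where $P_{\mathcal{P}}$ is the orthogonal projection of $L^2(\mu)$ onto $\mathcal{P}$. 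Writing $A_i=\hat{M}_{z_i}$ and $P=P_{\mathcal{P}}$, and using $P^{2}=P$ together with $[A_i,A_j^{*}]=0$, a direct computation (the terms not containing a commutator with $P$ cancel, since $P[A_i,P]=[P,A_i](P-I)$ and $(P-I)A_j^{*}P=(P-I)[A_j^{*},P]$) yields, on $\mathcal{P}$,
\[
[M_{z_i},M_{z_j}^{*}]=[P,A_i]\,(P-I)\,[A_j^{*},P]-[P,A_j^{*}]\,(P-I)\,[A_i,P].
\]

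Next I would show $[P,A_i]\in\mathcal{S}^{q}$ for every $q>2d$. Let $R\colon\ber\to L^2(\mu)$ be the restriction map; then $R^{*}R=T_\mu$ on $\ber$ and $RR^{*}=\hat{T}_\mu$ on $L^2(\mu)$, with $\overline{\operatorname{ran}\hat{T}_\mu}=\mathcal{P}$. Since $\mu$ satisfies the hypotheses of Theorem~\ref{main} (Remark~\ref{rem123}), $0$ is isolated in $\sigma(T_\mu)$, hence in $\sigma(\hat{T}_\mu)$, so $P=P_{\mathcal{P}}$ is the Riesz projection $\frac{1}{2\pi i}\oint_{\Gamma}(\zeta-\hat{T}_\mu)^{-1}\,d\zeta$ of $\hat{T}_\mu$ for a contour $\Gamma$ encircling $\sigma(\hat{T}_\mu)\setminus\{0\}$. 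Then
\[
[A_i,P]=\frac{1}{2\pi i}\oint_{\Gamma}(\zeta-\hat{T}_\mu)^{-1}[A_i,\hat{T}_\mu](\zeta-\hat{T}_\mu)^{-1}\,d\zeta,
\]
and since $[A_i,\hat{T}_\mu]=[\hat{M}_{z_i},\hat{T}_\mu]\in\mathcal{S}^{q}$ for $q>2d$ by Lemma~\ref{hatT} while $(\zeta-\hat{T}_\mu)^{-1}$ is uniformly bounded on $\Gamma$, the integral converges in $\mathcal{S}^{q}$; thus $[A_i,P]\in\mathcal{S}^{q}$, and, taking adjoints, $[A_j^{*},P]\in\mathcal{S}^{q}$ too, for every $q>2d$.

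Finally, each summand in the displayed identity is a product of two operators in $\mathcal{S}^{q}$ ($q>2d$) with the bounded operator $P-I$ in between, hence lies in $\mathcal{S}^{q/2}$ by H\"older for Schatten classes; letting $q\downarrow 2d$ gives $[M_{z_i},M_{z_j}^{*}]\in\mathcal{S}^{p}$ for every $p>d$ and all $i,j$, which is the assertion. I expect the main obstacle to be the bookkeeping in the algebraic identity of the second paragraph and, more substantively, the passage from the Schatten estimate for $\hat{T}_\mu$ to one for the spectral projection $P_{\mathcal{P}}$; conceptually, it is precisely the splitting of the self‑commutator into a product of two ``half‑as‑singular'' pieces that upgrades the exponent from the $p>2d$ of Lemma~\ref{hatT} to the desired $p>d$.
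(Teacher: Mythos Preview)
Your proof is correct and follows essentially the same strategy as the paper's: the paper cites Arveson's Proposition~4.1 in \cite{Arv p summable} (which is precisely your compression identity together with H\"older for Schatten classes) to halve the Schatten exponent, and Connes' smooth-functional-calculus lemma (Proposition~5, Appendix~I of \cite{connes}, which your Riesz-projection computation reproves in the special case of an isolated spectral projection) to pass from $[\hat{M}_{z_i},\hat{T}_\mu]\in\mathcal{S}^q$, $q>2d$, to $[\hat{M}_{z_i},P_{\mathcal{P}}]\in\mathcal{S}^q$. Your version has the merit of being self-contained rather than relying on the two external references.
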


\begin{proof}
Since the module action on $L^2(\mu)$ is normal, from Proposition 4.1 in \cite{Arv p summable}, it suffices to show that $[\mathcal{P}, \hat{M}_{z_i}]\in\mathcal{S}^p$, $p>2d$. Clearly $\hat{T}_{\mu}$ is self-adjoint and $\mathcal{P}$ is a $C^{\infty}$-functional calculous of $\hat{T}_{\mu}$. Combining lemma \ref{hatT} and Proposition 5 in Appendix I of \cite{connes} one gets the desired result.
\end{proof}

Now we are ready to prove the main theorem of this section.
\begin{thm}\label{2d ess nor}
Under the assumptions of Theorem \ref{manifold}, the quotient module $Q$ is $p$-essentially normal for all $p>2d$.
\end{thm}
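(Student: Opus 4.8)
The plan is to transfer the $p$-essential normality of $\mathcal{P}$ — already proved in Lemma~\ref{P is p essential normal} for every $p>d$ — to the quotient module $Q$ through the restriction operator, absorbing the discrepancy with the help of Lemma~\ref{hatT}. Recall that $\mu=(1-|w|^2)^{n-d}dv_d$ satisfies the hypotheses of Theorem~\ref{main}, so the restriction map $R\colon\ber\to L^2(\mu)$ is bounded and bounded below on $Q$, and by Corollary~\ref{extension} its range is exactly $\mathcal{P}$; hence $X:=R|_Q\colon Q\to\mathcal{P}$ is bounded with bounded inverse. Write $S_i:=QM_{z_i}|_Q$ for the module multiplication on $Q$ and $M_{z_i}=\hat{M}_{z_i}|_{\mathcal{P}}$ for that on $\mathcal{P}$ (the space $\mathcal{P}$ being invariant under $\hat{M}_{z_i}$). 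Since $RM_{z_i}=\hat{M}_{z_i}R$ on $\ber$ and $R$ annihilates $P$, one gets $XS_i=M_{z_i}X$, i.e.\ $XS_iX^{-1}=M_{z_i}$ exactly. Moreover a one-line computation gives $RR^{*}=\hat{T}_{\mu}$ on $L^2(\mu)$, and $R^{*}$ maps $L^2(\mu)$ into $Q$, so $A:=XX^{*}=\hat{T}_{\mu}|_{\mathcal{P}}$, a positive invertible operator on $\mathcal{P}$.

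Next I would show that $X$ intertwines the adjoints up to a Schatten-class error. Taking adjoints in $XS_i=M_{z_i}X$ yields $XS_i^{*}X^{-1}=AM_{z_i}^{*}A^{-1}$, hence
\[
K_i:=XS_i^{*}X^{-1}-M_{z_i}^{*}=AM_{z_i}^{*}A^{-1}-M_{z_i}^{*}=-[M_{z_i}^{*},A]A^{-1}.
\]
Because $\hat{T}_{\mu}$ maps $L^2(\mu)$ into $\mathcal{P}$, vanishes on $\mathcal{P}^{\perp}$, and $\mathcal{P}$ is $\hat{M}_{z_i}$-invariant, the commutator $[M_{z_i},A]$ on $\mathcal{P}$ is exactly the restriction of $[\hat{M}_{z_i},\hat{T}_{\mu}]$ to the invariant subspace $\mathcal{P}$; by Lemma~\ref{hatT} it therefore lies in $\mathcal{S}^p$ for all $p>2d$, and hence so do $[M_{z_i}^{*},A]=-[M_{z_i},A]^{*}$ and $K_i$. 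Consequently, for any $i,j$,
\[
X[S_i,S_j^{*}]X^{-1}=M_{z_i}(M_{z_j}^{*}+K_j)-(M_{z_j}^{*}+K_j)M_{z_i}=[M_{z_i},M_{z_j}^{*}]+[M_{z_i},K_j].
\]
By Lemma~\ref{P is p essential normal} the first summand lies in $\mathcal{S}^p$ for $p>d$, and the second lies in $\mathcal{S}^p$ for $p>2d$ since $K_j\in\mathcal{S}^p$ and $M_{z_i}$ is bounded. As $X$ has bounded inverse, this forces $[S_i,S_j^{*}]\in\mathcal{S}^p$ for every $p>2d$, which is the theorem.

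Essentially all the analysis is already contained in Lemmas~\ref{hatT} and~\ref{P is p essential normal}; what remains is careful bookkeeping with compressions. The two points I would treat with most care are the identity $XX^{*}=\hat{T}_{\mu}|_{\mathcal{P}}$ together with its invertibility (this is precisely where the norm equivalence of Theorem~\ref{main} enters), and the claim that the $\mathcal{P}$-commutator $[M_{z_i},A]$ agrees with $[\hat{M}_{z_i},\hat{T}_{\mu}]$ restricted to $\mathcal{P}$. The one genuine loss in the argument is the passage from the exponent $p>d$ available for $\mathcal{P}$ to $p>2d$ for $Q$: the factor $2$ is forced by the error term $K_i$, which is built from $[\hat{M}_{z_i},\hat{T}_{\mu}]$ and lives only in $\mathcal{S}^p$ for $p>2d$. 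Removing it — as the introduction anticipates should be possible — would require a sharper comparison of the two module structures than the crude conjugation used here.
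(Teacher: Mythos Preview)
Your argument is correct and follows essentially the same route as the paper: both conjugate the module action on $Q$ by the restriction $X=R|_Q$, identify $XX^{*}$ with $\tilde{T}_{\mu}=\hat{T}_{\mu}|_{\mathcal{P}}$, and use Lemma~\ref{hatT} together with Lemma~\ref{P is p essential normal} to conclude. Your explicit introduction of the error terms $K_i=-[M_{z_i}^{*},A]A^{-1}$ is a slightly cleaner bookkeeping of the same computation the paper carries out by commuting $\tilde{T}_{\mu}$ (and $\tilde{T}_{\mu}^{-1}$) past $M_{z_i}$ modulo $\mathcal{S}^p$.
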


\begin{proof}
Consider the following commuting graph:
$$
\xymatrix{
Q\ar[r]^{S_{z_i}}\ar[d]_R&Q\ar[d]^R\\
\mathcal{P}\ar[r]^{M_{z_i}}&\mathcal{P}
}
$$
Then $S_{z_i}=R^{-1}M_{z_i}R$. Therefore
$$
[S_{z_i},S_{z_j}^*]=R^{-1}M_{z_i}RR^*M_{z_j}^*R^{*-1}-R^*M_{z_j}^*(RR^*)^{-1}M_{z_i}R.
$$
On the other hand, for any $f$, $g\in Q$,
$$
\langle Rf,Rg\rangle=\int_Mf(z)\overline{g(z)}d\mu(z)=\langle T_{\mu}f,g\rangle.
$$
Hence $R^*R=T_{\mu}$. Also, from the following commuting graph,
$$
\xymatrix{
Q\ar[r]^{T_{\mu}}\ar[d]_R&Q\ar[d]^R\\
\mathcal{P}\ar[r]^{\tilde{T}_{\mu}}&\mathcal{P}
}
$$
$\tilde{T}_{\mu}=RT_{\mu}R^{-1}=RR^*RR^{-1}=RR^*$. From lemma \ref{P is p essential normal}, $[\tilde{T}_{\mu}, M_{z_i}]=[\hat{T}_{\mu},\hat{M}_{z_i}]|_{\mathcal{P}}\in\mathcal{S}^p$, $p>2d$. Therefore
\begin{eqnarray*}
&&[S_{z_i},S_{z_j}^*]\\
&=&R^{-1}M_{z_i}\tilde{T}_{\mu}M_{z_j}^*R^{*-1}-R^*M_{z_j}^*\tilde{T}_{\mu}^{-1}M_{z_i}R\\
&=&R^*M_{z_i}M_{z_j}^*R^{*-1}-R^*M_{z_j}^*M_{z_i}R^{*-1}~~~~(\mbox{modulo }\mathcal{S}^p)\\
&=&R^*[M_{z_i},M_{z_j}^*]R^{*-1}\in\mathcal{S}^p
\end{eqnarray*}
for $p>2d$. This completes the proof.
\end{proof}

\begin{rem}
Although our proof doesn't give $p$-essential normality for $p>d$, we still think it is true in our case. In the case of a hyperplane, the operators $[\hat{T}_{\mu}, \hat{M}_{z_i}]|_{\mathcal{P}}$ are $0$, therefore there are no obstructions in this case. It's possible that after modifying the measure and using more detailed estimation, one can show that the above operators are in $\mathcal{S}^p$, $p>d$, which would imply the unweakened Geometric Arveson-Douglas Conjecture.
\end{rem}

\section{Arveson-Douglas Conjecture}\label{arvsec}

Suppose $I\subset\mathbb{C}[z_1,\cdots,z_n]$ is a polynomial ideal. Let $\tilde{M}=Z(I)$, the zero variety of $I$. If $\tilde{M}$ satisfies the hypotheses of Theorem \ref{manifold}, then we know that the quotient module $Q$ is $p$-essential normal, $\forall p>2d$. In this section, we show that when $I$ is radical, $[I]=P$. Here $[I]$ is the closure of $I$ in $\ber$. Therefore we prove the weak Arveson-Douglas Conjecture for such ideals.

\begin{thm}\label{arv}
Suppose $I\subset\mathbb{C}[z_1,\cdots,z_n]$ is a radical polynomial ideal. If $\tilde{M}:=Z(I)$ has no singular point on $\pbn$ and intersects $\pbn$ transversally, then $[I]=\{f\in\ber: f|_M=0\}$. Here $M=\tilde{M}\cap\bn$. As a consequence, the quotient module $Q=[I]^{\perp}$ is $p$-essentially normal, $p>2d$. Moreover, the projection operator onto $Q$ is in the Toeplitz algebra $\toe$.
\end{thm}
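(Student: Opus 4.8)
First I would observe that the last two assertions are immediate once the first is known: if $[I]=\{f\in\ber:f|_M=0\}=P$, then $Q=[I]^\perp=P^\perp$, and since $M=\tilde M\cap\bn$ satisfies the hypotheses of Theorem \ref{manifold}, Theorem \ref{2d ess nor} gives the $p$-essential normality of $Q$ for $p>2d$ and Corollary \ref{toe} puts the projection onto $Q$ in $\toe$. So the whole task is to prove $[I]=P$. The inclusion $[I]\subseteq P$ is trivial, since each $p\in I$ vanishes on $\tilde M=Z(I)\supseteq M$ and $P$ is closed; the content is $P\subseteq[I]$, which I would establish in two steps: (i) every function holomorphic in a neighborhood of $\clb$ and vanishing on $\tilde M$ belongs to $[I]$; (ii) such functions are dense in $P$ in the Bergman norm. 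Since $[I]$ is closed, (i) and (ii) together give $P\subseteq[I]$.

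For step (i), let $g$ be holomorphic on a neighborhood of $\clb$ with $g|_{\tilde M}=0$. Because $I$ is radical, the analytic Nullstellensatz shows that $I$ generates the ideal sheaf $\mathcal{I}_{\tilde M}$ of $\tilde M=Z(I)$ on all of $\mathbb{C}^n$ — that is, $\mathcal{I}_{\tilde M,p}=I\mathcal{O}_p$ at every point $p$, the point being that $I\mathcal{O}_p$ remains radical. Fix polynomial generators $g_1,\dots,g_k$ of $I$ and let $\Omega$ be an open ball with $\clb\subset\Omega$; then $\Omega$ is Stein, the sheaf map $\mathcal{O}^k\to\mathcal{I}_{\tilde M}$, $(h_l)\mapsto\sum_l h_lg_l$, is surjective with coherent kernel, so by Cartan's Theorem B it is surjective on sections over $\Omega$, giving $g=\sum_{l=1}^k h_lg_l$ with $h_l\in\mathcal{O}(\Omega)$. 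As $\clb$ is polynomially convex, Oka--Weil lets us approximate each $h_l$ uniformly on $\clb$ by polynomials $p_l$; then $\sum_l p_lg_l\in I$ converges to $g$ uniformly on $\clb$, hence in $\ber$, so $g\in[I]$.

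Step (ii) is the analytic heart and, I expect, the main obstacle. Given $f\in P$, the dilates $f_\rho(z)=f(\rho z)$ converge to $f$ in $\ber$ as $\rho\to1^-$ and are holomorphic past $\clb$, but $f_\rho$ vanishes on $\rho^{-1}\tilde M$ rather than on $\tilde M$, so one must correct this defect on a neighborhood of $\clb$. I would do this locally-to-globally: near each of the finitely many singular points of $\tilde M$ inside $\bn$ (finiteness follows from the hypotheses together with \cite[p.\,31]{Complex analytic sets}) use radicality to write $f_\rho$ as a holomorphic combination of the $g_l$ up to a small error; near $\pbn$ use transversality and the absence of boundary singularities to pass to holomorphic coordinates flattening $\tilde M$ and express $f_\rho$ in the ideal of the transverse coordinates there; glue the local data by a partition of unity to get a smooth combination $\sum_l g_l\Psi_l$ agreeing with $f_\rho$ up to a controllably small term; and solve the resulting $\bar\partial$-equation with Hörmander-type weighted $L^2$ estimates, the weight chosen to see both $\tilde M$ and the sphere, to turn $\sum_l g_l\Psi_l$ into an honest holomorphic function near $\clb$ vanishing on $\tilde M$ that is $\varepsilon$-close to $f_\rho$, hence to $f$, in $\ber$. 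This is the Beatrous-type extension/correction mechanism underlying Section \ref{manifoldsec} (cf. \cite{Beatrous} and \cite{DYT}), adapted to radical rather than complete-intersection ideals; transversality is essential precisely because it makes the weighted $L^2$ estimates near $\pbn$ uniform, while smoothness on $\pbn$ confines the singular set to a compact subset of $\bn$. Step (i), by contrast, is essentially formal once one knows a radical ideal generates the ideal sheaf of its zero locus.
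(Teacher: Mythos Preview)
Your reduction is right and matches the paper's shape: the last two conclusions follow from $[I]=P$ via Theorem~\ref{2d ess nor} and Corollary~\ref{toe}, the inclusion $[I]\subseteq P$ is trivial, and step~(i) (holomorphic functions on a neighborhood of $\clb$ vanishing on $\tilde M$ lie in $[I]$, via radicality, Cartan~B on a Stein ball, and Oka--Weil) is correct and is essentially the algebraic input that the appendix of \cite{DYT} also uses.

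Where you diverge from the paper is step~(ii). The paper does \emph{not} invoke $\bar\partial$ or H\"ormander estimates. Instead it goes back into the proof of Theorem~\ref{manifold} and tracks the constants to show that for all $t$ in a small interval $[1,t_0]$ the extension operators
\[
E_t:\mathcal P_t\longrightarrow Q_t\subset L_a^2(t\bn)
\]
associated to the dilated varieties $M^t=\tilde M\cap t\bn$ are \emph{uniformly} bounded. The argument is operator-theoretic: one shows that a single choice of $R,s,s'$ works simultaneously for all nearby $t$, that the finite-codimension subspace $L_1\subset\mathcal P_1$ on which the key inequality holds restricts well to each $\mathcal P_t$, and that the angle between a fixed polynomial complement $\mathcal N$ and the spaces $\mathcal L_t=\tilde T_{\mu_s^t}L_t$ stays bounded away from $0$ (the Claim in Section~\ref{arvsec}). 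Once this uniform bound is in hand, the paper simply says the rest is ``exactly the same as in the appendix of \cite{DYT}'': one dilates $f\in P$, subtracts the extension $E_t$ of its restriction to $M^t$ to produce a function holomorphic past $\clb$ and vanishing on $\tilde M$, and the uniform bound on $E_t$ is exactly what forces this correction to tend to $0$ in $\ber$.

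Your $\bar\partial$ sketch is aiming at the same ``dilate and correct'' endpoint, but by a different mechanism, and it is missing precisely the quantitative step the paper works hard to supply. You assert the glued correction is ``controllably small'' and that transversality ``makes the weighted $L^2$ estimates near $\pbn$ uniform'', but you do not say why the $L^2$-norm of the correction tends to $0$ as $\rho\to1^-$; note also that near $\pbn$ the dilate $f_\rho$ vanishes on $\rho^{-1}\tilde M$, not on $\tilde M$, so ``expressing $f_\rho$ in the ideal of the transverse coordinates'' already carries a remainder whose size must be controlled uniformly in $\rho$. That uniformity is the entire content of the paper's Section~\ref{arvsec}, obtained through the extension-operator machinery rather than through $\bar\partial$. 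Your route may be completable, but as written it has a gap exactly where the paper's argument does its real work.
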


To prove the above theorem, we will show that the extension operators corresponding to the subvariety $1/t\tilde{M}$ for $t$ in a small interval $[1,t_0]$ are uniformly bounded.

In general, suppose $\tilde{M}$ satisfies the assumptions of Theorem \ref{manifold}. Then for $t$ close enough to $1$,  $1/t\tilde{M}$ also satisfies the hypotheses of Theorem \ref{manifold}. Equivalently, there is an extension operator from $M^t$ to $t\bn$. In the proof of Theorem \ref{manifold}, the number $\epsilon$ depends on $\tilde{M}$, $R$ and $s$ depend on $\epsilon$ and $\tilde{M}$ and $s'$ depends on $s$ and $R$. By taking a larger $R$, we can find $0<s<s'<1$ and $t_1>1$ such that $\forall 1\leq t\leq t_1$, the measure
$$
d\mu_s^t=\sum_{i=1}^m(t^2-|z_i|^2)^{n+1}\delta_{z_i}+(t^2-|w|^2)^{n-d}dv_d|_{M^t}
$$
is suitable for the proof of Theorem \ref{manifold}. That means, suppose $f\in L_a^2(t\bn)$, $$\int_{M_s^{s'}}|f|^2d\mu_s^t\leq1/2\int_M|f|^2d\mu_s^t,$$
then
$$\langle T_{\mu_s^t}^3f,f\rangle\geq\alpha\langle T_{\mu_s^t}f,f\rangle$$
for some $\alpha>0$ that doesn't depend on $t$.
We remind the reader that there is in fact a normalizing constant between the spaces corresponding $1/t\tilde{M}$ and $\tilde{M}$, but since it tends to $1$ as $t$ tends to $1$, we omit the difference.

Let $\mathcal{P}_t$, $Q_t$, $T_{\mu_s^t}$ and $R_t$ be the obvious ones. Then $\mathcal{P}_t=Range R_t$. Define
$$
E_t: \mathcal{P}_t\to Q_t, f|_{M^t}\mapsto f\in Q_t,
$$
$$
\tilde{T}_{\mu_s^t}:\mathcal{P}_t\to\mathcal{P}_t, f\mapsto R_tT_{\mu_s^t}E_tf.
$$
Then $\tilde{T}_{\mu_s^t}$ are uniformly bounded for all $t$.
Let $L_1\subset\mathcal{P}_1$ be the finite co-dimensional subspace that $\forall f\in L_1$,
$$
\int_{M_s^{s'}}|f|^2d\mu_s^1\leq1/3\int_M^1|f|^2d\mu_s^1.
$$
Then $\forall f\in R_1^{-1}L_1\subset Q_1$,
$$
\langle T_{\mu_s^1}^3f,f\rangle\geq\alpha\langle T_{\mu_s^1}f,f\rangle\geq\frac{\alpha}{\|T_{\mu_s^1}\|}\|T_{\mu_s^1}f\|^2.
$$
So
$$
\int_{M^1}|T_{\mu_s^1}f|^2d\mu_s^1\geq c\|T_{\mu_s^1}f\|^2.
$$
This means for every function $g\in\mathcal{L}_1:=\tilde{T}_{\mu_s^1}L_1$,
$$
\|E_1g\|\leq C\|g\|_{\mu_s^1}.
$$
There is a natural inclusion $\mathcal{P}_t\subset\mathcal{P}_{t'}, t'\leq t$, given by restriction. In particular, $\mathcal{P}_t\subset\mathcal{P}_1$, $\forall t\geq1$. Let $L_t=L_1\cap\mathcal{P}_t$. Then for $t$ close enough to $1$, $\forall f\in L_t$,
$$
\int_{M_s^{s'}}|f|^2d\mu_s^t\leq1/2\int_{M^t}|f|^2d\mu_s^t.
$$
So $\forall g\in\mathcal{L}_t:=\tilde{T}_{\mu_s^t}L_t$,
$$
\|E_tg\|\leq C\|g\|_{\mu_s^t}.
$$
The spaces $\mathcal{L}_t\subset\mathcal{P}_t$ have the same co-dimension: since the polynomials are dense, we can find finite dimensional space $N$ consisting of(restriction of) polynomials such that $N+L_1=\mathcal{P}_1$, $N\cap L_1=\{0\}$. Then it's easy to prove that $N+L_t=\mathcal{P}_t$, $N\cap L_t=\emptyset$. Since the operators $\tilde{T}_{\mu_s^t}$ are one to one and surjective, codim$\mathcal{L}_t=$ codim$L_t=\dim N$.

We denote the norms $\|\cdot\|_{\mu_s^t}$ by $\|\cdot\|_t$ from this point.
Let $\eta>0$ be determined later. We can take finite dimensional space $\mathcal{N}$ consisting of polynomials such that $\mathcal{N}+\mathcal{L}_1=\mathcal{P}_1$ and $\forall f\in\mathcal{N}, \forall g\in\mathcal{L}_1$,
$$
\frac{|\langle f,g\rangle_1|}{\|f\|_1\|g\|_1}<\eta.
$$

\textbf{Claim}: $\exists t_0>1$, such that $\forall 1\leq t\leq t_0$, $\forall f\in\mathcal{N}$, $\forall g\in\mathcal{L}_t$,
$$
\frac{|\langle f,g\rangle_t|}{\|f\|_t\|g\|_t}<1/2.
$$
Suppose the claim is not true, then there exists a sequence $t_n\to1$, $f_n\in\mathcal{N}, g_n\in\mathcal{L}_{t_n}$, $\|f_n\|_{t_n}=\|g_n\|_{t_n}=1$, such that
$$
\langle f_n,g_n\rangle_{t_n}\geq1/2.
$$
Then $g_n=\tilde{T}_{\mu_s^{t_n}}h_n$, $h_n\in L_{t_n}$, by previous discussion,
$$
\|h_n\|_{t_n}\leq C\|g_n\|_{t_n}=C
$$
for some $C>0$. It's easy to show that the norms $\|\cdot\|_t$ are uniformly equivalent on the space $\mathcal{N}$. So $f_n$(has a subsequence) tends to some $f\in\mathcal{N}$ uniformly on all $\|\cdot\|_{t_n}$ norms. Hence for sufficiently large $n$ we have
$$
|\langle f,g_n\rangle_{t_n}|\geq1/3.
$$
We also have $\|f\|_1=1$. So
$$
|\langle \tilde{T}_{\mu_s^{t_n}}f,h_n\rangle_{t_n}|=|\langle f,g_n\rangle_{t_n}|\geq1/3.
$$
Finally, we prove that
$$
|\langle\tilde{T}_{\mu_s^{t_n}}f,h_n\rangle_{t_n}-\langle\tilde{T}_{\mu_s^1}f,h_n\rangle_1|\to0, n\to\infty.
$$
Since
$$|\langle\tilde{T}_{\mu_s^1}f,h_n\rangle_1|=|\langle f,\tilde{T}_{\mu_s^1}h_n\rangle_1|<\eta\|f\|_1\|\tilde{T}_{\mu_s^1}h_n\|_1\leq C\eta,$$
Take $\eta$ such that $C\eta\leq1/4$, this proves the claim
\begin{eqnarray*}
|\tilde{T}_{\mu_s^{t_n}}f(z)|&\leq&|\int_{M_s^{t_n}}f(w)\frac{(t_n^2-|w|^2)^{n-d}}{(t_n^2-\langle z,w\rangle)^{n+1}}dv_d(w)|\\
& &+|\sum_{i=1}^mf(z_i)\frac{(t_n^2-|z_i|^2)^{n+1}}{(t_n^2-\langle z,z_i\rangle)^{n+1}}|\\
&\leq&C\int_{M_s^{t_n}}\frac{(t_n^2-|w|^2)^{n-d}}{|t_n^2-\langle z,w\rangle|^{n+1}}dv_d(w)+C\\
&\leq&C+C\int_{M_s^{t_n}\cap B(z,\delta)}\frac{(t_n^2-|w|^2)^{n-d}}{|t_n^2-\langle z,w\rangle|^{n+1}}dv_d(w)\\
&\leq&C+C\int_{t_n\mathbb{B}_d}\frac{(t_n^2-|w'|^2)^{n-d}}{|t_n^2-\langle z',w'\rangle|^{n+1}}dv(w')\\
&\leq&C+C\log\frac{1}{1-|z'/t_n|^2}\\
&\leq&C+C\log\frac{1}{t_n^2-|z|^2}.
\end{eqnarray*}
The above estimation is similar with those used in the last section, we omit the details. The third inequality from the bottom is from Lemma \ref{1.4.10 Rudin}.
\begin{eqnarray*}
&&\langle\tilde{T}_{\mu_s^{t_n}}f,h_n\rangle_{t_n}-\langle\tilde{T}_{\mu_s^1}f,h_n\rangle_1\\
&=&\int_{M^{t_n}}\tilde{T}_{\mu_s^{t_n}}f(z)\overline{h_n(z)}d\mu_s^{t_n}(z)-\int_{M^1}\tilde{T}_{\mu_s^1}f(z)\overline{h_n(z)}d\mu_s^1(z)\\
&=&\int_{M_1^{t_n}}\tilde{T}_{\mu_s^{t_n}}f(z)\overline{h_n(z)}d\mu_s^{t_n}(z)\\
& &+\int_{M_s^1}\bigg(\tilde{T}_{\mu_s^{t_n}}f(z)-\tilde{T}_{\mu_s^1}f(z)\frac{(1-|z|^2)^{n-d}}{(t_n^2-|z|^2)^{n-d}}\bigg)\overline{h_n(z)}d\mu_s^{t_n}(z)\\
& &+\sum_{i=1}^m\bigg(\tilde{T}_{\mu_s^{t_n}}f(z_i)(t_n^2-|z_i|^2)^{n+1}-\tilde{T}_{\mu_s^1}f(z_i)(1-|z_i|^2)^{n+1}\bigg)\overline{h_n(z_i)}\\
&=&I_n+II_n+III_n
\end{eqnarray*}
Clearly $III_n\to0, n\to\infty$. For the first part,
\begin{eqnarray*}
&&|\int_{M_1^{t_n}}\tilde{T}_{\mu_s^{t_n}}f(z)\overline{h_n(z)}d\mu_s^{t_n}(z)|\\
&\leq&\bigg(\int_{M_1^{t_n}}|\tilde{T}_{\mu_s^{t_n}}f(z)|^2d\mu_s^{t_n}\bigg)^{1/2}\|h_n\|_{t_n}\\
&\leq&C\bigg(\int_{M_1^{t_n}}(C+C\log\frac{1}{t_n^2-|z|^2})^2(t_n^2-|z|^2)^{n-d}dv_d(z)\bigg)^{1/2}\\
&\leq&Cv_d(M_1^{t_n})\to0.
\end{eqnarray*}
For the second part, since
\begin{eqnarray*}
&&|\tilde{T}_{\mu_s^{t_n}}f(z)-\tilde{T}_{\mu_s^1}f(z)\frac{(1-|z|^2)^{n-d}}{(t_n^2-|z|^2)^{n-d}}|^2(t_n^2-|z|^2)^{n-d}\\
&\leq&2(C+C\log\frac{1}{t_n^2-|z|^2})^2(t_n^2-|z|^2)^{n-d}\\
& &+2(C+C\log\frac{1}{1-|z|^2})^2(1-|z|^2)^{n-d}\frac{(1-|z|^2)^{n-d}}{(t_n^2-|z|^2)^{n-d}}\\
&\leq&C
\end{eqnarray*}
By the dominance convergence theorem,
$$
\int_{M_s^1}|\tilde{T}_{\mu_s^{t_n}}f(z)-\tilde{T}_{\mu_s^1}f(z)\frac{(1-|z|^2)^{n-d}}{(t_n^2-|z|^2)^{n-d}}|^2(t_n^2-|z|^2)^{n-d}dv_d(z)\to0.
$$
Using the holder's inequality, we see that $II_n\to0$. So the proof of claim is complete.

\begin{proof}[\textbf{proof of Theorem \ref{arv}}]
For $\forall f\in\mathcal{N}, \forall g\in\mathcal{L}_t$,
\begin{eqnarray*}
&&\|f+g\|_t^2=\|f\|_t^2+\|g\|_t^2+2Re\langle f,g\rangle_t\\
&\geq&\|f\|_t^2+\|g\|_t^2-\|f\|_t\|g\|_t\geq1/2(\|f\|_t^2+\|g\|_t^2).
\end{eqnarray*}
By continuity, there is a constant $C'>0$ such that $\forall 1\leq t\leq t_0$, $\forall f\in\mathcal{N}$, $\|E_tf\|\leq C'\|f\|_t$. So for any $h\in\mathcal{P}_t$, $h=f+g$, $f\in\mathcal{N}$, $g\in\mathcal{L}_t$,
$$
\|E_t(f+g)\|^2\leq\|E_tf\|^2+\|E_tg\|^2\leq C(\|f\|_t^2+\|g\|_t^2)\leq2C\|f+g\|_t^2.
$$
So the extension operators $E_t$ are uniformly bounded.

Knowing the above result, the proof of Theorem \ref{arv} is exactly the same as in the appendix of \cite{DYT}.
\end{proof}
\section{Summary}
This paper combines ideas from various subject to solve the Geometric Arveson-Douglas Conjecture. First, we view the quotient module as a reproducing kernel Hilbert module on the variety(cf. \cite{Aronszajn}) and seek an equivalent norm given by a measure. The fact that operators of the form $T_{\mu}$ are in the Toeplitz algebra(cf. \cite{Suarez07}) is crucial to our proof. Second, we use the idea that Toeplitz operators are ``localized'' (cf. \cite{localized operator}\cite{Suarez07}) and apply it on the variety, instead of the whole unit ball. Finally, we observe that the fact that the size of the hyperbolic balls tend to $0$ uniformly as the centers tend to the boundary forces the Bergman reproducing kernels $K_z(w)$ to act ``almost'' like reproducing kernels on the quotient space. Using Theorem 4.7 in \cite{Suarez Wick}, we see that these kind of argument also work for the weighted Bergman spaces.

The techniques from complex harmonic analysis reveals some connection between the Geometric Arveson-Douglas Conjecture and the extension of holomorphic functions(cf. \cite{Beatrous}). This idea first appeared in \cite{DYT} and has inspired us to seek connections from different angles.

Under the hypotheses of this paper, quotient modules are closely related to weighted Bergman spaces on the variety. Therefore whatever is true for the Bergman space may also be true on the quotient space.  Moreover, the idea of looking at the operator $T_{\mu}$ ``locally'' on $M$ offers a way to study the Geometric Arveson-Douglas Conjecture for more general varieties.  These relationships will be the future focuses of our research.

Another direction we plan to consider is extending the index result in \cite{DYT}. Recall in \cite{DYT}, a generalization of the Boutet de Monvel result is obtained using the methods in \cite{BDT}. It seems likely we can extend the proofs to cover our case.

We would like to thank Xiang Tang for discussing with us since the early stage of our research, for reading the drafts of this paper and for the valuable suggestions he gave us. We also would like to thank Kai Wang for the valuable discussions over Wechat. The second author would like to thank Kunyu Guo, her advisor in Fudan University, for inviting her to the world of mathematical research and the advice he gave over emails. She also want to thank her fellows in Fudan University, especially Zipeng Wang, for drawing her attention to complex harmonic analysis before her visit to Texas A\&M University.

Ronald G.~Douglas

Texas A\&M University, College Station, TX, 77843

E-mail address: rdouglas@math.tamu.edu\\

Yi Wang

Fudan University, Shanghai, China, 200433

E-mail address: yiwangfdu@gmail.com
\end{document}